\newcommand{\field}[1]{\mathbb{#1}}
\newcommand{\C}{\field{C}}
\newcommand{\R}{\field{R}}
\newtheorem{defi}{Definition}[section]
\newtheorem{lem}[defi]{Lemma}
\newtheorem{theo}[defi]{Theorem}
\newtheorem{co}[defi]{Corollary}
\newtheorem{pr}[defi]{Proposition}
\newtheorem{re}[defi]{Remark}
\newtheorem{ex}[defi]{Example}
\font\tenmsy=msbm10
\def\Bbb#1{\hbox{\tenmsy#1}} 
\title[On the extension of bi-Lipschitz mappings ]{On the extension of bi-Lipschitz mappings  }
\author{Lev Birbrair}
\address[Lev Birbrair]{ Departamento de Matem\'atica\\ Universidade Federal do Cear\'a, Av.
Humberto Monte, s/n Campus do Pici  60440-900\\ Brasil.}
\email{lev.birbrair@gmail.com}
\author{Alexandre Fernandes}
\address[Alexandre Fernandes]{Departamento de Matem\'atica\\ Universidade Federal do Cear\'a, Av.
Humberto Monte, s/n Campus do Pici  60440-900\\ Brasil.}
\email{alex@mat.ufc.br}
\author{Zbigniew Jelonek}
\address[Zbigniew  Jelonek]{Instytut Matematyczny\\
Polska Akademia Nauk\\
\'Sniadeckich 8, 00-656 Warszawa\\
Poland }
\email{najelone@cyf-kr.edu.pl}
\keywords{ real semialgebraic  set, 
bi-Lipschitz embedding, bi-Lipschitz homeomorphism of $\R^n$}
\subjclass{14 R 10, 14 P 15, 14 P 10}
\thanks{Alexander Fernandes and Zbigniew Jelonek are partially supported by the grant of Narodowe Centrum Nauki number 2019/33/B/ST1/00755}
\date{}
\begin{document}


\begin{abstract}

Let $X$ be a closed semialgebraic set of dimension $k.$ 
If $n\ge 2k+1$, then   there is a bi-Lipschitz and semialgebraic  embedding of $X$ into $\R^n.$
Moreover, if   $n \ge 2k+2$, then this embedding is unique (up to a bi-Lipschitz and semialgebraic homeomorphism of $\R^n$). 
We also give  local counterparts of these results.
\end{abstract}

\bibliographystyle{alpha}

\maketitle

\section{Introduction}
There are three equivalence relations in Lipschitz Geometry of singular
sets. A semialgebraic (or algebraic) subset can be equipped with inner
metric (or in other words, the length metric) defined as minimal length of
a path, connecting two points, or with outer metric - where the distance is
defined as the distance in the ambient Euclidean space. One can define the Lipschitz
Inner equivalence, Lipschitz Outer equivalence and Lipschitz Ambient
equivalence. Two sets are called Lischitz Inner equivalent, if there exists
a bi-Lipschitz map between the two sets with respect to the Inner Metric.
The sets are called Lipschitz Outer equivalent if there exists a
bi-Lipschitz map with respect to the outer metric. Finally, the sets are
called Ambient Lipschitz equivalent if there exists a bi-Lipschitz map with
respect to the outer metric, that can be extended as a bi-Lipschitz map to the ambient space.
The equivalence relations defined above are different. The
relation between the inner and the outer metrics are very well investigated
and the theory of LNE (Lipschitz Normal embedding) is on the rapid
development (see, for example  the works of   Misef and Pichon \cite{PM} and
Kerner, Pedersen and Ruas \cite{KPR}). Our work is devoted to the relations
between the outer and ambient equivalence relations. There are many
examples of families of outer bi-Lipschitz equivalent semialgebraic sets,
such that on any pair of sets , belonging to the same family is not ambient
topological equivalent. One of the examples of such family is the knot
theory. The examples of Neumann and
Pichon show that even in the case of germs of complex surfaces the outer
Lipschitz equivalence
 does not imply the ambient topological equivalence \cite{NP}. That is why it
makes sense to study the relation of outer and ambient equivalence, only in
the case, when one has no topological obstructions. One of very basic
examples in the paper show that two compact curves in the plane can be Outer
Lipschitz equivalent, ambient topologically equivalent, but not Ambient
Lipschitz equivalent. The examples of Birbrair and Gabrielov show that for
germs of real surfaces the outer Lipschitz equivalence, considered together
with ambient topological equivalence, do not imply ambient Lipschitz
equivalences \cite{lev}.

The paper is devoted to the following questions.

Section 3. Consider a semialgebraic set. How one can estimate a dimension
of the ambient space , where the set can be bi-Lipschitzly embedded.
We prove :

\vspace{3mm}
\noindent 

{\bf Theorem 3.3.}
{\it \ Let $X$ be a closed semialgebraic   set  of dimension $k.$
Then there is a bi-Lipschitz and semi-algebraic embedding of $X$ into $\R^{2k+1}.$}

\vspace{3mm}

We give also a complex version of this theorem (see Theorem \ref{zan1}). 
Another result of this type is:

\vspace{3mm}
\noindent 

{\bf Theorem 3.5.}{\it 
\ Any compact semialgebraic set $X$  of dimension  $k$ is bi-Lipschitz equivalent, with respect to the inner metric, to a  Lipschitz normally embedded set  $Y\subset\R^{2k+1}.$}

\vspace{3mm}

Section 4 is devoted to the question of the uniqueness of the bi-Lipschitz
embedding. This question can be also reformulated as follows : when the
Outer Lipschitz equivalence implies the ambient Lipschitz equivalence.
In general two such embeddings into $\R^{2k+1}$ are not equivalent, i.e., this embedding is not unique.
We say that a semialgebraic set $X$ admits a unique embedding to $\R^n$, if for any two bi-Lipschitz embeddings
$f,g: X\to\R^n$, where $f(X), g(X)$ are semialgebraic sets,  there is a bi-Lipschitz
homeomorphism $\Phi$ of $\R^n$, such that $g=\Phi \circ f.$ In other words the subset $X\subset \R^n$ has a unique embedding into $\R^n$, if any other semialgebraic subset $Y\subset \R^n$, which is equivalent to $X$ with
respect to the outer metric, is Ambient Lipschitz equivalent to $X.$ 
In the papers \cite{jel}, \cite{top}  the third named author
 considered a problem of existence of unique
embedding for the case of smooth varieties.
Here, we  consider the same problem in the category  of
real semialgebraic sets with Lipschitz  mappings
as morphisms. We say that a bi-Lipschitz homeomorphism is said to be {\it tame} if it is a
composition of bi-Lipschitz homeomorphisms of the form $$\Phi :\R^n\ni
(x_1,...,x_n)\to (x_1,..., x_{n-1}, x_n+p(x_1,..., x_{n-1}))\in
\R^n$$  and linear mappings with determinant one. Note that such a mapping has almost everywhere jacobian equal to one, hence 
it preserves the $n-$dimensional volume of $\R^n.$
The main result of this section is:

\vspace{3mm}
\noindent 

{\bf Theorem 4.5.}{\it\
Let $X$ be a  closed semialgebraic subset of $\R^n$ of dimension
$k.$ Let $f : X \to \R^n$ be a semialgebraic and bi-Lipschitz embedding. If $n\ge 2k+2,$ then
there exists a tame bi-Lipchitz and semialgebraic homeomorphism $F : \R^n\to\R^n$ such that
$$F|_X = f.$$ Moreover, there is a continuos and semialgebraic  family of tame bi-Lipchitz and semialgebraic homeomorphisms 
$F_t: \R^n\times\R\to\R^n,$ such that $F_0=identity \ and \ F_1|_X = f.$}

\vspace{3mm}
\noindent 

{\bf Corollary 4.6.}{\it \
Let $X$ be a closed semialgebraic  set  of dimension $k.$ If $n\ge
2k+2$, then $X$ has a  unique semialgebraic and bi-Lipchitz embedding into $\R^n$ (up to a semialgebraic and bi-Lipschitz tame homeomorphism of $\R^n$).}

\vspace{3mm}

If we do not assume that the mapping $f$ is semi-algebraic our result is slightly worse (see Theorem \ref{glowne01}).

\vspace{3mm}

Section 5 is devoted to the local case. We show that similar method can be applied in the local case to give slightly better local results. 
The analog of Theorem
3.3 is the following:

\vspace{3mm}
\noindent 

{\bf Theorem 5.4.}{\it \
Let $(X,0)$ be a germ of closed semialgebraic   set  of dimension $k.$
Then there is a bi-Lipschitz and semi-algebraic embedding of $(X,0)$ into $(\R^{2k},0).$}

\vspace{3mm}

We give also a complex version of this theorem (see Theorem \ref{zan3}).
Moreover we have:

\vspace{3mm}
\noindent 

{\bf Theorem 5.9.}{\it \
Let $(X,0)$ be a  closed semialgebraic subset of $(\R^n,0)$ of dimension
$k.$ Let $f : (X,0) \to (\R^n,0)$ be a semialgebraic and bi-Lipschitz embedding. If $n\ge 2k+1,$ then
there exists a germ of tame bi-Lipchitz and semialgebraic homeomorphism $F : (\R^n,0)\to(\R^n,0)$ such that
$$F|_X = f.$$ Moreover, there is a continuos and semialgebraic  family of germs of tame bi-Lipchitz and semialgebraic homeomorphisms 
$F_t: (\R^n,0)\times\R\to(\R^n,0),$ such that $F_0=identity \ and \ F_1|_X = f.$}

\vspace{3mm}
\noindent 

{\bf Corollary 5.10.}{\it \
Let $(X,0)$ be a closed semialgebraic  set  of dimension $k.$ If $n\ge
2k+1$, then $X$ has a  unique semialgebraic and bi-Lipchitz embedding into $(\R^n,0)$ (up to a germ of semialgebraic and bi-Lipschitz tame homeomorphism of $(\R^n,0)$).}

\vspace{3mm}

As before, if we do not assume that $f$ is semi-algebraic then we obtain a slightly worse result (see Theorem \ref{zan2}). 
Finally, we show that for germs of curves in $\R^2$ we have a following weak uniqueness  of embedding:

\vspace{3mm}
\noindent 

{\bf Corollary 5.18.}{\it \
Let $(X,0)$ and $(Y,0)$ be germs of 1-dimensional semialgebraic subsets of $\R^2$. If  $(X,0)$ and $(Y,0)$ are outer bi-Lipschitz equivalent, then they are ambient bi-Lipschitz equivalent.}

\vspace{5mm}

However for surfaces the situation is more complicated. There are germ of surfaces $(X,0)$ and $(Y,0)$ in $\R^n$ (where $n=3$ or $n=4$)
and a semialgebraic bi-Lipschitz mapping $f:(X,0)\to (Y,0)$ which can be extended to homeomorphism of $(\R^n,0)$ but it can not be extended to a bi-Lipschitz  homeomorphism of $(\R^n,0)$ (see \cite{lev}).

\section{Preliminaries}
In this paper we consider the Lipschitz category $\mathcal L.$ Its objects are
closed subsets of $\R^n$ (mainly semialgebraic) equipped with the outer metric
and morphisms are  Lipschitz mappings with respect to the outer metric.
We also consider the Lipchitz semialgebraic category $\mathcal LS$. Its objects are
closed semialgebraic subsets of $\R^n$ equipped with the outer metric
and morphisms are  Lipschitz and semialgebraic  mappings with respect to the outer metric.
We start with the following basic definition which is valid for both categories (for the category $\mathcal LS$ we use brackets):

\begin{defi}\label{embedding}
Let $X$ be closed (semialgebraic) subset of $\R^n$ and let $f:X\to \R^n$ be a
Lipschitz mapping. We say that $f$ is a  bi-Lipschitz (semialgebraic) embedding if

1) $f(X)=Y$ is a  closed subset of $\R^n$,

2) the mapping $f : X\to Y$ is a bi-Lipschitz (semialgebraic) isomorphism, i.e., $f$
and $f^{-1}$ are Lipschitz (semialgebraic) mappings.
\end{defi}

\noindent Let $X$ be a closed subset of $\R^n$. We will denote by
${\bf V}(X)$ the vector space of all Lipschitz  functions on $X$. If $f :
X\to Y$ is a Lipschitz mapping of closed sets, then we have
the natural homomorphism ${\bf V}(f) : {\bf V}(Y)\ni h\to h\circ
f\in {\bf V}(X).$ 

If $X$ is a closed semialgebraic subset of $\R^n$, then by ${\bf VS}(X)$ we will denote
the vector space of all Lipschitz and semialgebraic   functions on $X$.

Since every Lipschitz function
on $X$ is the restriction of some Lipschitz function on $\R^n$ (see \cite{K}, \cite{Mc}), 
we have that the mapping $\iota^*: {\bf V}(\R^n)\ni h\mapsto h\circ \iota \in {\bf V}(X)$ is an epimorphism,
where the mapping $\iota$ is the inclusion
$\iota: X\to \R^n$. We have the following more general
fact:

\begin{pr}
Let $X\subset \R^m$ be a closed subset and    $f:X\to \R^n$ be a Lipschitz mapping.  The
following conditions are equivalent:

1) f is a bi-Lipschitz embedding,

2) the induced mapping ${\bf V}(f):{\bf V}(\R^n)\to {\bf V}(X)$ is an
epimorphism.
\end{pr}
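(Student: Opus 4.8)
The plan is to prove the two implications separately, using in each direction the McShane--Kirszbraun extension fact recalled just above, namely that every Lipschitz function on a closed subset of $\R^n$ extends to a Lipschitz function on all of $\R^n$.

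For the implication $(1)\Rightarrow(2)$, I would start from an arbitrary $g\in {\bf V}(X)$ and try to realise it as $h\circ f$ for some $h\in {\bf V}(\R^n)$. Since $f$ is a bi-Lipschitz embedding, the image $Y=f(X)$ is closed and the inverse $f^{-1}:Y\to X$ is Lipschitz; hence $g\circ f^{-1}:Y\to\R$ is a Lipschitz function on the closed set $Y$. Applying the extension fact to $Y$ produces $h\in {\bf V}(\R^n)$ with $h|_Y=g\circ f^{-1}$, and then for every $x\in X$ one has $h(f(x))=g(f^{-1}(f(x)))=g(x)$, i.e. ${\bf V}(f)(h)=g$. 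This shows ${\bf V}(f)$ is an epimorphism.

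For the converse $(2)\Rightarrow(1)$, the idea is to apply surjectivity to the coordinate functions in order to build a Lipschitz left inverse. Writing $x_1,\dots,x_m$ for the restrictions to $X$ of the coordinate functions of $\R^m$, each $x_j$ is $1$-Lipschitz, so $x_j\in {\bf V}(X)$; by hypothesis there is $h_j\in {\bf V}(\R^n)$ with $h_j\circ f=x_j$. Setting $H=(h_1,\dots,h_m):\R^n\to\R^m$ gives a Lipschitz map with $H\circ f=\mathrm{id}_X$. From this I read off at once that $f$ is injective, and that $H|_{Y}$ is a Lipschitz inverse of $f:X\to Y$; in particular $f^{-1}$ is Lipschitz. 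Since $f$ is Lipschitz by hypothesis, it only remains to verify that $Y$ is closed.

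I expect the closedness of $Y=f(X)$ to be the one point needing care. Given a sequence $f(x_k)\to y$ in $\R^n$, continuity of $H$ yields $x_k=H(f(x_k))\to H(y)=:x$; since $X$ is closed, $x\in X$, and continuity of $f$ gives $f(x_k)\to f(x)$, whence $y=f(x)\in Y$. Thus $Y$ is closed and $f$ is a bi-Lipschitz embedding. In summary, the main tool in the direction $(1)\Rightarrow(2)$ is the extension theorem for Lipschitz functions, while in $(2)\Rightarrow(1)$ the crux is the coordinate trick producing the global Lipschitz retraction $H$, from which injectivity, the Lipschitz inverse, and the closedness of the image all follow.
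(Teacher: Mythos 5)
Your proposal is correct and follows essentially the same route as the paper: the forward direction uses the Lipschitz extension theorem applied to $g\circ f^{-1}$ on the closed set $Y=f(X)$, and the converse uses the coordinate functions to build the global Lipschitz retraction $H$ with $H\circ f=\mathrm{id}_X$, from which injectivity, the Lipschitz inverse, and closedness of the image follow. Your explicit sequence argument for closedness is just a spelled-out version of the paper's remark that $H\circ f=\mathrm{id}$ makes $f$ proper.
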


\begin{proof}
1) $\implies$ 2) Indeed, let $Y=f(X).$ By definition  $Y$ is a
closed subset  and the mapping
$${\bf V}(f'):  {\bf V}(Y)\ni \alpha\to \alpha\circ f\in
{ {\bf V}}(X)$$ is an isomorphism (here $f'$ denotes the mapping
$f': X\ni x\to f(x)\in Y).$ Now let $\iota :Y\to \R^n$ be the inclusion.
Since every Lipschitz function $\sigma : Y\to \R^n$ can be extended to
a global Lipschitz function $\Sigma : \R^n \to \R$,  the mapping ${\bf 
V}(\iota): {\bf V}(\R^n)\to { {\bf
V}}(Y)$ is an epimorphism. But ${\bf V}(f)={\bf V}(f')\circ {\bf
V}(\iota).$

2) $\implies$ 1)   Let
$x_1,...,x_m$ be coordinates on $\R^m.$ By the assumption we can
find Lipschitz functions $H_i\in {\bf V}(\R^n)$ such that $x_i=H_i\circ
f$ (on $X$). Put $H=(H_1,...,H_m).$ We have $identity=H\circ f$, hence the mapping $f$ is proper and consequently 
the set $f(X)$ is closed.
Moreover, $f^{-1}=H$ on the set $f(X)$, and the mapping $H$ is Lipschitz.   This implies that
 the mapping $f: X\to Y$ is  bi-Lipschitz.
\end{proof}

By \cite{AF} the same result holds in the category $\mathcal LS$ (with the same proof as above):

\begin{pr}\label{semialgebraic}
Let $X\subset \R^m$ be a closed semialgebraic subset and    $f:X\to \R^n$ be a Lipschitz semialgebraic  mapping.  The
following conditions are equivalent:

1) f is a bi-Lipschitz semialgebraic embedding,

2) the induced mapping ${\bf VS}(f):{\bf VS}(\R^n)\to {\bf VS}(X)$ is an
epimorphism.
\end{pr}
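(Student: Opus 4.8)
The plan is to repeat almost verbatim the argument given above for the non-semialgebraic proposition, replacing the vector spaces ${\bf V}(\cdot)$ by their semialgebraic analogues ${\bf VS}(\cdot)$, and replacing the topological Lipschitz extension theorem by its semialgebraic counterpart. The only genuinely new input is the semialgebraic Lipschitz extension result of \cite{AF}: every Lipschitz and semialgebraic function on a closed semialgebraic subset $Y\subset\R^n$ is the restriction of a Lipschitz and semialgebraic function on all of $\R^n$. Granting this, both implications become formal.

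For $1)\Rightarrow 2)$, I would set $Y=f(X)$ and first note that $Y$ is closed semialgebraic: semialgebraicity follows from the Tarski--Seidenberg theorem, since the image of a semialgebraic set under a semialgebraic map is semialgebraic, while closedness is part of the definition of a bi-Lipschitz semialgebraic embedding. Since $f':X\to Y,\ x\mapsto f(x),$ is a semialgebraic bi-Lipschitz isomorphism, the induced map ${\bf VS}(f'):{\bf VS}(Y)\to{\bf VS}(X)$ is an isomorphism. Next, for the inclusion $\iota:Y\to\R^n$, the extension theorem of \cite{AF} guarantees that every element of ${\bf VS}(Y)$ lifts, so ${\bf VS}(\iota):{\bf VS}(\R^n)\to{\bf VS}(Y)$ is an epimorphism. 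As ${\bf VS}(f)={\bf VS}(f')\circ{\bf VS}(\iota)$ is the composition of the epimorphism ${\bf VS}(\iota)$ with the isomorphism ${\bf VS}(f')$, it is an epimorphism.

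For $2)\Rightarrow 1)$, I would mimic the construction in the proof above. Let $x_1,\dots,x_m$ be the coordinate functions on $\R^m$; their restrictions to $X$ are Lipschitz and semialgebraic, so by hypothesis there exist $H_i\in{\bf VS}(\R^n)$ with $x_i=H_i\circ f$ on $X$. Setting $H=(H_1,\dots,H_m)$ yields a Lipschitz semialgebraic map $H:\R^n\to\R^m$ with $H\circ f=\mathrm{identity}$ on $X$. This identity forces $f$ to be proper, whence $f(X)$ is closed, and it shows that $f^{-1}=H$ on $f(X)$ is Lipschitz and semialgebraic; therefore $f:X\to f(X)$ is a semialgebraic bi-Lipschitz isomorphism, i.e.\ a bi-Lipschitz semialgebraic embedding.

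The hard part is not contained in this proof at all but is imported wholesale: the existence of semialgebraic Lipschitz extensions is a nontrivial theorem, since one must produce an extension that is simultaneously Lipschitz and definable, something the naive McShane supremum formula does not obviously deliver. Once \cite{AF} is invoked, no further obstacle remains, as every remaining step is purely formal bookkeeping about compositions of semialgebraic morphisms, identical to the non-semialgebraic case.
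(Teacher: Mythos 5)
Your proposal is correct and matches the paper exactly: the paper gives no separate argument for this proposition, stating only that it ``holds in the category $\mathcal{LS}$ (with the same proof as above)'' by citing \cite{AF} for the semialgebraic Lipschitz extension theorem, which is precisely the substitution you make. Your added remarks (Tarski--Seidenberg for semialgebraicity of the image, and the observation that the McShane formula does not obviously yield a definable extension) are accurate but not part of the paper's treatment.
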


\section{Extension of global bi-Lipschitz embeddings}
\noindent In this section we will prove our  main result. To do
this  we need  a series of  lemmas:

\begin{lem}\label{proj}
Let $X$ be a closed subset of $\R^n.$ Assume that the
projection $\pi : X \ni (x_1,...,x_n)\to
(x_1,...,x_{l},0,...,0)\in \R^{l}\times\{0\}$ is a bi-Lipschitz
embedding. Then, there exists a tame  bi-Lipschitz homeomorphism $\Pi :
\R^n\to \R^n$ such that $\Pi|_X=\pi.$
\end{lem}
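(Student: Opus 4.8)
The plan is to flatten $X$ onto $\R^{l}\times\{0\}$ one coordinate at a time, by induction on the number $n-l$ of coordinates to be eliminated. If $n=l$ then $\pi$ is the identity on $X$ and we may take $\Pi=\mathrm{id}$. For the inductive step, consider the intermediate projection $\pi_{n-1}:\R^n\to\R^{n-1}\times\{0\}$ onto the first $n-1$ coordinates. Since $\pi$ factors as the projection $\R^{n-1}\to\R^{l}$ composed with $\pi_{n-1}$, and dropping coordinates does not increase the Euclidean norm, the bi-Lipschitz estimate for $\pi|_X$ forces $\norme{p-q}\le C\norme{\pi_{n-1}(p)-\pi_{n-1}(q)}$ for $p,q\in X$; as $\pi_{n-1}$ is $1$-Lipschitz and injective on $X$ (injectivity of $\pi|_X$), the restriction $\pi_{n-1}|_X$ is again a bi-Lipschitz embedding.

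First I would use this to reduce the ambient dimension by one. On $X$ the last coordinate is a well-defined function of the first $n-1$, namely $g(\pi_{n-1}(p)):=p_n$, and it is Lipschitz because $(\pi_{n-1}|_X)^{-1}$ is. By the extension theorem quoted above (\cite{K}, \cite{Mc}) extend $g$ to a Lipschitz function $G$ on $\R^{n-1}$ and set
$$\Phi(x_1,\dots,x_n)=\bigl(x_1,\dots,x_{n-1},\,x_n-G(x_1,\dots,x_{n-1})\bigr),$$
a tame shear. Then $\Phi(p)=(\pi_{n-1}(p),0)$ for $p\in X$, so $X':=\Phi(X)$ lies in $\R^{n-1}\times\{0\}$ and $\Phi|_X:X\to X'$ is a bi-Lipschitz homeomorphism leaving the first $l$ coordinates unchanged. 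Consequently the projection of $X'\subset\R^{n-1}$ onto its first $l$ coordinates equals $\pi|_X\circ(\Phi|_X)^{-1}$ and is therefore bi-Lipschitz. By the induction hypothesis there is a tame bi-Lipschitz homeomorphism $\Pi'$ of $\R^{n-1}$ restricting to this projection on $X'$.

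Next I would extend $\Pi'$ to $\R^n$ by letting it act as the identity on $x_n$, obtaining a map $\widetilde{\Pi'}$, and set $\Pi=\widetilde{\Pi'}\circ\Phi$; a direct computation then gives $\Pi(p)=(p_1,\dots,p_l,0,\dots,0)=\pi(p)$ for $p\in X$. The point that needs care — and the main obstacle — is to verify that $\widetilde{\Pi'}$ is still tame, since the tame shears of $\R^n$ by definition alter only the last coordinate, whereas the shears coming from $\R^{n-1}$ alter the coordinate $x_{n-1}$. I would settle this with a sublemma: for every index $i$ and every Lipschitz function $p$, the shear $x\mapsto x+p(x_1,\dots,\widehat{x_i},\dots,x_n)\,e_i$ is tame. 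For $i=n$ this is the definition; for $i<n$ one conjugates a last-coordinate shear by the determinant-one rotation of the $(x_i,x_n)$-plane sending $(a,b)\mapsto(b,-a)$, and a short computation shows that the conjugate is exactly the desired shear in the $i$-th coordinate (with the Lipschitz function suitably relabelled, which preserves its Lipschitz constant).

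Applying this sublemma with $i=n-1$ to each shear generator of $\Pi'$, and observing that a determinant-one linear map of $\R^{n-1}$ extends by the identity on $x_n$ to a determinant-one linear map of $\R^n$, shows that $\widetilde{\Pi'}$ is a composition of tame generators, hence tame. Therefore $\Pi=\widetilde{\Pi'}\circ\Phi$ is tame and bi-Lipschitz and restricts to $\pi$ on $X$, which completes the induction.
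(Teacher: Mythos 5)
Your proof is correct and is essentially the paper's argument: both use the Lipschitz inverse of $\pi|_X$ (equivalently, the Kirszbraun--McShane extension) to write the discarded coordinates as Lipschitz functions of the retained ones and then remove them by tame shears; the paper simply does all $n-l$ coordinates in one step, writing $x_s=p_s(x_1,\dots,x_l)$ for each $s>l$ and composing the corresponding shears, rather than inducting one coordinate at a time. Your sublemma on conjugating a last-coordinate shear into the $i$-th coordinate by a determinant-one rotation is a worthwhile detail that the paper leaves implicit when it asserts its $\Pi$ is tame.
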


\begin{proof}
Let $X':=\pi(X).$ It is a closed  subset of $\R^n.$ Consider
the mapping $\pi : X\to X'\subset \R^n.$ It is a bi-Lipschitz embedding, so
the mapping ${\bf V}(\pi) : {\bf V}(\R^n)\to {\bf V}(X)$ is an
epimorphism. In particular for every $s>l$ there exists a Lipschitz
function $p_s\in {\bf V}(\R^n)$ such that $x_s=p_s(x_1,...,x_l)$
(on $X$). Consider the mapping
$$\Pi (x_1,...,x_n)=(x_1,...,x_l, x_{l+1}-p_{l+1}(x_1,...,x_l),..., x_n-p_n(x_1,...,x_l)).$$
The mapping $\Pi$ is a tame bi-Lipschitz homeomorphism of $\R^n$ and
$$\Pi|_X=\pi.$$\end{proof}

The next Lemma is a  variant of the Whitney Embedding
Theorem:

\begin{lem}\label{whitney}
Let $X$ be a closed semialgebraic set of $\R^n$ of dimension $k.$ If $n>2k+1,$ then there exists a system of coordinates
$(x_1,...,x_n)$ such that the projection
$\pi : X \ni (x_1,...,x_n)\to
(x_1,...,x_{2k+1},0,...,0)\in \R^{2k+1}\times\{0\}$ is  bi-Lipschitz
embedding.
\end{lem}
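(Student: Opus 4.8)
The plan is to prove a Lipschitz analogue of the classical Whitney projection argument: starting from an embedding $X \subset \R^n$ with $n > 2k+1$, I want to find a generic direction $v \in \R^n$ so that the projection $\pi_v$ along $v$ onto a hyperplane remains a bi-Lipschitz embedding on $X$. Iterating this one coordinate at a time reduces the ambient dimension from $n$ down to $2k+1$. By Proposition \ref{semialgebraic}, to check that a semialgebraic Lipschitz map is a bi-Lipschitz embedding it suffices to verify injectivity together with a lower bound of the form $\|\pi_v(x) - \pi_v(y)\| \ge c\,\|x-y\|$ for some constant $c>0$; the Lipschitz upper bound is automatic since a linear projection is $1$-Lipschitz.

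First I would set up the obstruction space. The projection $\pi_v$ fails to be injective exactly when there are distinct points $x,y \in X$ with $x - y$ parallel to $v$; more precisely, the quantitative bi-Lipschitz bound fails when the secant direction $\frac{x-y}{\|x-y\|}$ can come arbitrarily close to $v$. So I would introduce the \emph{secant set} $S(X) = \overline{\left\{ \frac{x-y}{\|x-y\|} : x,y \in X,\ x \ne y \right\}} \subset S^{n-1}$, the closure in the unit sphere of all normalized secant directions. This set is semialgebraic (it is the image of a semialgebraic set under a semialgebraic map, then closed up), and the key dimension count is that $S(X)$ has dimension at most $2k < n-1 = \dim S^{n-1}$. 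Indeed, $S(X)$ is dominated by the semialgebraic set $\{(x,y) : x,y \in X,\ x \ne y\}$ of dimension $2k$, so its closure in the sphere has dimension at most $2k$. Since $2k < n-1$, the complement $S^{n-1} \setminus S(X)$ is nonempty and in fact has full dimension.

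Next I would choose $v \in S^{n-1} \setminus S(X)$, which is possible by the dimension count, and take coordinates so that $v = e_n$. Because $v$ lies outside the \emph{closed} set $S(X)$, there is a positive distance $\delta>0$ from $v$ to $S(X)$; this uniformity is what upgrades mere injectivity to a genuine bi-Lipschitz lower bound. Concretely, for any distinct $x,y \in X$ the normalized secant $u = \frac{x-y}{\|x-y\|}$ satisfies $\|u - v\| \ge \delta$, from which a standard computation shows the projected distance $\|\pi_v(x)-\pi_v(y)\|$ is bounded below by a fixed multiple of $\|x-y\|$ — the component of $x-y$ orthogonal to $v$ cannot be too small relative to $\|x-y\|$. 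Thus $\pi_v$ restricted to $X$ is a bi-Lipschitz embedding onto its image, which is again a closed semialgebraic set of dimension $k$ (closedness following as in the proof of Proposition \ref{semialgebraic}, since the map is proper). Repeating this step reduces the ambient dimension by one each time, and we may continue as long as the target dimension strictly exceeds $2k+1$, terminating exactly at $2k+1$; composing the coordinate changes gives the single system of coordinates asserted in the statement.

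The main obstacle I expect is the \emph{quantitative} secant estimate: I must ensure that the distance bound $\|u-v\|\ge\delta$ translates into a bi-Lipschitz lower bound that is uniform over all pairs $x,y$, including pairs that are very far apart and pairs where $x-y$ is nearly orthogonal to $v$ but $x,y$ are close together. The subtlety is that the secant set must be defined as a \emph{closure} so that avoiding it gives a strictly positive gap $\delta$; if one only avoided the open set of realized secant directions, the lower bound could degenerate along a sequence of secants approaching $v$. Verifying that the closure of the normalized secant set is semialgebraic of dimension at most $2k$, and carefully extracting the constant $c = c(\delta)$ in $\|\pi_v(x)-\pi_v(y)\| \ge c\,\|x-y\|$, is where the real work lies; the rest is bookkeeping and an appeal to Proposition \ref{semialgebraic} together with Lemma \ref{proj} to realize each projection as a tame ambient homeomorphism.
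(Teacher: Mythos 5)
Your proposal is correct and follows essentially the same route as the paper: you form the closure of the set of normalized secant directions (the paper's $\Sigma\subset\pi_\infty\cong\Bbb{RP}^{n-1}$, your $S(X)\subset S^{n-1}$), bound its dimension by $2k<n-1$, pick a direction outside it, and use the positive distance to that closed set to extract a uniform lower bound $\|\pi_v(x)-\pi_v(y)\|\ge c\|x-y\|$, then iterate one coordinate at a time down to $\R^{2k+1}$. The only cosmetic difference is working on the sphere rather than in projective space, and deducing properness from the quantitative bound rather than from $\Sigma$ containing the points at infinity of $X$; both are equivalent.
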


\begin{proof}
Let us denote by $\pi_\infty$ the hyperplane at infinity of
$\R^n.$ Thus $\pi_\infty\cong \Bbb {RP}^{n-1}$ is a 
projective space of dimension $n-1>2k.$ For a non-zero vector
$v\in \R^n$ let $[v]$ denote the corresponding point in $\Bbb
{RP}^{n-1}.$

Let $\Delta =\{ (x,y)\in  {X}\times {X} : x=y\}.$
 Consider a mapping $$A:
{X}\times {X}\setminus \Delta \ni (x,y)\to
[x-y]\in \pi_\infty.$$ 
Since $A$ is semialgebraic mappings and  the  varieties
$X\times X\setminus \Delta$  is of
 dimension at most $2k,$ the set $\Lambda:= A(X\times X \setminus \Delta)$ is a semialgebraic
 subset of $\Bbb {RP}^{n-1}$ of dimension at most $2k.$
This means that also the semialgebraic set $\Sigma=cl(\Lambda)= closure
\ of\ \Lambda$ has  dimension at most $2k.$
 Consequently we have
$\pi_\infty \setminus \Sigma\not=\emptyset.$

Let $P\in \Bbb {RP}^{n-1}\setminus \Sigma$ and let $H\subset
\R^{n}$ be a hyperplane, which is orthogonal to the direction given by $P.$  Since $P\not \in\Lambda$, the projection $S :
{X}\ni x \to Px\cap  H\in  H$ is 
injective.   Moreover, since
$P\not\in\Sigma$ we see that $S$ is also proper (note that
$\Sigma$ contains all points at infinity of ${X}$).
Now consider the angle $a(P,S)$ between the line given by direction $P$ and a line given by a point $S\in \Sigma.$
If $P=(p_1:...:p_n)$ and $S=(s_1:...:s_n)$, then $$sin(a(P,S))=\sqrt{1-\frac{(\sum^n_{i=1} p_is_i)^2}{(\sum^n_{i=1} p_i^2)
(\sum^n_{i=1} s_i^2)}}.$$
The function $\Sigma\ni S\mapsto sin(a(P,S))\in \R_+$ is continuous and non-zero. Since the set $\Sigma$ is compact we have $sin(P,S)\ge \epsilon>0$
for every $S\in\Sigma.$ Now we see that the projection $\pi$ given by $P$ restricted to $X$ is bi-Lipschitz: 
$||x-y||\ge||{\pi}(x)-\pi(y)||\ge \epsilon ||x-y||.$

Now we can apply  induction and we obtain that the mapping $\pi : X \ni (x_1,...,x_n)\to
(x_1,...,x_{2k+1},0,...,0)\in \R^{2k+1}\times\{0\}$ is bi-Lipschitz.
\end{proof}

A direct consequence of Lemma \ref{whitney} is:

\begin{theo}\label{zan}
Let $X$ be a closed semialgebraic   set  of dimension $k.$
Then there is a bi-Lipschitz embedding of $X$ into $\R^{2k+1}.$
\end{theo}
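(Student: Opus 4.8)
The plan is to derive Theorem~\ref{zan} essentially for free from the two preceding lemmas, treating it as a direct corollary as the statement itself announces. The target is a bi-Lipschitz embedding of a closed semialgebraic set $X$ of dimension $k$ into $\R^{2k+1}$. The natural approach is a two-case argument depending on how $X$ is presented. If $X$ already sits inside some $\R^n$ with $n = 2k+1$, there is nothing to prove: the inclusion $\iota : X \to \R^{2k+1}$ is itself a bi-Lipschitz semialgebraic embedding. The substantive case is when $X \subset \R^n$ with $n > 2k+1$, and here the whole job is done by Lemma~\ref{whitney}.

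Concretely, first I would realize $X$ as a closed semialgebraic subset of some $\R^n$; any abstract closed semialgebraic set of dimension $k$ comes with such a presentation, and if $n \le 2k+1$ we may enlarge the ambient space by the trivial inclusion $\R^n \hookrightarrow \R^{2k+1}$ (which is isometric onto its image, hence bi-Lipschitz), reducing to $n \ge 2k+1$. When $n = 2k+1$ we stop. When $n > 2k+1$, I would apply Lemma~\ref{whitney} to obtain a system of coordinates $(x_1, \dots, x_n)$ for which the orthogonal projection
$$\pi : X \ni (x_1, \dots, x_n) \mapsto (x_1, \dots, x_{2k+1}, 0, \dots, 0) \in \R^{2k+1} \times \{0\}$$
is a bi-Lipschitz embedding. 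Composing $\pi$ with the obvious identification $\R^{2k+1} \times \{0\} \cong \R^{2k+1}$ yields the desired bi-Lipschitz embedding of $X$ into $\R^{2k+1}$, and semialgebraicity is preserved throughout since $\pi$ is a linear (hence semialgebraic) map and the change of coordinates is linear.

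I do not anticipate a genuine obstacle here, precisely because the technical heart of the argument has already been extracted into Lemma~\ref{whitney}: that lemma is where the Whitney-type dimension count $n - 1 > 2k$ guarantees a projection direction $P \in \Bbb{RP}^{n-1} \setminus \Sigma$ avoiding the closure of secant directions, and where the compactness-of-$\Sigma$ argument produces the uniform lower bound $\sin(a(P,S)) \ge \epsilon > 0$ controlling the bi-Lipschitz constant. The only point requiring a word of care is the boundary case $n = 2k+1$, where Lemma~\ref{whitney} does not apply (its hypothesis is the strict inequality $n > 2k+1$); this is handled by the trivial observation that the identity embedding already lands in $\R^{2k+1}$. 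Thus the proof amounts to invoking Lemma~\ref{whitney} in the strict case and noting that the conclusion is vacuous otherwise.
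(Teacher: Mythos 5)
Your proposal is correct and matches the paper exactly: the paper offers no separate proof, simply stating that the theorem is ``a direct consequence of Lemma \ref{whitney}'', which is precisely your reduction. Your extra remarks on the trivial cases $n\le 2k+1$ are a harmless (and reasonable) elaboration of the same argument.
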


This result is sharp. In general we cannot embed a semi-algebraic curve in a bi-Lipschitz way into $\R^2$, even if there is no topological obstruction to do it.
Indeed, we have the following example given by Edson Sampaio:

\begin{ex}
{\rm Let $X\subset\R^3$ be the semialgebraic curve given in the picture (a) below. Here the black curve is on the plane, but the red curve is transversal to the plane in two points. Thus $X$ can be embeded topologically into $\R^2$ -see the picture (b), but from  obvious  reasons it can not be embedded into $\R^2$ in a bi-Lipschitz way.}

\begin{center}
\includegraphics[scale=.4]{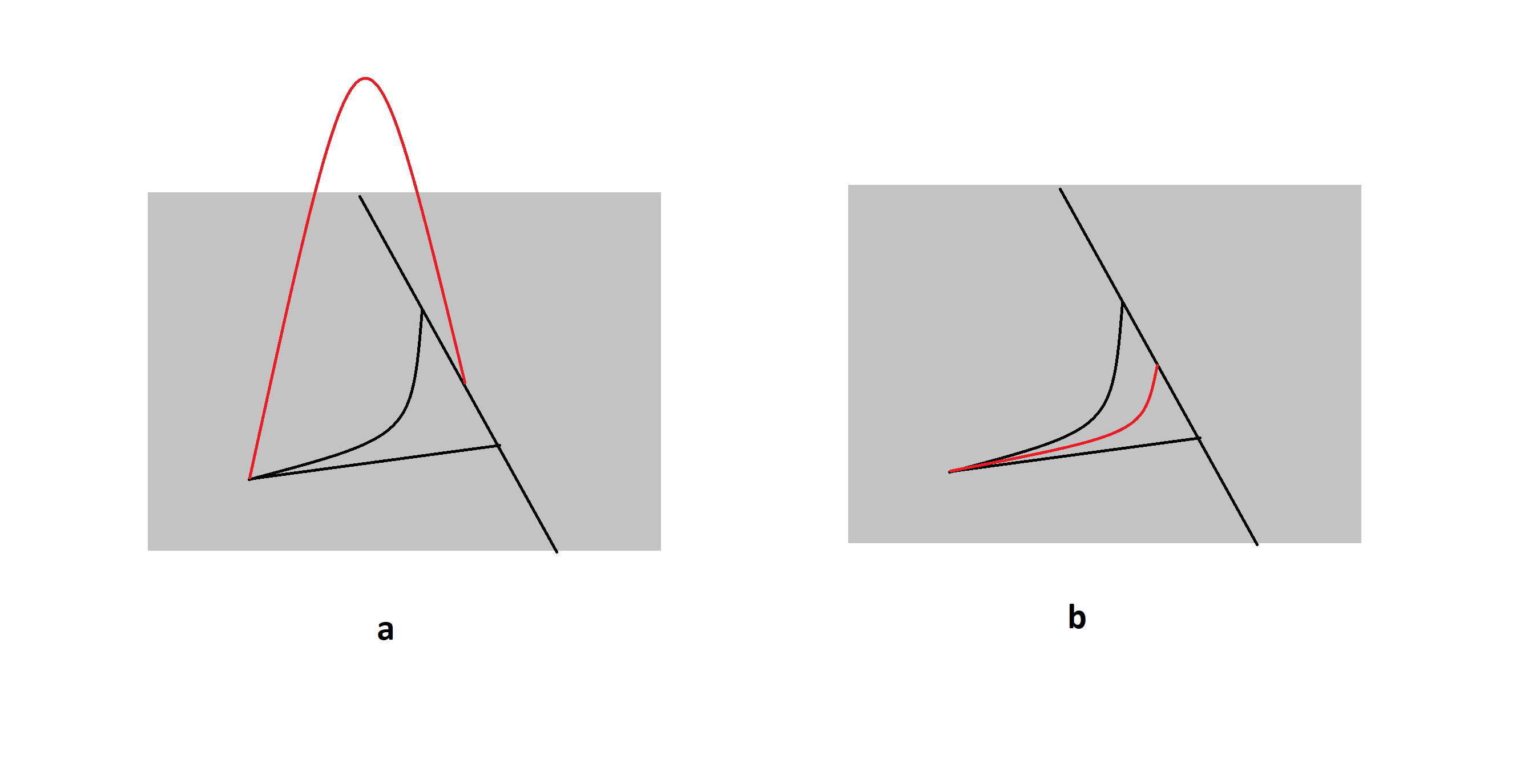}
\end{center}
\end{ex}

Moreover we have:

\begin{theo}
Any compact semialgebraic set $X$  of dimension  $k$ is bi-Lipschitz equivalent, with respect to the inner metric, to a  Lipschitz normally embedded set  $Y\subset\R^{2k+1}.$
\end{theo}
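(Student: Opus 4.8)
The plan is to combine the normal embedding theorem for semialgebraic sets with the generic-projection Lemma~\ref{whitney}. First I would invoke the Normal Embedding Theorem of Birbrair and Mostowski: for the compact semialgebraic set $X$ of dimension $k$, equipped with its inner metric, there exist an integer $N$ and a semialgebraic set $Y_0\subset\R^N$ which is Lipschitz normally embedded, together with a semialgebraic homeomorphism $h:X\to Y_0$ that is bi-Lipschitz with respect to the inner metric of $X$ and the inner metric of $Y_0$. Since $h$ is a homeomorphism, $Y_0$ is again compact of dimension $k$; and since including $\R^N$ isometrically into a larger Euclidean space changes neither the inner nor the outer metric (so the LNE property persists), after enlarging $N$ if necessary we may assume $N>2k+1$.

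Next I would push $Y_0$ down to the target dimension by a generic linear projection. Applying Lemma~\ref{whitney} to $Y_0\subset\R^N$, which has dimension $k$, I obtain (in a suitable orthonormal coordinate system) an orthogonal projection $\pi:\R^N\to\R^{2k+1}$ whose restriction to $Y_0$ is a bi-Lipschitz embedding for the outer metric. Set $Y:=\pi(Y_0)\subset\R^{2k+1}$; it is a compact semialgebraic set, and I claim it is the desired normally embedded model.

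The verification is where the argument has its content. Write $d_{\mathrm{in}}$ and $d_{\mathrm{out}}$ for inner and outer distances on the relevant set. Because $\pi$ is an orthogonal projection it does not increase the length of any path, so $d_{\mathrm{in}}^{Y}(\pi x,\pi y)\le d_{\mathrm{in}}^{Y_0}(x,y)$ for all $x,y\in Y_0$. Chaining this with the two hypotheses—$Y_0$ is LNE, hence $d_{\mathrm{in}}^{Y_0}\le C_1\, d_{\mathrm{out}}^{Y_0}$, and $\pi$ is outer bi-Lipschitz, hence $d_{\mathrm{out}}^{Y_0}(x,y)\le C_2\, d_{\mathrm{out}}^{Y}(\pi x,\pi y)$—yields $d_{\mathrm{in}}^{Y}(\pi x,\pi y)\le C_1 C_2\, d_{\mathrm{out}}^{Y}(\pi x,\pi y)$. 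Since the reverse inequality $d_{\mathrm{out}}^Y\le d_{\mathrm{in}}^Y$ is automatic, the set $Y$ is Lipschitz normally embedded.

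Finally the same estimates show that $\pi:Y_0\to Y$ is bi-Lipschitz for the inner metrics: one direction is the length--non-increase inequality above, and the other follows from $d_{\mathrm{in}}^{Y_0}\le C_1\, d_{\mathrm{out}}^{Y_0}\le C_1 C_2\, d_{\mathrm{out}}^{Y}\le C_1 C_2\, d_{\mathrm{in}}^{Y}$. Composing with $h$ then gives an inner bi-Lipschitz equivalence $X\to Y$, which is exactly what the theorem asserts. The only genuinely hard ingredient is the normal embedding theorem itself; granting it, the main point to watch is that a generic projection, which a priori can badly distort inner distances, is harmless here precisely because on the already-LNE set $Y_0$ the inner distance is controlled by the outer distance, and the outer distance is what the projection preserves up to a constant.
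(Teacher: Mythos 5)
Your proof is correct and follows essentially the same route as the paper: the Birbrair--Mostowski normal embedding theorem followed by the generic projection of Lemma~\ref{whitney}. The only difference is cosmetic: the paper extends the projection to an ambient bi-Lipschitz homeomorphism via Lemma~\ref{proj} and appeals to the fact that such a homeomorphism preserves the equivalence of the inner and outer metrics, whereas you verify the LNE property of the image directly --- your explicit chain of inequalities is in effect a more careful justification of the paper's one-line conclusion.
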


\begin{proof}
By \cite{BM} there exists $N>0$, such that $X$ is bi-Lipschitz equivalent, with respect to the inner metric, to a Lipschitz normally embedded semialgebraic set $Y\subset\R^N$. 
By Lemma \ref{proj} and Lemma \ref{whitney} there is a bi-Lipchitz homeomorphism $\Phi :\R^N\to\R^N$ such that $\Phi(Y)\subset \R^{2k+1}\times \{0\}.$ But $\Phi$ preserves the equivalence of the inner metric and the outer metric.
\end{proof}

We also need:

\begin{lem}\label{wyk}
Let $X, Y\subset \R^n$ be closed subsets. Assume that there
are linear subspaces $L^s$ and $H^{n-s},$ of dimensions $s$ and
$n-s$ respectively, such that $X\subset L^s$ and $Y\subset
H^{n-s}.$ If $f: X\to Y$ is a  bi-Lipschitz isomorphism, then there is a
tame  bi-Lipschitz homeomorphism $F: \R^n\to \R^n,$ such that $$F|_X=f.$$
\end{lem}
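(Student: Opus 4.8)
The plan is to reduce to the case where the two subspaces are put in standard complementary position and then to realize $f$ by exactly two ``block shears''. First I would choose a linear map $T$ of determinant one carrying $L^s$ onto $\R^s\times\{0\}$ and $H^{n-s}$ onto $\{0\}\times\R^{n-s}$. Since conjugation by a determinant-one linear map sends tame homeomorphisms to tame homeomorphisms, and $F|_X=f$ is equivalent to $(T\circ F\circ T^{-1})|_{T(X)}=T\circ f\circ T^{-1}$, it suffices to solve the problem in these coordinates. So I assume $X\subset \R^s\times\{0\}$ and $Y\subset\{0\}\times\R^{n-s}$, and I write a point of $\R^n$ as $(u,v)$ with $u\in\R^s$, $v\in\R^{n-s}$. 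In these coordinates $f$ has the form $x=(u,0)\mapsto(0,\phi(u))$ for a bi-Lipschitz map $\phi$ from the $u$-projection of $X$ onto the $v$-projection of $Y$, with inverse $\psi$.

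Next I would extend. Each coordinate of $\phi$ is a Lipschitz function on the (closed) $u$-projection of $X$, so by the McShane/Kirszbraun extension recalled in Section 2 (\cite{K}, \cite{Mc}) it extends to a Lipschitz function on all of $\R^s$; collecting these gives a Lipschitz map $\hat\phi:\R^s\to\R^{n-s}$ agreeing with $\phi$ on that projection, and similarly a Lipschitz extension $\hat\psi:\R^{n-s}\to\R^s$ of $\psi$. Then I define the two homeomorphisms
$$S_1(u,v)=(u,\;v+\hat\phi(u)),\qquad S_2(u,v)=(u-\hat\psi(v),\;v),$$
and set $F=S_2\circ S_1$.

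Finally I would check the three required properties. For $x=(u,0)\in X$ one has $S_1(u,0)=(u,\hat\phi(u))=(u,\phi(u))$ and then $S_2(u,\phi(u))=(u-\hat\psi(\phi(u)),\phi(u))=(0,\phi(u))=f(x)$, since $\hat\psi$ extends $\phi^{-1}$ and $\phi(u)$ lies in its domain, so $\hat\psi(\phi(u))=u$; hence $F|_X=f$. Both $S_1$ and $S_2$ are bi-Lipschitz homeomorphisms of $\R^n$, with explicit inverses $(u,v)\mapsto(u,v-\hat\phi(u))$ and $(u,v)\mapsto(u+\hat\psi(v),v)$, the Lipschitz constants being controlled by those of $\hat\phi,\hat\psi$. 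Moreover each is tame: $S_1$ alters only the coordinates $x_{s+1},\dots,x_n$, each by adding a Lipschitz function of $x_1,\dots,x_s$ alone, and each such single-coordinate shear is, after conjugating by a determinant-one linear change of coordinates, one of the defining generators $x\mapsto(x_1,\dots,x_{n-1},x_n+p(x_1,\dots,x_{n-1}))$; $S_2$ is handled symmetrically. Thus $F$ is tame, and undoing the conjugation by $T$ finishes the argument.

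The step I expect to be the main obstacle is the very first normalization: the construction uses in an essential way that $L^s$ and $H^{n-s}$ are complementary, $\R^n=L^s\oplus H^{n-s}$, so that ``adding an $H$-valued function of the $L$-coordinates'' genuinely decouples into shears. If $L^s\cap H^{n-s}\neq\{0\}$ the graph of $f$ can no longer be straightened by two shears — this is already visible for two finite subsets of a common line in $\R^2$ joined by a non-monotone bijection, where the extension must ``cross over'' through the ambient dimension — so one either reads complementarity into the hypothesis or supplies an extra routing argument. A minor secondary point is simply to present the two block shears strictly as compositions of the generators admitted by the definition of tameness.
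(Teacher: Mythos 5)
Your proof is correct and follows essentially the same route as the paper: after a determinant-one linear normalization, the paper also realizes $f$ as the composition of a shear $(x,y)\mapsto(x,y+f'(x))$ by a Lipschitz extension of $f$ with a second shear obtained from Lemma \ref{proj} applied to the projection of $\mathrm{graph}(f)$ onto $Y$, which unwinds to exactly your $S_2$. Your caveat about complementarity of $L^s$ and $H^{n-s}$ applies equally to the paper's argument, which makes the same tacit assumption when it identifies $L\times H$ with $\R^n$ (and which holds in every application of the lemma).
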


\begin{proof}
Note that instead of $f$ we can consider the mapping $A\circ
f\circ B,$ where $A, B$ are linear isomorphisms with determinant one. Consequently we can assume that $L^s=\R^s\times \{0\}$ and
$H^{n-s}=\{0\}\times R^{n-s}.$ Consider the closed set
$\Gamma:=graph(f).$ We can identify $\Gamma$ with a subset of
$L\times H\cong \R^n.$ Let us denote coordinates in $L$ by $x$ and
in $H$ by $y.$ We have the following diagram:
\begin{center}
\begin{picture}(240,120)(-40,40)
\put(180,160){\makebox(0,0)[tl]{$\Gamma$}}
\put(20,40){\makebox(0,0)[tl]{$X$}}
\put(180,40){\makebox(0,0)[tl]{$Y$}}
\put(190,100){\makebox(0,0)[tl]{$\pi$}}
\put(95,50){\makebox(0,0)[tl]{$f$}}
\put(80,100){\makebox(0,0)[tl]{$\psi$}}
\put(45,35){\vector(1,0){125}} \put(40,45){\vector(4,3){130}}
\put(183,145){\vector(0,-1){100}}
\end{picture}
\end{center}
\vspace{3mm} \noindent Here $\psi (x,0)=(x, f(x))$  and $\pi$ is a
projection. Since $f$ and $\psi$ are  bi-Lipschitz homeomorphisms, the
projection $\pi$ is  a bi-Lipschitz embedding. By Lemma
\ref{proj} the mapping $\pi$ has an extension to a tame  bi-Lipschitz
homeomorphism $\Pi : \R^n\to\R^n.$ Moreover,  $\psi$ is the
restriction of a tame bi-Lipschitz homemorphism of the form
$$\Psi : R^n\ni (x,y)\to (x, y+f'(x))\in \R^n$$
where $f'$ is an extension of $f$ to the whole of $L.$  In particular
$$f=\Pi\circ\Psi|_X$$
and it is enough to take $F=\Pi\circ\Psi.$
\end{proof}

Now we are in a position to prove:

\begin{theo}\label{glowne01}
Let $X,Y\subset \R^n$ be closed semialgebraic sets of dimension $k.$
Let $f: X\to Y$ be a  bi-Lipschitz homeomorphim. 
If  $n \ge 4k+2$, then $f$ can be extended to a tame  bi-Lipschitz homeomorphism $F: \R^n\to\R^n.$
Moreover, there is a continuous  family of tame  bi-Lipschitz homeomorphisms
 $F_t: \R^n\times\R\to\R^n,$ such that $F_0=identity \
and \ F_1|_X = f.$
\end{theo}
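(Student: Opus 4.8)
The plan is to reduce the problem to Lemma~\ref{wyk} by moving, through two \emph{independent} tame homeomorphisms, the source $X$ into a linear subspace $L$ of dimension $2k+1$ and the target $Y$ into a \emph{complementary} linear subspace $H$ of dimension $n-2k-1$. The two maps need not be compatible with one another, since Lemma~\ref{wyk} only requires the two sets to sit in complementary subspaces. The hypothesis $n\ge 4k+2=(2k+1)+(2k+1)$ is exactly what makes it possible to place two such $(2k+1)$-dimensional data in general position inside $\R^n$.

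First I would put $X$ in position. Since $n>2k+1$, Lemma~\ref{whitney} gives a linear change of coordinates (which we may take of determinant one) after which the projection of $X$ onto $\R^{2k+1}\times\{0\}$ is a bi-Lipschitz embedding; by Lemma~\ref{proj} this projection extends to a tame bi-Lipschitz homeomorphism $\Pi\colon\R^n\to\R^n$ with $\Pi(X)\subset L:=\R^{2k+1}\times\{0\}$. Set $X'=\Pi(X)$.

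Next, and this is the heart of the argument, I would place $Y$ in a subspace complementary to $L$. Iterating the projection argument of Lemma~\ref{whitney}, I project $Y$ along one good direction at a time; at each of the $2k+1$ steps the bad directions form a semialgebraic set of dimension at most $2k$, while the projective space of available directions has dimension at least $n-2k-1$, which exceeds $2k$ precisely because $n\ge 4k+2$. Thus good directions exist at every step and, being the complement of a set of dimension $\le 2k$, form an open dense semialgebraic set; this leaves enough freedom to choose the $2k+1$ projection directions so that, in addition to being good, the resulting $(n-2k-1)$-dimensional image subspace $H$ is in general position with respect to $L$. As $\dim H+\dim L=n$, general position forces $H\cap L=\{0\}$, i.e. $L\oplus H=\R^n$. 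By Lemma~\ref{proj} the associated bi-Lipschitz projection of $Y$ onto $H$ extends to a tame bi-Lipschitz homeomorphism $\Sigma\colon\R^n\to\R^n$ with $Y':=\Sigma(Y)\subset H$.

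Now $g:=\Sigma\circ f\circ\Pi^{-1}\colon X'\to Y'$ is a bi-Lipschitz isomorphism with $X'\subset L$ and $Y'\subset H$, two complementary subspaces of dimensions $2k+1$ and $n-2k-1$; Lemma~\ref{wyk} then yields a tame bi-Lipschitz homeomorphism $G\colon\R^n\to\R^n$ with $G|_{X'}=g$. I then set $F:=\Sigma^{-1}\circ G\circ\Pi$. The tame maps are closed under composition and inversion (each generating shear and each determinant-one linear map has an inverse of the same type), so $F$ is tame, and for $x\in X$ one checks directly that $F(x)=\Sigma^{-1}(g(\Pi(x)))=\Sigma^{-1}(\Sigma(f(x)))=f(x)$, i.e. $F|_X=f$. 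For the isotopy I would connect each tame building block to the identity by an explicit continuous family of tame maps: a shear $(x,y)\mapsto(x,y+p(x))$ by $(x,y)\mapsto(x,y+t\,p(x))$, and a determinant-one linear map by a path in the connected group $\mathrm{SL}_n(\R)$; concatenating these families over the blocks composing $F$ produces the desired continuous family $F_t$ with $F_0=\mathrm{id}$ and $F_1=F$. The main obstacle is the general-position step for $Y$: one must simultaneously keep all the intermediate projections bi-Lipschitz and steer the final subspace $H$ off of $L$, and it is precisely here that the bound $n\ge 4k+2$ enters as both necessary and sufficient.
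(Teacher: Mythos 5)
Your proposal is correct and follows essentially the same route as the paper: apply Lemma~\ref{whitney} and Lemma~\ref{proj} to move $X$ and $Y$ by tame homeomorphisms into complementary subspaces of dimensions $2k+1$ and $n-2k-1$ (the paper achieves the complementarity by post-composing with a determinant-one linear map rather than by your general-position choice of projection directions, but these are equivalent), then invoke Lemma~\ref{wyk} and build the isotopy by scaling each shear and connecting the determinant-one linear factors to the identity.
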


\begin{proof}
Apply Lemma \ref{whitney} to $X$ and $f(X).$ Then in virtue of
Lemma \ref{proj} we can assume that there exist tame  bi-Lipschitz homeomorphisms
$A,B:\R^n\to\R^n$ such that $A(X)\subset \R^{2k+1}\times \{0\}$
and $B(f(X))\subset \{0\}\times \R^{n-2k-1}$ (if necessary we
compose $A$ and $B$ with suitable affine transformations with
determinant $1$). Consider $f'=B\circ f\circ A^{-1};$ of course we
can assume that $f=f'.$ In particular we can assume that $X\subset
\R^{2k+1}\times \{0\}$ and $f(X)\subset \{0\}\times \R^{n-2k-1}.$ Now it is enough to apply Lemma \ref{wyk} to
obtain a tame bi-Lipschitz homeomorphism $F : \R^n\to\R^n$ such that
$$F|_X=f.$$ Moreover, there is a continous  family of tame  bi-Lipschitz
homeomorphisms $F_t: \R^n\times\R\to\R^n,$ such that $F_0=identity \
and \ F_1|_X = f.$ Indeed,  every triangular  bi-Lipschitz homeomorphism
$G=(x_1,...,x_{n-1},x_n+p(x_1,...,x_{n-1}))$ is countinously 
Lipschitz isotopic to the identity by $t\to
G_t=(x_1,...,x_{n-1},x_n+tp_n(x_1,...,x_{n-1}))$ and the same is
true for any linear  mapping with determinant  $1$ (such a mapping
is a composition of linear mappings of triangular form). 
\end{proof}

\begin{co}
Let $X$ be a closed semialgebraic 
set   of dimension $k.$ 
If  $n \ge 4k+2$, then $f$ has  a unique bi-Lipschitz embedding into $\R^n$ as semialgebraic set 
(up to a bi-Lipschitz homeomorphism of $\R^n$).
\end{co}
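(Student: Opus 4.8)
The plan is to read this off directly from Theorem \ref{glowne01}, since the definition of ``unique embedding'' given in the introduction is precisely the assertion that any two bi-Lipschitz embeddings with semialgebraic image differ by a bi-Lipschitz self-homeomorphism of $\R^n$. So the whole task reduces to recasting the extension theorem in the language of embeddings.

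First I would dispose of existence: by Theorem \ref{zan} the set $X$ admits a bi-Lipschitz embedding into $\R^{2k+1}$, and since $n\ge 4k+2\ge 2k+1$ we may view this as a bi-Lipschitz embedding into $\R^n$ whose image is a closed semialgebraic set of dimension $k$. Hence at least one admissible embedding is available. For uniqueness I would take two arbitrary bi-Lipschitz embeddings $f,g:X\to\R^n$ with $f(X),g(X)$ semialgebraic, set $Y_1=f(X)$ and $Y_2=g(X)$, and consider the composition $h=g\circ f^{-1}:Y_1\to Y_2$. By Definition \ref{embedding} the sets $Y_1,Y_2\subset\R^n$ are closed semialgebraic, and since $f,g$ are bi-Lipschitz they both have dimension $k$; moreover $h$ is a bi-Lipschitz homeomorphism between them. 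As $n\ge 4k+2$, Theorem \ref{glowne01} applies to $h$ and yields a tame bi-Lipschitz homeomorphism $\Phi:\R^n\to\R^n$ with $\Phi|_{Y_1}=h$. Evaluating at $f(x)$ for $x\in X$ gives $\Phi(f(x))=h(f(x))=g(f^{-1}(f(x)))=g(x)$, so $g=\Phi\circ f$ on $X$, which is exactly the required uniqueness.

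The only point that deserves a word of care is that $h=g\circ f^{-1}$ need not be a semialgebraic map even when $f$ and $g$ are, but this is harmless: Theorem \ref{glowne01} assumes only that the source and target are closed semialgebraic sets, not that the map between them is semialgebraic. Consequently there is no genuine obstacle in the argument, and I do not expect any hard step. The entire mathematical content lives in Theorem \ref{glowne01} (and, via Lemmas \ref{proj}, \ref{whitney} and \ref{wyk}, in the Whitney-type projection that forces the ambient dimension $4k+2$); the corollary is a purely formal translation of that theorem into the uniqueness formulation.
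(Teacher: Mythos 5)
Your proposal is correct and follows exactly the route of the paper: existence from Theorem \ref{zan} (noting $n\ge 4k+2\ge 2k+1$), and uniqueness by applying Theorem \ref{glowne01} to $h=g\circ f^{-1}$ between the two semialgebraic images. The paper compresses this into two sentences, and your observation that Theorem \ref{glowne01} does not require the map itself to be semialgebraic is a correct and worthwhile point of care.
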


\begin{proof}
The set $X$ has a bi-Lipschitz embedding as semialgebraic set into $\R^n$ by Corollary \ref{zan}. It is unique by Theorem \ref{glowne01}.
\end{proof}

\section{Extension of global bi-Lipschitz, semialgebraic embedding}

Of course all results of previous section holds also in the category $\mathcal LS.$ However we show here that for this category we can obtain better results.

\begin{defi}
Let $L^s, H^{n-s-1}$ be two disjoint linear subspaces of $\Bbb P^n.$ Let $\pi_\infty$ be a hyperplane (a hyperplane at infinty) and assume that
$L^s \subset \pi_\infty.$ By a projection $\pi_L$ with center $L^s$ we mean the mapping:
$$ \pi_L : \R^n=\Bbb P^n\setminus \pi_\infty\ni x \mapsto <L^s,x> \cap H^{n-s-1}\in H^{n-s-1}\setminus \pi_\infty=\R^{n-s-1}.$$ Here by $<L,x>$
we mean a linear subspace spanned by $L$ and $\{x\}.$
\end{defi}

\begin{lem}\label{lemat}
Let $X$ be a closed subset of $\R^n.$ Denote by $\Lambda\subset \pi_\infty$ the set of directions of all secants of $X$ and let $\Sigma= cl(\Lambda)$
(where $\pi_\infty$ is a hyperplane at infinity and we consider the projective closure). Let $\pi_L:\R^n\to \R^l$ be a projection with center $L$.
Then $\pi_L|_X$ is a bi-Lipschitz embedding if and only if $L\cap \Sigma=\emptyset.$ 
\end{lem}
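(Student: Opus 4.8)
The plan is to prove the equivalence in Lemma~\ref{lemat} by characterizing when the projection $\pi_L$ fails to be bi-Lipschitz, splitting into the two standard failure modes of a linear projection: loss of injectivity and loss of the lower Lipschitz bound (properness/non-degeneracy). First I would unwind the definitions. The center $L=L^s\subset\pi_\infty$ is a subspace at infinity, and $\pi_L$ sends $x\in\R^n$ to the intersection of the affine span $\langle L,x\rangle$ with the complementary space $H^{n-s-1}$. Choosing suitable coordinates, $\pi_L$ is just an orthogonal-type linear projection of $\R^n$ onto an $\ell$-dimensional subspace (with $\ell = n-s-1$), whose kernel directions are exactly the points of $L$. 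This reduces the statement to a linear-projection fact essentially identical to the key computation already carried out in the proof of Lemma~\ref{whitney}.

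For the direction ($L\cap\Sigma=\emptyset \Rightarrow$ bi-Lipschitz), I would reuse the argument of Lemma~\ref{whitney} almost verbatim. Injectivity of $\pi_L|_X$ is equivalent to: no secant direction of $X$ lies in $L$, i.e.\ $L\cap\Lambda=\emptyset$, which follows from $L\cap\Sigma=\emptyset$ since $\Lambda\subset\Sigma$. For the quantitative lower bound, I would use the angle estimate from Lemma~\ref{whitney}: for each secant direction $S\in\Sigma$ the sine of the angle $a(L,S)$ between $L$ and the line $S$ is continuous and strictly positive (strict positivity is exactly $L\cap\Sigma=\emptyset$), and since $\Sigma$ is compact this angle is bounded below by some $\epsilon>0$. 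This yields $\|x-y\|\ge\|\pi_L(x)-\pi_L(y)\|\ge\epsilon\|x-y\|$ for all $x,y\in X$, and the same angle bound applied to the points at infinity of $X$ (which lie in $\Sigma$) forces $\pi_L|_X$ to be proper, hence a bi-Lipschitz embedding by Proposition (the $\mathbf{V}(f)$-epimorphism criterion) or directly.

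For the converse ($\pi_L|_X$ bi-Lipschitz $\Rightarrow L\cap\Sigma=\emptyset$), I argue by contradiction. Suppose there is a direction $v$ with $[v]\in L\cap\Sigma$. If $[v]\in\Lambda$, then $v$ is an honest secant direction, so there exist distinct $x,y\in X$ with $x-y$ parallel to $v\in L$; then $\pi_L(x)=\pi_L(y)$, contradicting injectivity. If $[v]\in\Sigma\setminus\Lambda$, then $[v]$ is a limit of secant directions $[x_m-y_m]$, and the angle $a(L,[x_m-y_m])\to 0$; rescaling the estimate $\|\pi_L(x_m)-\pi_L(y_m)\|\ge c\,\sin(a(L,[x_m-y_m]))\,\|x_m-y_m\|$ shows the lower Lipschitz constant of $\pi_L|_X$ must be $0$, contradicting the bi-Lipschitz assumption. (One must also note that if $[v]$ is a point at infinity of $X$ itself this gives a failure of properness, covered by the same limiting argument.)

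The main obstacle is the converse, specifically the case $[v]\in\Sigma\setminus\Lambda$ where $v$ is only a \emph{limiting} secant direction: here no single pair of points of $X$ sees $\pi_L$ fail, so I cannot contradict injectivity directly and must instead extract a sequence of secant directions approaching $L$ and track how the angle degenerating to $0$ destroys the uniform lower bound. Making this limiting contradiction clean requires care in separating the two geometric phenomena the angle controls—failure of injectivity in the limit versus failure of properness at infinity—since both are encoded in the single condition $L\cap\Sigma\neq\emptyset$; the coordinate normalization that turns $\pi_L$ into a genuine orthogonal projection is what lets me invoke the exact $\sin(a(L,S))$ estimate of Lemma~\ref{whitney} uniformly.
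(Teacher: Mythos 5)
Your proposal is correct and follows essentially the same route as the paper: the $\sin(a(L,S))$ estimate combined with compactness of $\Sigma$ for sufficiency, and a sequence of secants whose directions degenerate into $L$ for necessity; the only cosmetic difference is that you handle sufficiency in one step via the angle between a secant direction and the whole subspace $L$, whereas the paper inducts over a chain of projections from single points of $L$, reducing each step to the computation in Lemma \ref{whitney}. One slip to fix in the converse: the estimate you need is an upper bound, $\|\pi_L(x_m)-\pi_L(y_m)\|\le \sin(a(L,[x_m-y_m]))\,\|x_m-y_m\|$ (with equality for the orthogonal projection, as in the paper's identity $\|x_n-y_n\|\sin(a(P_1,l_n))=\|\pi_1(x_n)-\pi_1(y_n)\|$), not the ``$\ge$'' you wrote, since the point is that the images collapse as the angle tends to $0$.
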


\begin{proof}
a) Assume that $L\cap \Sigma=\emptyset.$ We will proceed by induction.
Since a linear affine mapping is a bi-Lipschitz homeomorphism, we can assume that $\pi_L$ coincide with the projection
$\pi: \R^n\ni (x_1,...,x_n) \mapsto (x_1,...,x_k,0,...,0)\in \R^k\times \{0,...,0\}.$ We can decompose $\pi$ into two projections:  
$\pi=\pi_{2}\circ \pi_{1}$, where $\pi_1:\R^n \ni (x_1,...,x_n)\mapsto (x_1,...,x_{n-1},0)$ is a projection with a center $P_1=(0:0:...:1)$
and $\pi_2 :\R^{n-1}\ni (x_1,...,x_{n-1},0)\mapsto (x_1,...,x_{k},0,...,0)\in \R^{k}\times \{(0,...,0)\}$ is a projection with a center $L':=\{ x_0=0,...,x_{k}=0\}.$ Since $P_1\in L$ and consequently $P_1\not\in \Sigma$, we can check exactly as in the proof of lemma \ref{whitney} that $\pi_1$ is bi-Lipschitz. Now
note that if $\pi_1(X)=X'$, then $\Sigma'=\pi_1(\Sigma).$ Moreover $L'=L\cap \{ x_n=0\}$ and $<L', P_1>=L$. This means that $\Sigma'\cap L'=\emptyset.$ Now we can finish by induction.

\vspace{5mm}

b) Assume that $\pi_L|_X$ is a bi-Lipschitz and $\Sigma \cap L\not=\emptyset.$ As before we can change a system of coordinates in this way that
$\pi: \R^n\ni (x_1,...,x_n) \mapsto (x_1,...,x_k,0,...,0)\in \R^k\times \{0,...,0\}.$ Moreover, we can assume that $P_1=(0:0:...:1)\in \Sigma.$
Now we can see that $\pi_1$ is not bi-Lipschitz. Indeed there is a sequence of secants $l_n=(x_n,y_n)$ of $X$ whose directions tends to $P_1.$
Hence $||x_n-y_n||sin(a(P_1,l_n))=||\pi_1(x_n)-\pi_1(y_n)||$ and consequently $$\frac{||x_n-y_n||}{||\pi_1(x_n)-\pi_1(y_n)||}=\frac{1}{sin(a(P_1,l_n))}\to \infty.$$ Now it is enough to note that $||\pi_2(x)-\pi_2(y)||\le ||x-y||,$ hence $||\pi(x_n)-\pi_(y_n)||=||\pi_2(\pi_1(x_n))-\pi_2(\pi_1(y_n))||\le ||\pi_1(x_n)-\pi_1(y_n)||$. Thus  $$\frac{||x_n-y_n||}{||\pi(x_n)-\pi(y_n)||}\ge \frac{||x_n-y_n||}{||\pi_1(x_n)-\pi_1(y_n)||}\to \infty.$$ This contradiction finishes the proof.
\end{proof}

\begin{lem}\label{wykres}
Let $X\subset \R^{n}$ be  a closed  set and let $f: X\to\R^m$ be a Lipschitz mapping. Let $Y:=graph(f)\subset \R^n\times \R^m.$
Then the mapping $\phi: X\ni x \mapsto (x,f(x)) \in Y$ is a bi-Lipschitz embedding.
\end{lem}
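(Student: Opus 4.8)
The plan is to verify directly the two clauses of Definition \ref{embedding}: that $Y=\phi(X)$ is closed in $\R^n\times\R^m$ and that $\phi$ is a bi-Lipschitz isomorphism onto $Y$. No Whitney-type machinery is needed, since the statement reduces to an elementary norm estimate. Write $L$ for a Lipschitz constant of $f$, so that $\|f(x)-f(x')\|\le L\|x-x'\|$ for all $x,x'\in X$, and equip $\R^n\times\R^m$ with the Euclidean norm.

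First I would record the two-sided estimate. For $x,x'\in X$ one has
$$\|\phi(x)-\phi(x')\|^2=\|x-x'\|^2+\|f(x)-f(x')\|^2,$$
and the right-hand side lies between $\|x-x'\|^2$ and $(1+L^2)\|x-x'\|^2$. Taking square roots gives
$$\|x-x'\|\le\|\phi(x)-\phi(x')\|\le\sqrt{1+L^2}\,\|x-x'\|,$$
which shows at once that $\phi$ is Lipschitz with constant $\sqrt{1+L^2}$, that $\phi$ is injective, and that the lower bound holds. In fact the inverse $\phi^{-1}:Y\to X$ is simply the restriction to $Y$ of the orthogonal projection $\R^n\times\R^m\to\R^n$ onto the first factor, which is globally $1$-Lipschitz; this yields the Lipschitz property of $\phi^{-1}$ directly.

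It remains to check that $Y$ is closed. Here I would use that $f$ is continuous (being Lipschitz) together with the hypothesis that $X$ is closed: if $(x_j,f(x_j))\to(a,b)$ in $\R^n\times\R^m$, then $x_j\to a$ forces $a\in X$ by closedness of $X$, and continuity of $f$ then forces $b=\lim_j f(x_j)=f(a)$, so $(a,b)=(a,f(a))\in Y$. Combined with the bi-Lipschitz estimate above, this completes the verification that $\phi$ is a bi-Lipschitz embedding in the sense of Definition \ref{embedding}.

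I do not anticipate any genuine obstacle; the only point requiring a moment of care is the closedness of $Y$, where one must genuinely invoke closedness of $X$, since the graph of a continuous function over a non-closed domain need not be closed. Everything else is a single-line computation with the Euclidean norm on the product, and the argument is insensitive to the choice of norm up to bi-Lipschitz constants.
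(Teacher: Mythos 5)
Your proof is correct and follows essentially the same route as the paper's: a direct two-sided estimate $\|x-x'\|\le\|\phi(x)-\phi(x')\|\le C'\|x-x'\|$, where the paper obtains the upper constant $1+C$ from the triangle inequality while you obtain $\sqrt{1+L^2}$ from the Pythagorean identity for the Euclidean product norm — an immaterial difference. You additionally verify that $Y=\mathrm{graph}(f)$ is closed, a requirement of Definition \ref{embedding} that the paper's proof leaves implicit; this is a small but genuine gain in completeness rather than a different approach.
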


\begin{proof}
Since $f$ is Lipschitz, there is a constant $C$ such that $$||f(x)-f(y)||<C||x-y||.$$
We have $$||\phi(x)-\phi(y)||=||(x-y,f(x)-f(y))||\le$$  $$ \le ||x-y||+||f(x)-f(y)||\le ||x-y||+C||x-y||\le (1+C)||x-y||.$$ 
Moreover $$||x-y|| \le ||\phi(x)-\phi(y)||.$$ Hence $$||x-y||\le ||\phi(x)-\phi(y)||\le (1+C)||x-y||.$$
\end{proof}

\begin{lem}\label{glowne}
Let $X\subset \R^{2n}$ be  a closed semialgebraic set of dimension $k$ and $n\ge 2k+1.$ 
Assume that mappings
$$\pi_1 : X\ni (x_1,...,x_n,y_1,...,y_n)\to  (x_1,...,x_n)\in
\R^n$$ and
$$\pi_2 : X\ni (x_1,...,x_n,y_1,...,y_n)\to  (y_1,...,y_n)\in \R^n$$

\noindent are bi-Lipschitz embeddings. Then there exist linear isomorphisms $S,
T $ of $\R^n$ with determinant $1$,  such that if we change coordinates in $\R^n\times
\{0\}$ by $(z_1,...,z_n)=S(x_1,...,x_n)$ and we change coordinates
in $ \{0\}\times \R^n$ by $(w_1,...,w_n)=T(y_1,...,y_n)$, then all
projections

$$ q_r : X\ni (z_1,...,z_n,w_{1},...,w_n)\to  (z_1,...,z_{r},w_{r+1},...,w_n)\in \R^n,
 \ r=0,..., n$$

\noindent are bi-Lipschitz embeddings.
\end{lem}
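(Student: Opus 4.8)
The plan is to translate everything into the projective secant picture of Lemma \ref{lemat} and then run a dimension count that uses the hypothesis $n\ge 2k+1$. Write the hyperplane at infinity of $\R^{2n}$ as $\pi_\infty\cong \Bbb{RP}^{2n-1}$, with homogeneous coordinates $(x_1:\dots:x_n:y_1:\dots:y_n)$, and let $V_x,V_y\subset\pi_\infty$ be the $(n-1)$-dimensional projective subspaces spanned by the $x$- and $y$-directions. Let $\Sigma\subset\pi_\infty$ be the closure of the set of secant directions of $X$; exactly as in the proof of Lemma \ref{whitney}, $\dim\Sigma\le 2k$ because $\dim(X\times X\setminus\Delta)\le 2k$. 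The hypotheses that $\pi_1,\pi_2$ are bi-Lipschitz embeddings say, via Lemma \ref{lemat}, that $V_y\cap\Sigma=\emptyset$ and $V_x\cap\Sigma=\emptyset$.

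Next I would identify the center of each $q_r$. In the new coordinates $q_r$ kills $z_{r+1},\dots,z_n,w_1,\dots,w_r$, so its center is $L_r=\langle A_r,B_r\rangle$, where $A_r\subset V_z=V_x$ is the span of the coordinate directions $z_{r+1},\dots,z_n$ determined by $S$, and $B_r\subset V_w=V_y$ is the span of $w_1,\dots,w_r$ determined by $T$. As $r$ runs from $0$ to $n$ these form a linked pair of flags $V_z=A_0\supset A_1\supset\dots\supset A_n=\emptyset$ and $\emptyset=B_0\subset\dots\subset B_n=V_w$, with $\dim L_r=n-1$ throughout. The two extreme cases $L_0=V_z$ and $L_n=V_w$ are handled for free by the hypotheses, since $q_0$ and $q_n$ are just $\pi_2$ and $\pi_1$ composed with the linear isomorphisms $T$ and $S$; so by Lemma \ref{lemat} it remains to choose $S,T$ so that $L_r\cap\Sigma=\emptyset$ for the interior indices $r=1,\dots,n-1$.

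The heart of the matter is an observation that decouples the two flags. Since $V_z$ and $V_w$ are complementary, any $p\in\pi_\infty$ decomposes uniquely as $p=p_z+p_w$, and $p\in L_r=\langle A_r,B_r\rangle$ if and only if $p_z\in A_r$ and $p_w\in B_r$ separately. Crucially, because $\Sigma$ meets neither $V_z$ nor $V_w$, every $p\in\Sigma$ has both $p_z\ne 0$ and $p_w\ne 0$. Fixing $r$ and forming the incidence variety of triples $(A_r,B_r,p)$ with $p\in\Sigma\cap\langle A_r,B_r\rangle$, the fibre over a fixed $p$ is cut out by the two independent conditions $p_z\in A_r$ (codimension $r$ in $\mathrm{Gr}(n-r,n)$) and $p_w\in B_r$ (codimension $n-r$ in $\mathrm{Gr}(r,n)$), of total codimension $n$. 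Hence the set of bad pairs $(A_r,B_r)$, those with $\langle A_r,B_r\rangle\cap\Sigma\ne\emptyset$, has dimension at most $\dim\Sigma+2r(n-r)-n\le 2k+2r(n-r)-n$, i.e. codimension at least $n-2k\ge 1$ inside $\mathrm{Gr}(n-r,n)\times\mathrm{Gr}(r,n)$. This is exactly where $n\ge 2k+1$ enters.

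Finally I would assemble the flags. For each of the finitely many interior $r$ the bad locus pulls back, under the surjective map sending a pair of complete bases to $(A_r,B_r)$, to a proper closed semialgebraic subset of positive codimension in the space of basis pairs; the union over $r=1,\dots,n-1$ is still proper, so a generic choice avoids all of them. Taking $S,T$ to be the associated coordinate changes, and rescaling a single basis vector in each block to force $\det S=\det T=1$ (which does not alter any span $A_r$ or $B_r$), gives the required isomorphisms. The main obstacle I anticipate is precisely the linkage of the flags: one must ensure that, although the subspaces $L_r$ share the defining vectors, the generic position needed for each individual $L_r$ can be achieved simultaneously. The decoupling observation above is what makes this painless, since it reduces each constraint to independent generic-position statements for $A_r$ in $V_z$ and $B_r$ in $V_w$.
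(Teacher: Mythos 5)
Your proof is correct, and it reaches the same reduction as the paper --- via Lemma \ref{lemat}, everything comes down to choosing $S,T$ so that the center $L_r=\{z_1=\dots=z_r=0,\ w_{r+1}=\dots=w_n=0\}\subset\pi_\infty$ misses $\Sigma$ for each $r$ --- but you establish the genericity in a genuinely different way. The paper builds the linear forms $z_1,\dots,z_n$ and then $w_n,\dots,w_1$ one at a time: a generic member of a pencil of hyperplanes whose base locus misses $\Sigma$ cuts $\dim(\{z_1=\dots=z_r=0\}\cap\Sigma)$ down to $2k-r$ (proved by stratifying $\Sigma$ into smooth pieces), so after $n\ge 2k+1$ steps the intersection is empty, and the $w_j$'s are then interleaved with the same bookkeeping. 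You instead fix $r$, form the incidence variety of triples $(A_r,B_r,p)$ with $p\in\Sigma\cap\langle A_r,B_r\rangle$, and use the decoupling $p\in\langle A_r,B_r\rangle\Leftrightarrow p_z\in A_r,\ p_w\in B_r$ --- valid with the right codimension count precisely because $\Sigma\cap V_x=\Sigma\cap V_y=\emptyset$ forces $p_z\neq 0\neq p_w$ --- to show the bad locus in $\mathrm{Gr}(n-r,n)\times\mathrm{Gr}(r,n)$ has codimension at least $n-2k\ge 1$, and then take one simultaneous generic choice of flags. Your version is more uniform and makes explicit both where $n\ge 2k+1$ enters and why \emph{both} hypotheses on $\pi_1,\pi_2$ are needed (each contributes one of the two incidence conditions); the decoupling observation is only implicit in the paper's interleaved construction. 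The paper's step-by-step pencil argument is more elementary (no Grassmannians) and transfers more directly to the local variant (Lemma \ref{glowne11}), where one must simultaneously control the auxiliary set $Y'$ of limiting secant directions at the origin and shrink the representative of the germ along the way; if you wanted your argument to cover that case you would have to run the same incidence count a second time for $Y'$ with $\dim Y'\le 2k-1$. One small point worth stating explicitly in a write-up: the bad locus is closed (image of a compact incidence variety), so ``positive codimension'' indeed yields a nonempty open complement, and rescaling a single basis vector to normalize the determinants changes no span $A_r$ or $B_r$.
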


\begin{proof}
Let us denote by $\pi_\infty$ the hyperplane at infinity of $\R^n\times \R^n.$
Thus $\pi_\infty\cong \Bbb P^{2n-1}$ is a real projective
space of dimension $2n-1.$

We show that there are affine coordinates $(z_1,...,z_n)$ in
$\R^n\times \{0\}$ and affine coordinates $(w_1,...,w_n)$ in
$\{0\}\times \R^n$ such that all projection
$$ q_i : X\ni (z_1,...,z_n,w_{1},...,w_n)\to  (z_1,...,z_{n-i},w_{n-i+1},...,w_n)\in \R^n,
 \ i=0,..., n$$
are embeddings. On $\pi_\infty$ we have coordinates $(x:y).$ Since
the projections $\pi_1|X, \pi_2|X$ are bi-Lipschitz embedding we have by Lemma \ref{lemat} that $$\{(x:y)
\in\pi_\infty : x_1=0,...,x_n=0 \}\cap \Sigma=\emptyset$$ and  $$\{(x:y)
\in\pi_\infty : y_1=0,...,y_n=0 \}\cap \Sigma=\emptyset.$$

Since $\Sigma$ is a semialgebraic set of dimension at most $2k$, if  $H=\{ x \in \pi_\infty
: \sum^n_{i=1} c_i x_i=0\}$ is a generic hyperplane, then
$\dim H\cap \Sigma\le 2k-1.$ Indeed, the base points set of the pencil of such hyperplanes is $L=\{ x_1=0,...,x_n=0\}$
and we know that $L\cap \Sigma=\emptyset.$ Now consider a finite partition $\mathcal{S}=\{S_i\}$ of $\Sigma$ into smooth connected semialgebraic submanifolds.
Then $S_i\cap H$ is either equal to $S_i$ or this intersection has dimension at most $\dim S_i-1.$ The first possibility is excluded because in this case $S_i\subset L.$

Continuing in this fashion we see that we can choose $n$ generic
hyperplanes given by equations: $z_i=\sum^n_{k=1} a_{i,k}x_k, \
i=1,...,n$, such that for every $1\le r \le n$ the sets  $(\{ z_1=0,..., z_r=0\})\cap \Sigma$ are
semialgebraic subset of $\pi_\infty$ of
dimension at most $2k-r.$ In particular, the set $ (\{
z_1=0,..., z_{n}=0\}\cap \Sigma)$ is empty.

Now in the same way we can choose  a generic hyperplane given by
the equation $w_n=\sum^n_{k=1} b_{n,k}y_k, \ i=1,...,n$, such that
$\{ z_1=0,..., z_{n-1}=0, w_n=0\}\cap \Sigma=\emptyset$  and
additionally for every $0\le r\le n-1$ we have dim   $ \{
z_1=0,..., z_{r}=0, w_n=0\}\cap \Sigma\le 2k-r-1.$  Further we can construct
$w_{n-1}=\sum^n_{k=1} b_{n-1,k}y_k, \ i=1,...,n$, such that
$\{ z_1=0,..., z_{n-2}=0, w_{n-1}=0, w_n=0,\}\cap \Sigma=\emptyset$
 and additionally for every $0\le r\le n-2$ we
have dim   $\{ w_1=0,..., w_{r}=0, z_{n-1}=0, z_n=0\}\cap \Sigma\le
2k-r-2.$  Continuing in this manner we find a system of
coordinates $(z_1,...,z_n,w_1,...,w_n)$ we are looking for: for
all $0 \le r\le n$ we have $$\Sigma\cap \{ z_1=0,..., z_r=0,
w_{r+1}=0,..., w_n=0\}=\emptyset,$$ which implies that the mapping
$$ q_r : X\ni (z_1,...,z_n,w_{1},...,w_n)\to  (z_1,...,z_{r},w_{r+1},...,w_n)\in \R^n,
\ r=0,..., n$$
is a bi-Lipchitz embedding.  Moreover, we can always assume (by the construction)
that transformation $S: (x_1,..., x_n)\to (z_1,..., z_n)$ and $T:
(y_1,..., y_n)\to (w_1,..., w_n)$ have determinant $1.$
\end{proof}

Now we are in a position to prove the   main result of this section:

\begin{theo}\label{semialgebraic1}
Let $X$ be a  closed semialgebraic subset of $\R^n$ of dimension
$k.$ Let $f : X \to \R^n$ be a semialgebraic and bi-Lipschitz embedding. If $n\ge 2k+2,$ then
there exists a tame bi-Lipchitz and semialgebraic homeomorphism $F : \R^n\to\R^n$ such that
$$F|_X = f.$$ Moreover, there is a continuos and semialgebraic  family of tame bi-Lipchitz and semialgebraic homeomorphisms 
$F_t: \R^n\times\R\to\R^n,$ such that $F_0=identity \ and \ F_1|_X = f.$
\end{theo}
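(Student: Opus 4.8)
The plan is to reduce the problem to the one-coordinate-at-a-time situation produced by Lemma \ref{glowne}. First I would pass to the graph $\Gamma=\{(x,f(x)):x\in X\}\subset\R^n\times\R^n$, which is a closed semialgebraic set of dimension $k$. By Lemma \ref{wykres} the map $x\mapsto(x,f(x))$ is a bi-Lipschitz embedding, so the first-factor projection $\pi_1|_\Gamma$ is a bi-Lipschitz embedding with image $X$; the very same estimate, using that $f^{-1}$ is Lipschitz, shows that the second-factor projection $\pi_2|_\Gamma$ is a bi-Lipschitz embedding with image $Y=f(X)$. Thus $\Gamma$ satisfies the hypotheses of Lemma \ref{glowne} (here the ``$n$'' of that lemma is our $n$, and $n\ge 2k+2\ge 2k+1$).

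Next I would apply Lemma \ref{glowne} to $\Gamma$. This produces determinant-one linear maps $S,T$ of $\R^n$ and, after the induced coordinate changes $z=Sx$, $w=Ty$, a chain of projections $q_0,\dots,q_n$, each of which restricts to a bi-Lipschitz embedding of $\Gamma$ into $\R^n$, with $q_n(\Gamma)=S(X)$ and $q_0(\Gamma)=T(Y)$, and such that $q_r$ and $q_{r+1}$ differ in exactly one slot (the $(r+1)$-st, which carries $w_{r+1}$ for $q_r$ and $z_{r+1}$ for $q_{r+1}$). Telescoping, $q_0\circ(q_n|_\Gamma)^{-1}=T\circ f\circ S^{-1}$. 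Since $S,T$ and their inverses are tame, it is enough to extend each \emph{elementary transition} $\sigma_r=q_r\circ(q_{r+1}|_\Gamma)^{-1}\colon q_{r+1}(\Gamma)\to q_r(\Gamma)$ to a tame bi-Lipschitz semialgebraic homeomorphism of $\R^n$, and then compose all of these together with $S^{\pm1},T^{\pm1}$.

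The heart of the argument — and the step I expect to be the main obstacle — is realizing a single $\sigma_r$. Because both $q_r$ and $q_{r+1}$ are bi-Lipschitz embeddings, Proposition \ref{semialgebraic} lets me write $z_{r+1}$ as a global Lipschitz semialgebraic function of the $q_r$-coordinates and $w_{r+1}$ as such a function of the $q_{r+1}$-coordinates; the map $\sigma_r$ fixes all coordinates except the $(r+1)$-st, where it replaces $z_{r+1}$ by $w_{r+1}$. A single triangular shear cannot achieve this, since the new value of the active coordinate depends on its old value. The remedy, and the place where the hypothesis $n\ge 2k+2$ (rather than the $2k+1$ of Lemma \ref{glowne}) is consumed, is that $n-1\ge 2k+1$: by Lemma \ref{whitney} the $k$-dimensional set $q_{r+1}(\Gamma)$ admits, after a determinant-one linear change, a presentation in which one coordinate is a Lipschitz function of the others, so by Lemma \ref{proj} a tame shear frees a coordinate direction to use as scratch. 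With this spare slot available I would stash the outgoing value $z_{r+1}$ by a triangular shear, install the incoming value $w_{r+1}$ (now expressed through coordinates not involving the active slot) by a second triangular shear, and clear the scratch slot using a determinant-one rotation and shears; all factors are tame, and their restriction to $q_{r+1}(\Gamma)$ is exactly $\sigma_r$.

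Finally I would assemble $F$ as the composition of the extensions of the $\sigma_r$ with $S^{\pm1}$ and $T^{\pm1}$, giving a tame bi-Lipschitz semialgebraic homeomorphism with $F|_X=f$. The isotopy statement then comes for free, exactly as in the proof of Theorem \ref{glowne01}: each tame generator is Lipschitz-isotopic to the identity through tame maps — a triangular shear $(x_1,\dots,x_{n-1},x_n+p)$ via $t\mapsto(x_1,\dots,x_{n-1},x_n+tp)$, and a determinant-one linear map through a semialgebraic path in $SL_n$ — so concatenating these isotopies over all factors yields a continuous semialgebraic family $F_t$ with $F_0=\mathrm{id}$ and $F_1|_X=f$.
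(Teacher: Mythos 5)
Your global architecture --- pass to the graph, invoke Lemma \ref{glowne}, factor $T\circ f\circ S^{-1}$ into elementary one-slot transitions $\sigma_r$, and realize each transition by triangular shears using a spare coordinate --- is the same as the paper's, but the way you set it up opens a gap precisely at the step you yourself identify as the main obstacle. You apply Lemma \ref{glowne} to $\Gamma\subset\R^n\times\R^n$ with the lemma's parameter equal to $n$, so each intermediate set $q_r(\Gamma)$ occupies all $n$ coordinates and you must manufacture a scratch slot separately for every $r$. Your recipe for this --- Lemma \ref{whitney} followed by Lemma \ref{proj} applied to $q_{r+1}(\Gamma)$ --- requires a \emph{generic} determinant-one linear change of all $n$ coordinates before the shear that zeroes a slot. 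That linear change mixes the active slot $r+1$ with the passive ones, so after it the map to be realized is no longer ``replace $z_{r+1}$ by $w_{r+1}$ and fix everything else,'' and the subsequent stash/install/clear shears are no longer meaningful as described: $z_{r+1}$ and $w_{r+1}$ are not individual coordinates in the new frame, and the functions you want to shear by are not functions of ``the coordinates other than the active slot.'' To salvage the per-step argument you would need a coordinate direction $e_j$, $j\ne r+1$, that can be dropped from $q_{r+1}(\Gamma)$ \emph{and} from the target $q_r(\Gamma)$ with both projections remaining bi-Lipschitz (you need the second condition to clear the scratch slot and restore its original content afterwards, a step your sketch omits); that is an additional genericity requirement on the coordinates, of the same nature as in Lemma \ref{glowne} but not supplied by Lemma \ref{whitney} or Lemma \ref{proj} as you invoke them.

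The paper avoids all of this by spending the hypothesis $n\ge 2k+2$ \emph{before} invoking Lemma \ref{glowne}: using Lemma \ref{whitney} and Lemma \ref{proj} once at the outset, it moves $X$ into $\R^{2k+1}\times\{0\}$ and $f(X)$ into $\{0\}\times\R^{2k+1}$ and reduces to $n=2k+2$; the graph then lives in $\R^{n-1}\times\R^{n-1}$ and Lemma \ref{glowne} is applied with parameter $n-1=2k+1$. Every intermediate configuration $(z_1,\dots,z_r,0,f'_{r+1}(z),\dots,f'_{n-1}(z))$ then automatically has exactly one identically zero slot, which migrates from position $n$ down to position $1$, and each transition is literally two triangular shears $A_{r-1},B_{r-1}$ whose defining Lipschitz semialgebraic functions $P_{r-1},Q_{r-1}$ exist by Proposition \ref{semialgebraic} applied to the embeddings $q'_r$ and $q'_{r-1}$. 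Reorganize your proof along these lines; your reduction to elementary transitions and your final isotopy paragraph are otherwise in order.
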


\begin{proof}
Apply Lemma \ref{whitney} to $X$ and $f(X).$ Then in virtue of
Lemma \ref{proj} we can assume that there exists tame
bi-Lipschitz homeomorphisms $A,B:\R^n\to\R^n$ such that $A(X)\subset
\R^{2k+1}\times \{0\}$ and $B(f(X))\subset \{0\}\times \R^{2k+1}$
(if necessary we compose $A$ and $B$ with suitable affine
transformations with determinants equal to $1$). Consider
$f'=B\circ f\circ A^{-1},$ of course we can assume that $f=f'.$ In
particular we can assume that $X\subset \R^{2k+1}\times \{0\}$ and
$f(X)\subset \{0\}\times \R^{2k+1}$and that $n=2k+2.$  Thus
$f=(0,f_1,...,f_{n-1}).$

Applying Lemma \ref{glowne} to the set $X'=graph(f)\subset
\R^{2k+1}\times\R^{2k+1}$ we see that there are linear
transformations $ T, S:\R^{n-1}\to\R^{n-1}$ with determinant $1$, such that if we put
$(z_1,...,z_{n-1})= S(x_1,..., x_{n-1})$ and $(f_1'(z),...,
f'_{n-1}(z))= T(f_1\circ S^{-1}(z),..., f_{n-1}\circ S^{-1}(z))$
then all mappings
$$q_r': X \ni (z_1,...,z_{n-1},0)\to (z_1,...,z_r,f'_{r+1}(z),...,f'_{n-1}(z))\in \R^{n-1}$$
are semialgebraic and bi-Lipschitz embeddings (as a composition of the semialgebraic and  bi-Lipschitz mapping
$X\to graph(f)$ with $q_r$-notation as in Lemma \ref{glowne}).

Now we construct a sequence of tame semialgebraic and bi-Lipschitz homeomorphisms $A_{n-1},
A_{n-2}, ..., A_1, A_0$ and $B_{n-1},$ $ B_{n-2}, ...,
B_1, B_0$ such that for $z\in X$ we have
$$B_r\circ A_r\circ...\circ B_{n-1}\circ A_{n-1}(x)=
(z_1,..., z_r, 0, f'_{r+1}(z),...,f'_{n-1}(z)).$$ We proceed by
induction. For $r=n-1$ it is enough to put
$A_{n-1}=B_{n-1}=identity.$ Now assume that $1\le r <n-1$ and we
have constructed a sequence of tame semialgebraic bi-Lipschitz homeomorphisms $A_r,....,
A_{n-1}$ and $B_r,..., B_{n-1}$ such that for $z\in X$ we have
$$B_r\circ A_r\circ...\circ B_{n-1}\circ A_{n-1}(x)=(z_1,...,z_r,0,f'_{r+1}(z),...,
f'_{n-1}(z)).$$ We
show how to construct $A_{r-1}$ and $B_{r-1}.$ Note that the
mapping
$$q_r': X \ni (z_1,...,z_{n-1},0)\to (z_1,...,z_r,f'_{r+1}(z),...,f'_{n-1}(z))\in \R^{n-1}$$
is a bi-Lipchitz  embedding. Consequently there exists a semialgebraic and Lipschitz function
$P_{r-1}$ (see Proposition \ref{semialgebraic}) such that $$f'_r(z)=P_{r-1}(z_1,..., z_r, f'_{r+1}(z),...,
f'_{n-1}(z)).$$ Consider a tame semialgebraic and bi-Lipchitz homeomorphism
$$A_{r-1}: \R^n\ni (t_1,...,t_n)\to (t_1,...,t_{r}, t_{r+1}+
P_{r-1}(t_1,...,t_r,t_{r+2},...,t_n),t_{r+2},...,t_n)\in \R^n.$$
Thus for $z\in X$ we have
$$A_{r-1}(z_1,...,z_r,0,f'_{r+1}(z),...,
f'_{n-1}(z))=(z_1,...,z_r,f'_r(z),...,f'_{n-1}(z)).$$ Since the
mapping $(z_1,...,z_{r-1},f'_{r}(z),..., f'_{n-1}(z))$ restricted
to $X$ is a semialgebraic and bi-Lipschitz embedding, there exists a semialgebraic and Lipchitz function $Q_{r-1}$
such that $$z_r=Q_{r-1}(z_1,...,z_{r-1},f'_{r}(z),...,
f'_{n-1}(z)).$$ Consider a tame semialgebraic and bi-Lipchitz homeomorphism
$$B_{r-1}: \R^n\ni (t_1,...,t_n)\to (t_1,...,t_{r}-Q_{r-1}(t_1,...,t_{r-1},t_{r+1},...t_n),
t_{r+1},...,t_n)\in \R^n.$$ For $z\in X$ we have
$$B_{r-1}\circ A_{r-1}(z_1,..., z_r, 0,
f'_{r+1}(z),...,f'_{n-1}(z))=(z_1,..., z_{r-1}, 0,
f'_{r}(z),...,f'_{n-1}(z)).$$ Finally by the mathematical
induction we obtain a sequence of tame semialgebraic and bi-Lipchitz homeomorphisms $A_{n-1},
A_{n-2}, ..., A_1, A_0$ and $B_{n-1}, B_{n-2}, ..., B_1, B_0$ such
that for $z\in X$ we have
$$B_0\circ A_0\circ...\circ B_{n-1}\circ A_{n-1}(z)=
( 0, f'_{1}(z),...,f'_{n-1}(z)).$$ If we take
$T_1(x_1,...,x_n)=(T(x_1,...,x_{n-1}), x_n)$ and
$S_1(y_1,...,y_n)=(y_1, S(y_2,...,y_n)),$ then
$$S_1^{-1}\circ B_0\circ A_0\circ...\circ B_{n-1}\circ
A_{n-1}\circ T_1(x)=(0,f_1(x),...,f_{n-1}(x)).$$ Now it is enough
to put $F=S_1^{-1}\circ B_0\circ A_0\circ...\circ B_{n-1}\circ
A_{n-1}\circ T_1.$ 

The last statement we prove exactly as in the proof of Theorem \ref{glowne01}.
\end{proof}

\begin{co}\label{unique}
Let $X$ be a closed semialgebraic  set  of dimension $k.$ In $n\ge
2k+2$, then $X$ has a  unique semialgebraic and bi-Lipchitz embedding into $\R^n$ (up to a semialgebraic 
and bi-Lipschitz homeomorphism of $\R^n$).
\end{co}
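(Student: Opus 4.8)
The plan is to deduce this directly from Theorem \ref{semialgebraic1}, in exact parallel with the way the earlier corollary was deduced from Theorem \ref{glowne01}. There are two things to establish: that $X$ admits at least one semialgebraic bi-Lipschitz embedding into $\R^n$, and that any two such embeddings are related by an ambient semialgebraic bi-Lipschitz homeomorphism.

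For existence I would invoke Theorem \ref{zan}. Since $n\ge 2k+2>2k+1$, the set $X$ embeds bi-Lipschitzly into $\R^{2k+1}$, and because the embedding there is realized by linear projections it is in fact semialgebraic; composing with the inclusion $\R^{2k+1}\times\{0\}\subset \R^n$ gives a semialgebraic and bi-Lipschitz embedding of $X$ into $\R^n$. This shows the notion is non-vacuous.

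For uniqueness, let $f,g:X\to\R^n$ be two semialgebraic bi-Lipschitz embeddings. I would set $Y:=f(X)$, a closed semialgebraic subset of $\R^n$. Since $f$ is a bi-Lipschitz homeomorphism onto its image, $\dim Y=k$ and $f^{-1}:Y\to X$ is semialgebraic and bi-Lipschitz, so $h:=g\circ f^{-1}:Y\to\R^n$ is a semialgebraic and bi-Lipschitz embedding of the $k$-dimensional closed semialgebraic set $Y$. The hypothesis $n\ge 2k+2$ is exactly what Theorem \ref{semialgebraic1} requires, so applying it to the pair $(Y,h)$ yields a tame semialgebraic and bi-Lipschitz homeomorphism $\Phi:\R^n\to\R^n$ with $\Phi|_Y=h$. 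Then for every $x\in X$ we get $\Phi(f(x))=h(f(x))=g(f^{-1}(f(x)))=g(x)$, that is $g=\Phi\circ f$, which is precisely the asserted uniqueness (and even produces a \emph{tame} $\Phi$).

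The main obstacle does not lie in this corollary at all: the substantive work is concentrated in Theorem \ref{semialgebraic1}, which I am free to use as a black box. The only points demanding genuine care are bookkeeping ones, namely confirming that $g\circ f^{-1}$ is honestly semialgebraic and bi-Lipschitz as a map on the semialgebraic set $Y$ (a routine consequence of $f,g$ and $f^{-1}$ all being semialgebraic bi-Lipschitz), and that $\dim f(X)=k$, so that the dimension hypothesis transfers correctly from $X$ to $Y$ before Theorem \ref{semialgebraic1} is invoked.
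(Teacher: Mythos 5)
Your proposal is correct and matches the paper's intent exactly: the paper gives no explicit proof for this corollary, but the analogous earlier corollary is deduced from Theorem \ref{glowne01} in precisely the way you describe (existence from Theorem \ref{zan}, uniqueness by extending $g\circ f^{-1}$ via the extension theorem). Your bookkeeping about $\dim f(X)=k$ and the semialgebraicity of $g\circ f^{-1}$ is the right level of care and introduces no gap.
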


\begin{ex}
{\rm Let $X\subset \R^3$ be semi-algebraic curves as on the picture (a)  and $Y\subset \R^3$ as on the picture (b).
Hence $X,Y$ are bi-Lipschitz outer equivalent, however from topological reasons they are not ambient bi-Lipschitz equivalent in $\R^3$.
Hence  Theorem \ref{semialgebraic1} does not work for $n=3$ and $k=1.$
}

\begin{center}
\includegraphics[scale=.4]{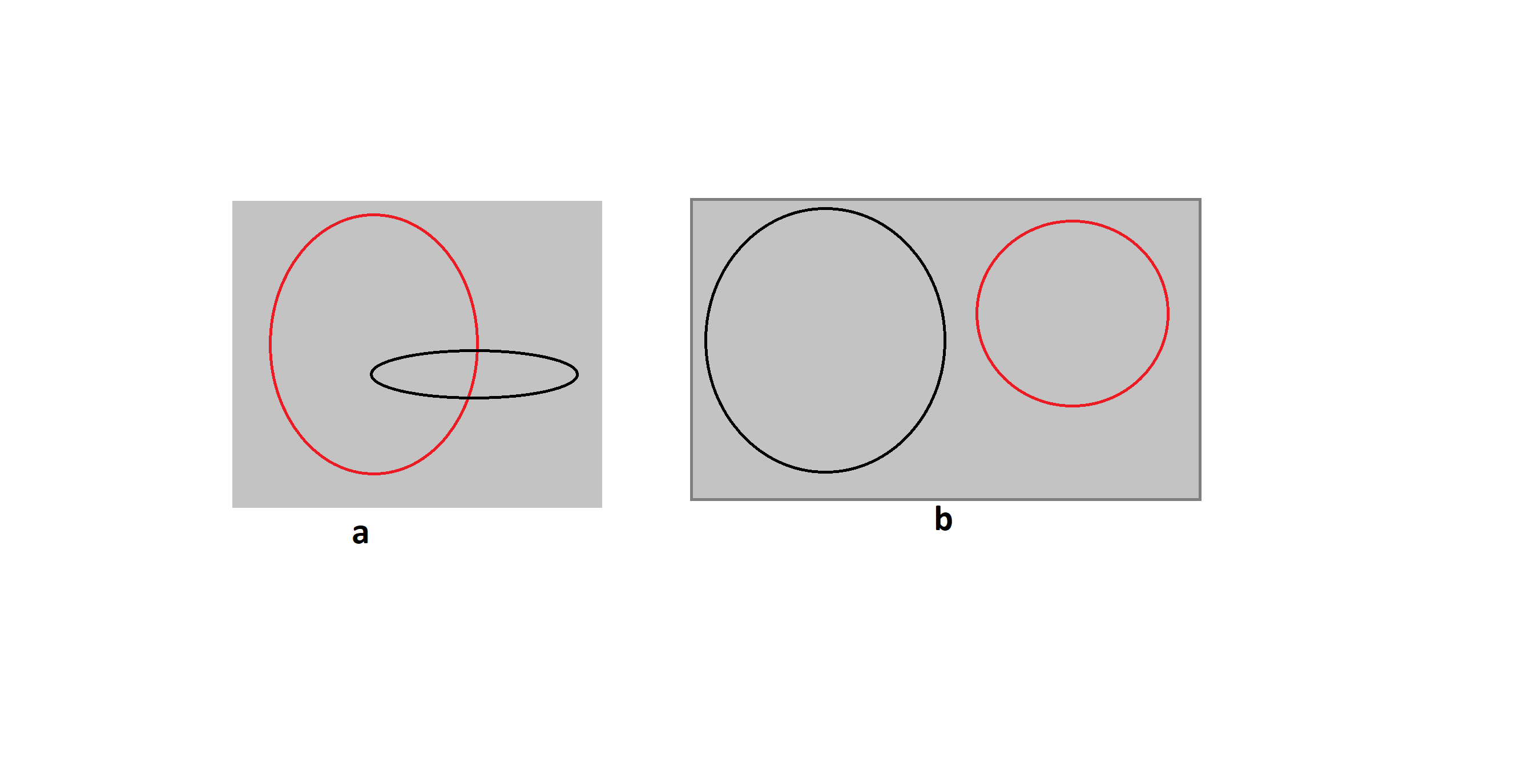}
\end{center}

{\rm It also does not work for $n=2$ and $k=1.$ Indeed,  in  $\R^2$ there are compact semialgebraic curves which are bi-Lipschitz outer equivalent and topologically ambient equivalent but not ambient bi-Lipchitz equivalent. Indeed let $X\subset \R^2$ be semi-algebraic curves as on the picture (a)  below and $Y\subset \R^2$ as on the picture (b) below. Here red curves have order of contact different than blue curves. 
Hence $X,Y$ are bi-Lipschitz outer equivalent and topologically ambient equivalent, however  they are not ambient bi-Lipschitz equivalent in $\R^2$.}

\begin{center}
\includegraphics[scale=.4]{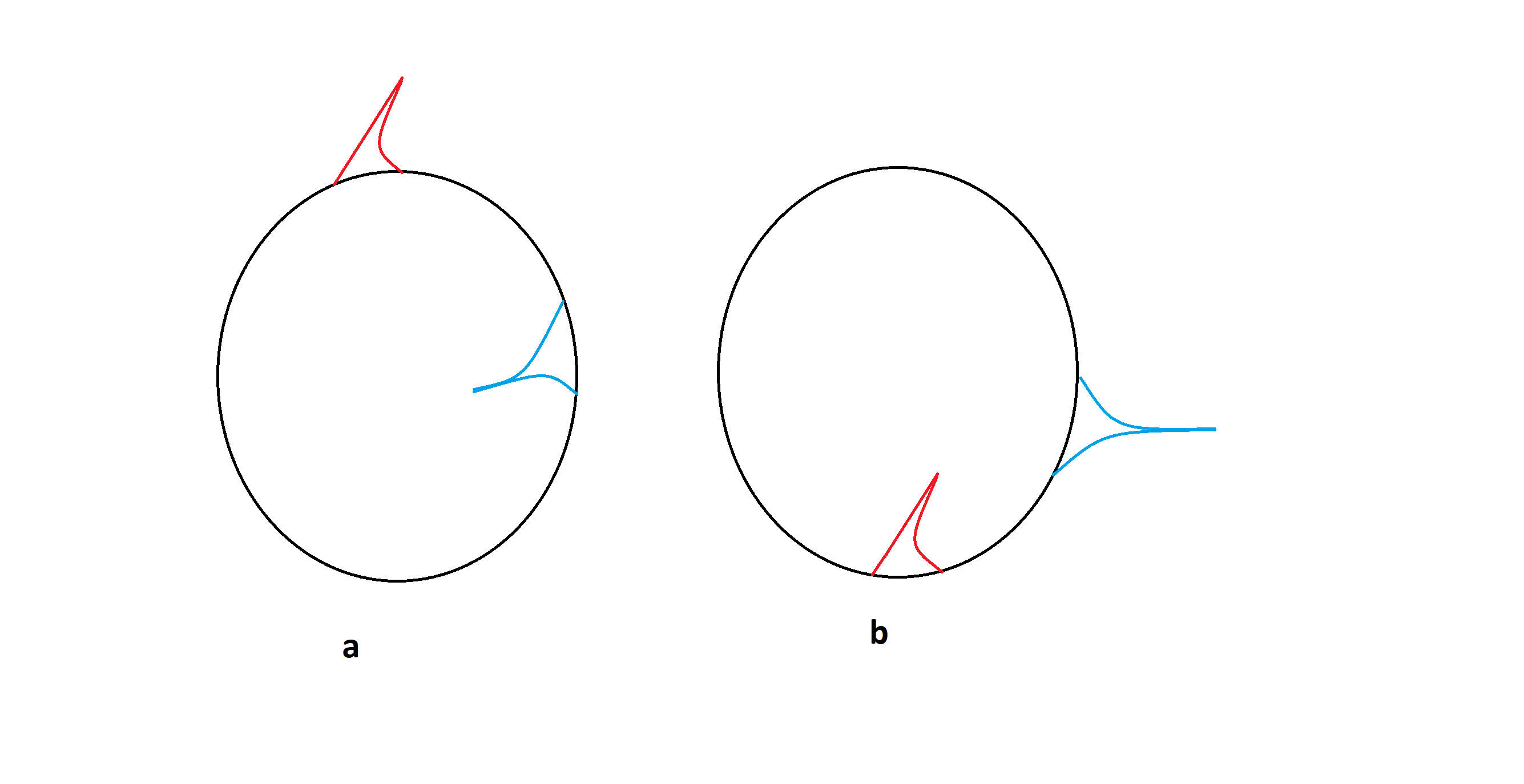}
\end{center}

\end{ex}

Hence Theorem \ref{semialgebraic1} is sharp for $k=1.$ Moreover, there are germ of surfaces $(X,0)$ and $(Y,0)$ in $\R^n$ (where $n=3$ or $n=4$)
and a semialgebraic bi-Lipschitz mapping $f:(X,0)\to (Y,0)$ which can be extended to homeomorphism of $(\R^n,0)$ but it can not be extended to a bi-Lipschitz  homeomorphism of $(\R^n,0)$ (see \cite{lev}). Hence our theorem  is nearly sharp also for $k=2.$
Let us note that we can obtain in a similar way a complex version of Theorem \ref{zan} and Theorem \ref{semialgebraic1}:

\begin{theo}\label{zan1}
Let $X$ be a complex affine algebraic  variety  of dimension $k.$
Then there is a  bi-Lipschitz (and regular) embedding  $f: X\to \C^{2k+1}$ such that the set $f(X)$ is an affine algebraic variety.
Moreover, this embedding is unique up to a bi-Lipchitz semialgebraic homeomorphism of $\C^{2k+1}.$
\end{theo}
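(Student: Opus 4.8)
The plan is to prove existence by a complex analogue of the Whitney-type Lemma \ref{whitney}, using \emph{complex-linear} projections, and then to obtain uniqueness as a direct application of the real uniqueness Theorem \ref{semialgebraic1}, viewing $X$ as a real semialgebraic set of real dimension $2k$.

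\textbf{Existence.} Suppose $X\subset \C^N$ with $N>2k+1$. I would imitate the proof of Lemma \ref{whitney}, replacing the real projective space of directions by $\mathbb{CP}^{N-1}$. Consider the secant map $A:X\times X\setminus \Delta\ni (x,y)\mapsto [x-y]\in \mathbb{CP}^{N-1}$. Since $X\times X\setminus\Delta$ has complex dimension at most $2k$ and $A$ is regular, $\Lambda:=A(X\times X\setminus\Delta)$ is constructible of complex dimension $\le 2k$, so its (Zariski) closure $\Sigma$ is an algebraic subvariety of $\mathbb{CP}^{N-1}$ of complex dimension $\le 2k$; as in the real case $\Sigma$ also contains all points at infinity of $X$. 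Because $N-1>2k$ we have $\mathbb{CP}^{N-1}\setminus\Sigma\neq\emptyset$, so I can pick a complex direction $P$ there. The complex-linear projection $\pi_P:\C^N\to\C^{N-1}$ along the line $P$, restricted to $X$, is injective (as $P\notin\Lambda$) and, since $P$ avoids the points at infinity of $X$, a finite regular morphism; hence $\pi_P(X)$ is again an affine algebraic variety of complex dimension $k$. Iterating this step lowers the ambient dimension to $2k+1$ and produces a regular bi-Lipschitz embedding $f:X\to\C^{2k+1}$ with $f(X)$ affine algebraic.

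The point requiring care, and the main obstacle, is the bi-Lipschitz estimate for $\pi_P$. Let $p$ be a unit vector spanning $P$ (Hermitian norm) and let $\pi_P$ denote the Hermitian-orthogonal projection onto $P^\perp$. For $v=x-y$ one has $\|\pi_P(v)\|^2=\|v\|^2-|\langle v,p\rangle|^2$, where the ratio $|\langle v,p\rangle|/\|v\|=\cos d_{FS}([v],P)$ measures the Fubini--Study distance in $\mathbb{CP}^{N-1}$ between the complex line $[v]$ and $P$. Since $\Sigma$ is compact and $P\notin\Sigma$, this distance is bounded below by some $\delta>0$ over all secants, so $|\langle v,p\rangle|\le (\cos\delta)\|v\|$ and therefore $\|x-y\|\ge\|\pi_P(x)-\pi_P(y)\|\ge(\sin\delta)\|x-y\|$. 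This is the exact complex counterpart of the angle estimate $\sin(a(P,S))\ge\epsilon$ used in Lemma \ref{whitney}, and it closes the existence argument after checking that the secant variety of the image behaves as in the inductive step there.

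\textbf{Uniqueness.} Here I would not argue in the holomorphic category at all, but regard $X$ as a real semialgebraic set of real dimension $2k$, so that $\C^{2k+1}$ is identified with $\R^{4k+2}$ and $4k+2=2(2k)+2$. Given two regular bi-Lipschitz embeddings $f,g:X\to\C^{2k+1}$ with algebraic images, the composite $g\circ f^{-1}:f(X)\to\R^{4k+2}$ is a semialgebraic bi-Lipschitz embedding of the $2k$-dimensional real semialgebraic set $f(X)$. Applying Theorem \ref{semialgebraic1} with $n=4k+2$ yields a tame bi-Lipschitz semialgebraic homeomorphism $\Phi:\R^{4k+2}\to\R^{4k+2}$ with $\Phi|_{f(X)}=g\circ f^{-1}$, hence $g=\Phi\circ f$. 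This is precisely the asserted uniqueness up to a bi-Lipschitz semialgebraic homeomorphism of $\C^{2k+1}$; note that $\Phi$ is only real semialgebraic, not holomorphic, which is all the statement requires.
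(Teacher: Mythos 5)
Your proposal is correct and follows exactly the route the paper takes: the paper's own proof is just the two-sentence remark that existence is done ``similarly as in Lemma \ref{whitney}'' and uniqueness ``follows directly from Theorem \ref{semialgebraic1},'' and you have filled in precisely those two steps (the complex projection/angle estimate for existence, and the identification $\C^{2k+1}=\R^{4k+2}$ with real dimension $2k$ so that $4k+2=2(2k)+2$ for uniqueness). No discrepancies to report.
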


\begin{proof}
The first part of theorem can be done similarly as in Lemma \ref{whitney}. We left details to the reader.
The second part follows directly from Theorem  \ref{semialgebraic1}.
\end{proof}

\begin{ex}{\rm
Theorem \ref{zan1}  is sharp. Indeed we show that there exists an affine curve $\Gamma$ without self-intersections, which cannot be embedded
in a bi-Lipschitz way into $\C^2$ as an algebraic curve. Note that such a curve in an obvious way has a continuous embedding into $\C^2.$ Let $\Gamma'$ be a rational curve with nine cusps (such a curve can be constructed by gluing a punctured cusp $\{x^2=y^3\}$ with a punctured $\Bbb P^1$ several times). It is a projective curve. Let $O\in \Gamma'$ be a smooth point and take $\Gamma=\Gamma'\setminus O.$ Fix an embedding $\Gamma\subset \C^N$ and consider $\Gamma$ with the induced metric structure from $\C^N.$ Hence $\Gamma$ is an affine curve with nine cusps. Assume that there is a bi-Lipschitz embedding $\phi :\Gamma \to \C^2$ and $\phi(\Gamma)=\Lambda$ is an algebraic curve. By \cite{BFLS} the curve $\Lambda$ has also nine cusps, hence the projective closure of $\Lambda$ in $\Bbb P^2$ is a rational cuspidal curve with at least nine cusps- this is a contradiction - see  \cite{tono},   Thm. 1.1, p. 216.}
\end{ex}

\section{Local case}
Here we consider the local case. 

\begin{defi}\label{embedding1}
Let $(X,0)$ be closed (semialgebraic) subset of $(\R^n,0)$ and let $f:(X,0)\to (\R^n,0)$ be a
Lipschitz mapping. We say that $f$ is a  bi-Lipschitz (semialgebraic) embedding if

1) $f((X,0))=(Y,0)$ is a germ of  closed semialgebraic subset of $\R^n$,

2) the mapping $f : (X,0)\to (Y,0)$ is a bi-Lipschitz and semialgebraic isomorphism, i.e., $f$
and $f^{-1}$ are Lipschitz and semialgebraic mappings.
\end{defi}

\noindent Let $X$ be a closed subset of $\R^n$. We will denote by
${\bf VS}_0(X)$ the vector space of all germs of Lipschitz  functions on $(X,0)$. If $f :
(X,0)\to (Y,0)$ is a Lipschitz mapping of closed sets, then we have
the natural homomorphism ${\bf VS}_0(f) : {\bf VS}_0(Y)\ni h\to h\circ
f\in {\bf VS}_0(X).$ 

If $(X,0)$ is a germ of closed semialgebraic subset of $\R^n$, then by ${\bf VS}_0(X)$ we will denote
the vector space of all germs of Lipschitz and semialgebraic   functions on $(X,0)$.

As in section 2 we have the following
fact:

\begin{pr}\label{semialgebraic3}
Let $(X,0)\subset (\R^m,0)$ be a closed semialgebraic subset and    $f:(X,0)\to (\R^n,0)$ be a Lipschitz semialgebraic  mapping.  The
following conditions are equivalent:

1) f is a bi-Lipschitz semialgebraic embedding,

2) the induced mapping ${\bf VS}_0(f):{\bf VS}_0(\R^n)\to {\bf VS}_0(X)$ is an
epimorphism.
\end{pr}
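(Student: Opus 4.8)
The plan is to mirror, germ by germ, the proof of Proposition \ref{semialgebraic} from Section 2; the only genuinely new ingredient is a local version of the semialgebraic Lipschitz extension theorem of \cite{AF}.

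First I would establish $1)\implies 2)$. Writing $(Y,0)=f((X,0))$ and letting $f':(X,0)\to(Y,0)$ denote the corestriction, the hypothesis that $f$ is a bi-Lipschitz semialgebraic embedding says exactly that ${\bf VS}_0(f'):{\bf VS}_0(Y)\to {\bf VS}_0(X)$ is an isomorphism. For the inclusion $\iota:(Y,0)\to(\R^n,0)$ I must show that ${\bf VS}_0(\iota):{\bf VS}_0(\R^n)\to {\bf VS}_0(Y)$ is an epimorphism, i.e. that every germ of a Lipschitz semialgebraic function on $(Y,0)$ is the restriction of such a germ on $(\R^n,0)$. Given a germ $h\in {\bf VS}_0(Y)$, I would pick a representative defined on the closed semialgebraic set $Y\cap \overline{B(0,\varepsilon)}$, extend it to a global Lipschitz semialgebraic function on $\R^n$ by \cite{AF}, and then pass back to germs to obtain the desired preimage. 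Since ${\bf VS}_0(f)={\bf VS}_0(f')\circ {\bf VS}_0(\iota)$ is the composition of an epimorphism followed by an isomorphism, it is itself an epimorphism.

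For $2)\implies 1)$, let $x_1,\dots,x_m$ be the coordinate germs on $(\R^m,0)$ restricted to $(X,0)$; these lie in ${\bf VS}_0(X)$. By the surjectivity hypothesis there are germs $H_i\in {\bf VS}_0(\R^n)$ with $x_i=H_i\circ f$ as germs on $(X,0)$. Setting $H=(H_1,\dots,H_m)$ yields $H\circ f=\mathrm{id}$ as a germ of a map $(X,0)\to(X,0)$. Hence $f$ is injective near $0$, its germ-inverse is the germ of the Lipschitz semialgebraic map $H$ restricted to $f(X)$, and $f(X)$ — being the image of a semialgebraic set under a semialgebraic map that is a homeomorphism onto its image — is a germ of a closed semialgebraic set. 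Thus $f$ is a bi-Lipschitz semialgebraic embedding in the sense of Definition \ref{embedding1}.

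The only point requiring real care is the germ-level extension step in $1)\implies 2)$: one must know that a germ of a Lipschitz semialgebraic function on $(Y,0)$ extends to a germ of a Lipschitz semialgebraic function on $(\R^n,0)$. I expect this to be the main (and essentially the only non-formal) obstacle, but it is supplied by applying the global semialgebraic extension result of \cite{AF} to the closed semialgebraic representative $Y\cap\overline{B(0,\varepsilon)}$ and then taking germs; everything else is a routine transcription of the argument in Section 2.
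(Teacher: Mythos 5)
Your proposal is correct and follows essentially the same route the paper intends: Proposition \ref{semialgebraic3} is stated with only the remark ``as in Section 2,'' and the expected argument is precisely the factorization ${\bf VS}_0(f)={\bf VS}_0(f')\circ {\bf VS}_0(\iota)$ for $1)\Rightarrow 2)$ and the coordinate-function/left-inverse construction $H=(H_1,\dots,H_m)$ for $2)\Rightarrow 1)$, transcribed to germs. Your identification of the germ-level semialgebraic Lipschitz extension (obtained by applying \cite{AF} to a closed representative $Y\cap\overline{B(0,\varepsilon)}$ and passing back to germs) as the only non-formal ingredient is exactly right.
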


\noindent 
The next Lemma is a  local version of Lemma \ref{whitney}

\begin{lem}\label{whitney2}
Let $(X,0)$ be a closed semialgebraic set of $(\R^n,0)$ of dimension $k.$ If $n>2k,$ then there exists a system of coordinates
$(x_1,...,x_n)$ such that the projection
$\pi : X \ni (x_1,...,x_n)\to
(x_1,...,x_{2k},0,...,0)\in \R^{2k}\times\{0\}$ gives a  bi-Lipschitz
embedding of $(X,0)$ into $(\R^{2k},0)$.
\end{lem}

\begin{proof}
We can assume that $X$ is a closed semialgebraic subset of a ball $B.$
Let us denote by $\pi_\infty$ the hyperplane at infinity of
$\R^n.$ Thus $\pi_\infty\cong \Bbb {RP}^{n-1}$ is a 
projective space of dimension $n-1\ge 2k.$ For a non-zero vector
$v\in \R^n$ let $[v]$ denote the corresponding point in $\Bbb
{RP}^{n-1}.$

Let $\Delta =\{ (x,y)\in  {X}\times {X} : x=y\}.$
 Consider a mapping $$A:
{X}\times {X}\setminus \Delta \ni (x,y)\to
[x-y]\in \pi_\infty.$$ 
Let $\Gamma'$ be the graph of $A$ in $B\times B\times {\Bbb {RP}^{n-1}}$ and take $\Gamma:=\overline{\Gamma'}$
(we take this closure in $ B\times B\times {\Bbb {RP}^{n-1}}$).  Let 
$p:\Gamma \to {\Bbb {RP}^{n-1}}$, 
$q:\Gamma \to B\times B$ be the projections.
Note that $q^{-1}(0,0)=Y$ is a semialgebraic set of dimension at most $2k-1.$ Let $Y'=p(Y).$ This is a closed subset of $\Bbb {RP}^{n-1}$ of dimension  at most $2k-1$, hence   $\Bbb {RP}^{n-1}\setminus Y' \not=\emptyset.$ Let $Q\in {\Bbb {RP}^{n-1}}\setminus Y'$, since $Y'$ is closed there exists a small ball $B_1\subset \Bbb {RP}^{n-1}$ with center at $Q$ such that $B_1\cap Y'=\emptyset.$

Let $V(\epsilon)$ be a small ball in $B\times B$ and let $R_\epsilon=q^{-1}(V(\epsilon)).$ Hence  $R_\epsilon$ is a neighborhood of $Y'$ in
$\Bbb {RP}^{n-1}$. We show that for $\epsilon$ sufficiently small, if $x,y\in V(\epsilon),\ x\not=y$, then $A(x,y)\not\in B_1.$
Indeed, in other case take $\epsilon=1/n \to 0.$ Hence for every $n\in \Bbb N$ we have points $x_n,y_n$ such that $A(x_n,y_n)\in B_1.$
But then $x_n,y_n\to 0$ and $\lim A(x_n,y_n)=\lim p((x_n,y_n, [x_n-y_n]))\in p(Y)=Y'.$ It is a contradiction. 

Hence there is an $\epsilon$ sufficiently small, such that  if  $x,y\in X\cap B(0,\epsilon):=X_\epsilon, \ x\not=y$, then $A(x,y)\not\in B_1.$ Let $\Sigma=A(X_\epsilon\times X_\epsilon\setminus \Delta).$ Then $Q\not\in \overline{\Sigma}.$
Now we finish exactly as in the proof of Lemma \ref{whitney}.
\end{proof}

A direct consequence of Lemma \ref{whitney2} is:

\begin{theo}\label{zan2}
Let $(X,0)$ be a germ of closed semialgebraic   set  of dimension $k.$
Then there is a bi-Lipschitz (and semialgebraic) embedding of $(X,0)$ into $(\R^{2k},0).$
\end{theo}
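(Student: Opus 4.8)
The plan is to derive this as an immediate corollary of the local Whitney-type projection result, Lemma \ref{whitney2}, exactly mirroring how Theorem \ref{zan} was obtained from Lemma \ref{whitney} in the global case. The statement to prove is:

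\begin{theo}
Let $(X,0)$ be a germ of closed semialgebraic set of dimension $k.$
Then there is a bi-Lipschitz (and semialgebraic) embedding of $(X,0)$ into $(\R^{2k},0).$
\end{theo}

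First I would observe that a germ of a closed semialgebraic set of dimension $k$ always admits, for some ambient dimension $n$, a representative that is a closed semialgebraic subset of $(\R^n,0)$; this is immediate from the definition of a semialgebraic germ. So we may fix such an $n$ with the germ realized inside $(\R^n,0).$

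Next, if $n > 2k$, then Lemma \ref{whitney2} applies directly: it furnishes a system of linear coordinates on $\R^n$ for which the orthogonal projection onto the first $2k$ coordinates, restricted to a sufficiently small representative $X_\epsilon$ of the germ, is a bi-Lipschitz (and semialgebraic) embedding of $(X,0)$ into $(\R^{2k},0).$ Iterating this one-step reduction — each application lowering the ambient dimension by at least one while preserving the bi-Lipschitz property, since a composition of bi-Lipschitz embeddings is bi-Lipschitz — we descend from $\R^n$ all the way down to $\R^{2k}$. If instead $n \le 2k$ from the start, there is nothing to do: the inclusion of $(X,0)$ into $(\R^n,0)\subset(\R^{2k},0)$ is already the desired embedding (here I use that $\R^n$ sits isometrically inside $\R^{2k}$ as a coordinate subspace). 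In all cases we land with a semialgebraic bi-Lipschitz embedding into $(\R^{2k},0)$.

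I expect essentially no obstacle here, as this theorem is a formal consequence of Lemma \ref{whitney2}: the entire analytic and geometric content — controlling the secant directions near the origin and extracting a projection center $Q$ outside the closure $\overline{\Sigma}$ of the set of secant directions of a small representative — has already been discharged in the proof of that lemma. The only point requiring a word of care is the passage from a single projection step (dimension $n$ to $n-1$, or $n$ to $2k$) to the full descent: one must note that bi-Lipschitz embeddings compose, and that after each projection the image is again a germ of a closed semialgebraic set of dimension $k$ (its dimension cannot increase, and injectivity of the bi-Lipschitz projection preserves it), so that Lemma \ref{whitney2} may be reapplied. This bookkeeping is routine, and I would simply state the result as a direct consequence of Lemma \ref{whitney2}, leaving the inductive iteration to the reader exactly as the authors did in the global Theorem \ref{zan}.
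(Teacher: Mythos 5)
Your proposal matches the paper exactly: the authors give no separate proof, presenting Theorem \ref{zan2} as ``a direct consequence of Lemma \ref{whitney2},'' which is precisely your derivation (with the iteration over projections already packaged inside the lemma's own proof, so your descent discussion and the trivial case $n\le 2k$ are just harmless extra bookkeeping). The proposal is correct and takes essentially the same route as the paper.
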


Let us note that we can obtain in a similar way a complex version of Theorem \ref{zan2}:

\begin{theo}\label{zan3}
Let $(X,0)$ be a germ of closed complex algebraic  affine variety  of dimension $k.$
Then there is a bi-Lipschitz (and regular) embedding of $f:(X,0)\to(\C^{2k},0),$ such that $(f(X),0)$ is a germ of complex algebraic variety.
\end{theo}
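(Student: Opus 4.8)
The plan is to transcribe the proof of Lemma \ref{whitney2} into the complex regular category, replacing the real projective space $\Bbb {RP}^{n-1}$ of secant directions by the complex projective space $\Bbb {CP}^{n-1}$ of directions at infinity of $\C^n$ and replacing every real dimension by a complex dimension; the embedding itself will be manufactured by a sequence of generic linear projections, each of which is automatically regular and Lipschitz, so that the composite $f$ is the restriction of a $\C$-linear map.

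First I would set up, exactly as in Lemma \ref{whitney2}, the secant-direction map $A:X\times X\setminus\Delta\ni(x,y)\mapsto[x-y]\in\Bbb {CP}^{n-1}$, form the graph $\Gamma'$ and its closure $\Gamma$ in $B\times B\times\Bbb {CP}^{n-1}$, and consider the projections $p:\Gamma\to\Bbb {CP}^{n-1}$ and $q:\Gamma\to B\times B$. The central dimension count is that the set of limiting secant directions at the origin, $Y':=p(q^{-1}(0,0))$, has complex dimension at most $2k-1$. I would get this by a clean frontier argument: $\Gamma'$ is isomorphic to $X\times X\setminus\Delta$, hence has complex dimension $2k$, and $q$ is injective on $\Gamma'$; since $(0,0)\in\Delta$ we have $q^{-1}(0,0)\subset\Gamma\setminus\Gamma'$, and the frontier of a $2k$-dimensional (complex) algebraic set has complex dimension at most $2k-1$. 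Thus $\dim_\C\overline{Y'}\le 2k-1$.

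Because $n\ge 2k+1$ gives $\dim_\C\Bbb {CP}^{n-1}=n-1\ge 2k>2k-1$, I can choose a direction $Q\in\Bbb {CP}^{n-1}\setminus\overline{Y'}$ together with a small ball $B_1$ about $Q$ disjoint from $\overline{Y'}$. The localization step is then copied verbatim from Lemma \ref{whitney2}: using $Q\notin p(q^{-1}(0,0))$ one finds $\epsilon>0$ so that all secants of $X_\epsilon=X\cap B(0,\epsilon)$ avoid $B_1$, and the sine-of-the-angle estimate from the proof of Lemma \ref{whitney} (which uses only the underlying Euclidean structure of $\C^n\cong\R^{2n}$) shows that the linear projection with center $Q$ restricts to a bi-Lipschitz map on $X_\epsilon$. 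Iterating the projection as long as the ambient dimension $m$ satisfies $m\ge 2k+1$ lowers the dimension one step at a time and terminates at $\C^{2k}$, producing the required bi-Lipschitz regular embedding $f:(X,0)\to(\C^{2k},0)$; this is exactly the one-dimensional improvement over the global Theorem \ref{zan1}, coming from the drop of the limiting-secant locus at a single point from complex dimension $2k$ to $2k-1$.

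The step I expect to be the main obstacle is not the dimension count but verifying that the generic projection is a genuinely \emph{regular} embedding, i.e.\ that $(f(X),0)$ is a germ of complex algebraic variety and $f$ is biregular onto it, rather than merely a bijective finite regular map (as for the normalization of a cusp). The resolution is that avoiding the \emph{whole} closure $\overline{Y'}$ removes the genuine secant directions (forcing injectivity) and the tangent directions at $0$ simultaneously, so the projection is both injective and a local analytic isomorphism near $0$; together with finiteness of the projection this upgrades it to a biregular germ isomorphism onto a germ of subvariety. I would therefore spend most of the care on this immersivity/biregularity point, leaving the metric estimates and the inductive bookkeeping to the reader as in the proof of Theorem \ref{zan1}.
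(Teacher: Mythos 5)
Your proposal is correct and takes essentially the same route the paper intends: the paper offers no written proof of Theorem \ref{zan3}, only the remark that it follows ``in a similar way'' from Lemma \ref{whitney2}, and your complexified transcription of that lemma --- the frontier argument giving $\dim_{\C} Y'\le 2k-1$ for the limiting secant directions at the origin, the choice of a center $Q\in\Bbb{CP}^{n-1}\setminus\overline{Y'}$ (possible since $n-1\ge 2k$), the localization to $X_\epsilon$, and the iterated projection down to $\C^{2k}$ --- is exactly that adaptation, with the same one-dimensional gain over Theorem \ref{zan1}. Your closing concern about biregularity of the inverse is more than the statement requires (only $f$ is asserted to be regular; the inverse need only be Lipschitz), so it does not affect correctness.
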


Now we are in a position to prove the   counterpart of  Theorem \ref{glowne01}. The proof is similar  and we left it to the reader.

\begin{theo} 
Let $(X,0)$ be a  closed semialgebraic subset of $(\R^n,0)$ of dimension
$k.$ Let $f : (X,0) \to (\R^n,0)$ be a  bi-Lipschitz embedding. If $n\ge 4k,$ then
there exists a germ of tame bi-Lipchitz  homeomorphism $F : (\R^n,0)\to(\R^n,0)$ such that
$$F|_X = f.$$ Moreover, there is a continuos and semialgebraic  family of germs of tame bi-Lipchitz and semialgebraic homeomorphisms 
$F_t: (\R^n,0)\times\R\to(\R^n,0),$ such that $F_0=identity \ and \ F_1|_X = f.$
\end{theo}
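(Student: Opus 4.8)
The plan is to transcribe the proof of Theorem \ref{glowne01} into the category of germs, replacing the global Whitney lemma (Lemma \ref{whitney}) by its local counterpart (Lemma \ref{whitney2}). The one numerical difference is that Lemma \ref{whitney2} embeds a germ of dimension $k$ into $\R^{2k}$ rather than $\R^{2k+1}$; this removes one coordinate from each of the two factors and is exactly what turns the global bound $n\ge 4k+2$ into the local bound $n\ge 4k$.

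First I would record the germ versions of Lemma \ref{proj} and Lemma \ref{wyk}, whose proofs are word-for-word identical. For the germ analog of Lemma \ref{proj}: if $\pi:(X,0)\to\R^{l}\times\{0\}$ is a bi-Lipschitz embedding of germs, then by Proposition \ref{semialgebraic3} (respectively its non-semialgebraic germ counterpart, using that germs of Lipschitz functions extend, see \cite{K}, \cite{Mc}) the induced map on germs of Lipschitz functions is onto, so for each $s>l$ there is a germ $p_s$ with $x_s=p_s(x_1,\dots,x_l)$ on $(X,0)$, and $\Pi(x)=(x_1,\dots,x_l,x_{l+1}-p_{l+1},\dots,x_n-p_n)$ is a germ of tame bi-Lipschitz homeomorphism of $(\R^n,0)$ restricting to $\pi$. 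For the germ analog of Lemma \ref{wyk}: if $(X,0)\subset L^{s}$ and $(Y,0)\subset H^{n-s}$ with $L^{s}\cap H^{n-s}=\{0\}$ and $f:(X,0)\to(Y,0)$ a bi-Lipschitz isomorphism, one passes to $\Gamma=graph(f)\subset L\times H$, observes via Lemma \ref{wykres} that the projection off $\Gamma$ is a bi-Lipschitz embedding, and factors $f=\Pi\circ\Psi|_{(X,0)}$ with $\Psi(x,y)=(x,y+f'(x))$ a tame shear and $\Pi$ the germ extension just produced.

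With these in place the argument is immediate. Apply Lemma \ref{whitney2} to the semialgebraic germs $(X,0)$ and $f((X,0))$ (both semialgebraic, exactly as in Theorem \ref{glowne01}), and then the germ version of Lemma \ref{proj}, to obtain germs of tame bi-Lipschitz homeomorphisms $A,B$ of $(\R^n,0)$ with $A(X)\subset\R^{2k}\times\{0\}$ and $B(f(X))$ lying in the last $2k$ coordinates; composing with determinant-one affine maps, we may replace $f$ by $B\circ f\circ A^{-1}$ and thus assume $(X,0)\subset \R^{2k}\times\{0\}=:L^{2k}$ while $f((X,0))$ lies in the last $2k$ coordinates. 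Here is where $n\ge 4k$ enters: the first $2k$ and the last $2k$ coordinate axes are then disjoint, so $f((X,0))\subset H^{n-2k}$ (the span of the last $n-2k$ coordinates) with $L^{2k}\cap H^{n-2k}=\{0\}$. The germ version of Lemma \ref{wyk} now produces the germ of tame bi-Lipschitz homeomorphism $F$ with $F|_X=f$. The isotopy is obtained exactly as in Theorem \ref{glowne01}: $F$ is a composition of triangular shears and determinant-one linear maps, each Lipschitz isotopic to the identity by scaling its shift term by $t$, and assembling these isotopies yields the germ family $F_t$ with $F_0=identity$ and $F_1|_X=f$.

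The main, and essentially the only nontrivial, point is therefore the dimension count: the disjointness $L^{2k}\cap H^{n-2k}=\{0\}$ demanded by Lemma \ref{wyk} forces $2k+2k\le n$, which is precisely $n\ge 4k$. Everything else is the literal germ-level reading of the global proofs, so the care required is only to confirm that the shears $\Pi$ and $\Psi$ are genuine germs of tame homeomorphisms of $(\R^n,0)$ and that their isotopies to the identity can be taken as germs, both clear since all the functions involved are germs of Lipschitz functions.
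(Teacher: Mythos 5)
Your proposal is correct and follows exactly the route the paper intends: the paper omits this proof, stating only that it is ``similar'' to that of Theorem \ref{glowne01}, and your argument is precisely that transcription, with Lemma \ref{whitney2} replacing Lemma \ref{whitney} so that the two $(2k+1)$-dimensional coordinate subspaces become $2k$-dimensional ones, turning the disjointness requirement of Lemma \ref{wyk} from $n\ge 4k+2$ into $n\ge 4k$. The germ versions of Lemmas \ref{proj} and \ref{wyk} and of the isotopy argument that you record are exactly the routine verifications the authors leave to the reader.
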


\begin{co}
Let $(X,0)$ be a germ of closed semialgebraic 
set   of dimension $k.$ 
If  $n \ge 4k$, then $f$ has  a unique bi-Lipschitz embedding into $\R^n$ as semialgebraic set 
(up to a germ of a bi-Lipschitz homeomorphism of $\R^n$).
\end{co}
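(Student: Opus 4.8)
The plan is to derive this statement as a formal corollary of the two preceding local results, exactly mirroring the argument used for the global Corollary following Theorem~\ref{glowne01}. There are two ingredients to assemble: the \emph{existence} of at least one bi-Lipschitz semialgebraic embedding of the germ $(X,0)$ into $(\R^n,0)$, and the fact that any two such embeddings are intertwined by a germ of ambient homeomorphism. I expect the whole argument to be short, since the real work has already been done in Theorem~\ref{zan2} and in the local extension theorem stated just above.

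For existence, I would invoke Theorem~\ref{zan2}, which produces a bi-Lipschitz and semialgebraic embedding of $(X,0)$ into $(\R^{2k},0)$. Since the hypothesis $n\ge 4k$ gives in particular $n\ge 2k$, composing that embedding with the standard inclusion $(\R^{2k},0)\hookrightarrow(\R^n,0)$ (whose image $\R^{2k}\times\{0\}$ is closed and semialgebraic) yields a bi-Lipschitz semialgebraic embedding of $(X,0)$ into $(\R^n,0)$.

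For uniqueness, let $g_1,g_2:(X,0)\to(\R^n,0)$ be any two bi-Lipschitz semialgebraic embeddings, with semialgebraic image germs $(Y_1,0):=(g_1(X),0)$ and $(Y_2,0):=(g_2(X),0)$, both of dimension $k$. I would form the germ map
$$f:=g_2\circ g_1^{-1}:(Y_1,0)\to(\R^n,0),$$
which, as a composition of bi-Lipschitz semialgebraic isomorphisms, is again a bi-Lipschitz semialgebraic embedding of a $k$-dimensional germ, now with source $(Y_1,0)\subset(\R^n,0)$. Applying the preceding Theorem (the local counterpart of Theorem~\ref{glowne01}) to the germ $(Y_1,0)$ and to $f$, and using $n\ge 4k$, I obtain a germ of tame bi-Lipschitz semialgebraic homeomorphism $F:(\R^n,0)\to(\R^n,0)$ with $F|_{Y_1}=f$. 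Then for $x\in X$ we have $F(g_1(x))=f(g_1(x))=g_2(g_1^{-1}(g_1(x)))=g_2(x)$, i.e. $F\circ g_1=g_2$, so the two embeddings differ only by the ambient homeomorphism $F$.

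I do not anticipate any genuine obstacle: the statement is a direct consequence of the existence theorem (Theorem~\ref{zan2}) and the extension theorem stated immediately before it. The only point requiring care is the bookkeeping of the dimension bounds, namely that the single hypothesis $n\ge 4k$ simultaneously supplies $n\ge 2k$ (needed to realize the embedding from Theorem~\ref{zan2} inside $\R^n$) and the exact threshold $n\ge 4k$ demanded by the extension theorem (needed to intertwine the two embeddings); both hold by assumption, which is why this is the correct range for the local uniqueness statement.
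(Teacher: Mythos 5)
Your proposal is correct and follows exactly the route the paper intends: existence of an embedding comes from Theorem~\ref{zan2} (composed with the inclusion $\R^{2k}\hookrightarrow\R^n$, using $n\ge 4k\ge 2k$), and uniqueness comes from applying the preceding local extension theorem to $g_2\circ g_1^{-1}$ on the image germ $(g_1(X),0)$; this mirrors the one-line proof the paper gives for the analogous global corollary after Theorem~\ref{glowne01}. No issues.
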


In the sequel we need following version of Lemma \ref{glowne}:

\begin{lem}\label{glowne11}
Let $(X,0)\subset (\R^{2n},0)$ be  a germ of a closed semialgebraic set of dimension $k$ and $n\ge 2k.$ 
Assume that mappings
$$\pi_1 : (X,0)\ni (x_1,...,x_n,y_1,...,y_n)\to  (x_1,...,x_n)\in
(\R^n,0)$$ and
$$\pi_2 : (X,0)\ni (x_1,...,x_n,y_1,...,y_n)\to  (y_1,...,y_n)\in (\R^n,0)$$

\noindent are bi-Lipschitz embeddings. Then there exist linear isomorphisms $S,
T $ of $\R^n$ with determinant $1$,  such that if we change coordinates in $\R^n\times
\{0\}$ by $(z_1,...,z_n)=S(x_1,...,x_n)$ and we change coordinates
in $ \{0\}\times \R^n$ by $(w_1,...,w_n)=T(y_1,...,y_n)$, then all
projections

$$ q_r : (X,0)\ni (z_1,...,z_n,w_{1},...,w_n)\to  (z_1,...,z_{r},w_{r+1},...,w_n)\in (\R^n,0),
 \ r=0,..., n$$

\noindent are bi-Lipschitz embeddings.
\end{lem}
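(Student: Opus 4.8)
The strategy is to mimic the proof of Lemma \ref{glowne} essentially verbatim, replacing the global dimension bounds by their local analogues. The crucial difference is that in the local case the relevant set of secant directions at the origin, $\Sigma$, has dimension at most $2k-1$ rather than $2k$ (as established in the proof of Lemma \ref{whitney2}), so the numerical hypothesis can be relaxed from $n \ge 2k+1$ to $n \ge 2k$.

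First I would set up the secant-direction variety in the local setting. Working in $B \times B$ for a small ball $B$, I consider the mapping $A : (X,0) \times (X,0) \setminus \Delta \ni (x,y) \mapsto [x-y] \in \pi_\infty \cong \Bbb P^{2n-1}$, form the graph $\Gamma' = \mathrm{graph}(A)$ inside $B \times B \times \Bbb P^{2n-1}$, take its closure $\Gamma$, and let $\Sigma \subset \pi_\infty$ be the image under the projection $p$ of the fiber over $(0,0)$ of $q : \Gamma \to B\times B$. As in Lemma \ref{whitney2}, for a sufficiently small ball $B(0,\epsilon)$ the set of secant directions of $X_\epsilon := X \cap B(0,\epsilon)$ accumulates only along this $\Sigma$, and by the dimension count of that lemma, $\dim \Sigma \le 2k-1$. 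The hypotheses that $\pi_1|_X$ and $\pi_2|_X$ are bi-Lipschitz embeddings give, via Lemma \ref{lemat}, that $\Sigma$ is disjoint from both coordinate linear subspaces $\{x_1=\cdots=x_n=0\}$ and $\{y_1=\cdots=y_n=0\}$ of $\pi_\infty$.

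Next I would carry out the generic-hyperplane induction exactly as in Lemma \ref{glowne}. Since $\dim \Sigma \le 2k-1$, a generic hyperplane through the pencil with base locus $L = \{x_1=\cdots=x_n=0\}$ meets $\Sigma$ in dimension at most $2k-2$; the first possibility (that some smooth stratum $S_i$ is contained in the hyperplane) is excluded exactly because that would force $S_i \subset L$, which is disjoint from $\Sigma$. Choosing $n$ generic coordinates $z_i = \sum_k a_{i,k} x_k$ I arrange that $\{z_1=\cdots=z_r=0\} \cap \Sigma$ has dimension at most $2k-1-r$ for each $r$, so in particular $\{z_1=\cdots=z_n=0\} \cap \Sigma = \emptyset$. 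Then I symmetrically introduce generic coordinates $w_j = \sum_k b_{j,k} y_k$ one at a time, arranging at each stage that the mixed linear subspace $\{z_1=\cdots=z_r=0,\, w_{r+1}=\cdots=w_n=0\}$ misses $\Sigma$. By Lemma \ref{lemat} each such disjointness statement says precisely that the corresponding projection $q_r|_X$ is a bi-Lipschitz embedding, and the transformations $S, T$ can be normalized to determinant $1$ as before.

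The main obstacle, and the only genuinely local point, is the dimension estimate $\dim \Sigma \le 2k-1$: one must verify that the closure of the secant directions at the origin is governed by the fiber $q^{-1}(0,0)$, whose dimension drops by one below the ambient $2k = \dim(X \times X)$ because the diagonal is blown up along $\Delta$. This is exactly the content that Lemma \ref{whitney2} already supplies, so I would cite that argument rather than reprove it. Everything downstream — the interplay of the generic-hyperplane choices with the stratification of $\Sigma$, and the translation of linear-subspace disjointness into bi-Lipschitz embeddings through Lemma \ref{lemat} — is then identical to the global case, and I would indicate this parallel and leave the routine verifications to the reader.
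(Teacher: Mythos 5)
Your overall architecture matches the paper's: set up the secant-direction map $A$, use the graph closure $\Gamma$ and the fibre $q^{-1}(0,0)$ to exploit the local dimension drop to $2k-1$, then run the generic-hyperplane induction of Lemma \ref{glowne} and translate disjointness at infinity into bi-Lipschitzness via Lemma \ref{lemat}. But there is a genuine gap in the step where you invoke Lemma \ref{lemat}. You define $\Sigma$ to be $Y'=p(q^{-1}(0,0))$, the set of \emph{limiting} secant directions at the origin, and correctly note $\dim Y'\le 2k-1$. However, Lemma \ref{lemat} characterizes bi-Lipschitzness of $\pi_L|_X$ in terms of the closure of the directions of \emph{all} secants of the chosen representative $X_\epsilon$ --- a set of dimension up to $2k$, which for $n=2k$ need not be avoided by your $(n-1)$-dimensional centers $\{z_1=\cdots=z_r=0,\ w_{r+1}=\cdots=w_n=0\}$ even after a generic choice of coordinates: cutting a $2k$-dimensional set by $n=2k$ generic hyperplanes leaves a finite, possibly nonempty, set. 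So the sentence ``each such disjointness statement says precisely that $q_r|_X$ is a bi-Lipschitz embedding'' is applied to the wrong set, and the conclusion does not follow for a fixed representative.

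The paper repairs exactly this point by carrying \emph{both} sets through the induction: the full secant closure $\Sigma$ (dimension $\le 2k$) and the limit set $Y'$ (dimension $\le 2k-1$). After the generic cuts, the intersection of the center with $\Sigma$ is finite and disjoint from $Y'$, and one then \emph{contracts the representative} of the germ: since each residual direction is not a limit of secant directions at the origin, it disappears from the secant closure of $X\cap B(0,\epsilon)$ for $\epsilon$ small (the closures $\overline{\Sigma_\epsilon}$ decrease with $\epsilon$ and their intersection is $Y'$). Your phrase ``accumulates only along this $\Sigma$'' gestures at this, but you never perform the shrinking, and without it the application of Lemma \ref{lemat} is unjustified. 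The fix is routine once identified --- add the contraction step at each stage of the induction (it must be done finitely many times, once per coordinate choice, which is harmless for a germ) --- but it is a necessary extra idea, not a ``routine verification identical to the global case.''
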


\begin{proof}
Let us denote by $\pi_\infty$ the hyperplane at infinity of $\R^n\times \R^n.$
Thus $\pi_\infty\cong \Bbb P^{2n-1}$ is a real projective
space of dimension $2n-1.$

We show that there are affine coordinates $(z_1,...,z_n)$ in
$\R^n\times \{0\}$ and affine coordinates $(w_1,...,w_n)$ in
$\{0\}\times \R^n$ such that all projection
$$ q_i : (X,0)\ni (z_1,...,z_n,w_{1},...,w_n)\to  (z_1,...,z_{n-i},w_{n-i+1},...,w_n)\in (\R^n,0),
 \ i=0,..., n$$
are embeddings. On $\pi_\infty$ we have coordinates $(x:y).$ 

Let $X\subset B(0,\epsilon)$ be so small closed representative of the germ $(X,0)$  such that 
projections ${\pi_1}_{|X} , {\pi_2}_{|X}$ are bi-Lipschitz and 
 we have by Lemma \ref{lemat} that $$\{(x:y)
\in\pi_\infty : x_1=0,...,x_n=0 \}\cap \Sigma=\emptyset$$ and  $$\{(x:y)
\in\pi_\infty : y_1=0,...,y_n=0 \}\cap \Sigma=\emptyset.$$
Let $\Delta =\{ (x,y)\in  {X}\times {X} : x=y\}.$
 Consider a mapping $$A:
{X}\times {X}\setminus \Delta \ni (x,y)\to
[x-y]\in \pi_\infty.$$ 
Let $\Gamma'$ be the graph of $A$ in $B\times B\times {\Bbb {RP}^{n-1}}$ and take $\Gamma:=\overline{\Gamma'}$
(we take this closure in $ B\times B\times {\Bbb {RP}^{n-1}}$).  Let 
$p:\Gamma \to {\Bbb {RP}^{n-1}}$, 
$q:\Gamma \to B\times B$ be the projections.
Note that $q^{-1}(0,0)=Y$ is a semialgebraic set of dimension at most $2k-1.$ Let $Y'=p(Y).$ This is a closed subset of $\Bbb {RP}^{n-1}$ of dimension  at most $2k-1$.

Since $\Sigma$ is a semialgebraic set of dimension at most $2k$, if  $H=\{ x \in \pi_\infty
: \sum^n_{i=1} c_i x_i=0\}$ is a generic hyperplane, we see that
$\dim H\cap \Sigma\le 2k-1$ and $\dim H\cap Y'\le 2k-2.$ 

Continuing in this fashion we see that we can choose $n$ generic
hyperplanes given by equations: $z_i=\sum^n_{k=1} a_{i,k}x_k, \
i=1,...,n$, such that for every $1\le r \le n$ the sets  $(\{ z_1=0,..., z_r=0\})\cap \Sigma$ are
semialgebraic subset of $\pi_\infty$ of
dimension at most $2k-r.$ In particular, the set $ \{
z_1=0,..., z_{n}=0\}\cap Y'$ is empty and the set $S:= \{
z_1=0,..., z_{n}=0\}\cap \Sigma$ is at most finite. However, since $S$ is disjoint from $Y'$,  we contract  $X$ and 
 assume that also the set $S$ is empty.

Now in the same way we can choose  a generic hyperplane given by
the equation $w_n=\sum^n_{k=1} b_{n,k}y_k, \ i=1,...,n$, such that
$\{ z_1=0,..., z_{n-1}=0, w_n=0\}\cap Y'=\emptyset$  and  $w_n=\sum^n_{k=1} b_{n,k}y_k, \ i=1,...,n$, such that
$\{ z_1=0,..., z_{n-1}=0, w_n=0\}\cap \Sigma$ is finite, in fact we can assume as before that  this set is empty.

Additionally for every $0\le r\le n-1$ we have dim   $ \{
z_1=0,..., z_{r}=0, w_n=0\}\cap Y'\le 2k-r-2.$ and dim   $ \{
z_1=0,..., z_{r}=0, w_n=0\}\cap \Sigma\le 2k-r-1.$  
Further we can construct
$w_{n-1}=\sum^n_{k=1} b_{n-1,k}y_k, \ i=1,...,n$, such that
$\{ z_1=0,..., z_{n-2}=0, w_{n-1}=0, w_n=0,\}\cap \Sigma=\emptyset$
 and additionally for every $0\le r\le n-2$ we
have dim   $\{ w_1=0,..., w_{r}=0, z_{n-1}=0, z_n=0\}\cap \Sigma\le
2k-r-2.$  Continuing in this manner we find a system of
coordinates $(z_1,...,z_n,w_1,...,w_n)$ we are looking for: for
all $0 \le r\le n$ we have $$\Sigma\cap \{ z_1=0,..., z_r=0,
w_{r+1}=0,..., w_n=0\}=\emptyset,$$ which implies that the mapping
$$ q_r : (X,0)\ni (z_1,...,z_n,w_{1},...,w_n)\to  (z_1,...,z_{r},w_{r+1},...,w_n)\in (\R^n,0),
\ r=0,..., n$$
is a bi-Lipchitz embedding.  Moreover, we can always assume (by the construction)
that transformation $S: (x_1,..., x_n)\to (z_1,..., z_n)$ and $T:
(y_1,..., y_n)\to (w_1,..., w_n)$ have determinant $1.$
\end{proof}

Now we can repeat the proof of Theorem \ref{semialgebraic1} to get:

\begin{theo}\label{semialgebraic2}
Let $(X,0)$ be a  germ of a closed semialgebraic subset of $\R^n$ of dimension
$k.$ Let $f : (X,0) \to (\R^n,0)$ be a semialgebraic and bi-Lipschitz embedding. If $n\ge 2k+1,$ then
there exists a germ of tame bi-Lipchitz and semialgebraic homeomorphism $F : (\R^n,0)\to(\R^n,0)$ such that
$$F|_X = f.$$ Moreover, there is a continuos and semialgebraic  family of germs of tame bi-Lipchitz and semialgebraic homeomorphisms 
$F_t: (\R^n,0)\times\R\to(\R^n,0),$ such that $F_0=identity \ and \ F_1|_X = f.$
\end{theo}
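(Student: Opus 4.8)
The plan is to transcribe the proof of Theorem~\ref{semialgebraic1} essentially verbatim, replacing each global tool by the local one already prepared in this section; the improvement from $n\ge 2k+2$ to $n\ge 2k+1$ is bought entirely by the sharper local embedding dimension $2k$ of Lemma~\ref{whitney2} (against the global $2k+1$ of Lemma~\ref{whitney}). First I would apply Lemma~\ref{whitney2} to both germs $(X,0)$ and $(f(X),0)$, which have dimension $k$; since $n>2k$ this, together with a germ version of Lemma~\ref{proj}, yields germs of tame bi-Lipschitz semialgebraic homeomorphisms $A,B$ of $(\R^n,0)$ with $A(X)\subset\R^{2k}\times\{0\}$ and $B(f(X))\subset\{0\}\times\R^{2k}$. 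After replacing $f$ by $B\circ f\circ A^{-1}$ and adjusting by determinant-one affine maps, I may assume $X\subset\R^{2k}\times\{0\}$, $f(X)\subset\{0\}\times\R^{2k}$ and $n=2k+1$, so that $f=(0,f_1,\dots,f_{n-1})$ with $n-1=2k$.

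The only ingredient not explicitly recorded in this section is the germ analogue of Lemma~\ref{proj}, which I would state and prove first. Its proof is identical to the global one: a bi-Lipschitz projection $\pi$ of $(X,0)$ onto a coordinate subspace makes ${\bf VS}_0(\pi)$ an epimorphism by Proposition~\ref{semialgebraic3}, so each discarded coordinate $x_s$ equals a germ of a Lipschitz semialgebraic function of the surviving ones, and subtracting these functions produces a germ of a tame triangular homeomorphism of $(\R^n,0)$ extending $\pi$.

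Next I would form the graph $X'=\mathrm{graph}(f)\subset\R^{2k}\times\R^{2k}$ and apply Lemma~\ref{glowne11}, whose hypothesis holds here with equality. This furnishes determinant-one linear changes $S,T$ in the two $\R^{2k}$ factors after which every interpolating projection $q_r$, $r=0,\dots,n-1$, is a germ of bi-Lipschitz semialgebraic embedding on $X'$; precomposing with the graph embedding $x\mapsto(x,f(x))$ (a bi-Lipschitz embedding by Lemma~\ref{wykres}) gives the maps $q_r'$ interpolating between $X$ and $f(X)$. I would then run the same finite induction as in Theorem~\ref{semialgebraic1}: the embedding property of $q_r'$ lets Proposition~\ref{semialgebraic3} express one coordinate as a germ of a Lipschitz semialgebraic function $P_{r-1}$ (resp. $Q_{r-1}$) of the others, these define germs of tame triangular homeomorphisms $A_{r-1},B_{r-1}$ of $(\R^n,0)$, and the composition $S_1^{-1}\circ B_0\circ A_0\circ\cdots\circ B_{n-1}\circ A_{n-1}\circ T_1$ is a germ $F$ with $F|_X=f$. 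The isotopy $F_t$ is obtained exactly as in Theorem~\ref{glowne01}, by scaling the off-diagonal term of each triangular factor and writing each determinant-one linear factor as a product of triangular ones.

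I expect the main obstacle to be purely local bookkeeping, namely keeping every object defined near $0$. Concretely, one must shrink the representative of $(X,0)$ finitely many times so that the closure $\Sigma$ of the secant-direction set avoids the successively chosen projection centers --- this shrinking is already carried out inside Lemma~\ref{glowne11} --- and one must check that each triangular extension built above is genuinely a germ of bi-Lipschitz homeomorphism at $0$ rather than only near the relevant points of $X$. Once the germ form of Lemma~\ref{proj} is established, every remaining step is formally identical to the global proof.
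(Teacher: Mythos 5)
Your proposal is correct and is exactly what the paper intends: the paper's own ``proof'' of this theorem consists solely of the remark that one repeats the proof of Theorem \ref{semialgebraic1}, substituting the local Lemma \ref{whitney2} (embedding dimension $2k$) and Lemma \ref{glowne11} for their global counterparts, which is precisely the substitution you carry out, including the correct observation that the hypothesis of Lemma \ref{glowne11} is met with equality and that a germ version of Lemma \ref{proj} is needed and proved identically via Proposition \ref{semialgebraic3}.
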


\begin{co}\label{unique2}
Let $(X,0)$ be a germ of a closed semialgebraic  set  of dimension $k.$ If $n\ge
2k+1$, then $(X,0)$ has a  unique semialgebraic and bi-Lipchitz embedding into $(\R^n,0)$ (up to a germ of a semialgebraic 
and bi-Lipschitz homeomorphism of $(\R^n,0)$).
\end{co}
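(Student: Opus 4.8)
The plan is to deduce this statement directly from the existence result (Theorem \ref{zan2}) together with the extension result (Theorem \ref{semialgebraic2}), in complete analogy with the global case handled after Theorem \ref{glowne01}. The corollary asserts two things at once — that an embedding of $(X,0)$ into $(\R^n,0)$ exists, and that any two such embeddings are equivalent up to a germ of a semialgebraic bi-Lipschitz homeomorphism of $(\R^n,0)$ — and I would treat these two assertions separately.

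For existence, I would simply invoke Theorem \ref{zan2}, which produces a semialgebraic bi-Lipschitz embedding of $(X,0)$ into $(\R^{2k},0)$. Since $n\ge 2k+1>2k$, composing with the standard linear inclusion $\R^{2k}=\R^{2k}\times\{0\}\subset\R^n$ yields a semialgebraic bi-Lipschitz embedding of $(X,0)$ into $(\R^n,0)$, whose image germ is again closed and semialgebraic.

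For uniqueness, the key move is to reduce the comparison of two embeddings to a single application of the extension theorem. I would take two semialgebraic bi-Lipschitz embeddings $f,g:(X,0)\to(\R^n,0)$ and set $(X',0):=f((X,0))$, which is a germ of a closed semialgebraic subset of $(\R^n,0)$, still of dimension $k$ because $f$ is bi-Lipschitz. The composition $h:=g\circ f^{-1}:(X',0)\to(\R^n,0)$ is then a semialgebraic bi-Lipschitz embedding of $(X',0)$, so the hypotheses of Theorem \ref{semialgebraic2} are satisfied — with the role of the ambient set played by $X'$ rather than $X$, and with $n\ge 2k+1$ being precisely the inequality required. That theorem furnishes a germ of a tame — hence in particular semialgebraic and bi-Lipschitz — homeomorphism $\Phi:(\R^n,0)\to(\R^n,0)$ with $\Phi|_{X'}=h$. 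Evaluating on $X$ then gives $\Phi\circ f=(g\circ f^{-1})\circ f=g$, which is exactly the asserted equivalence of the two embeddings.

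I do not anticipate any genuine obstacle, since all of the substantive work has already been carried out inside Theorem \ref{semialgebraic2}; the only point deserving a moment's care is the bookkeeping in this reduction, namely checking that $(X',0)$ inherits the correct dimension $k$ and that $h=g\circ f^{-1}$ is genuinely semialgebraic and bi-Lipschitz (which it is, as a composition of the semialgebraic bi-Lipschitz maps $f^{-1}$ and $g$), so that the extension theorem is applied to the pair $(X',h)$ and not naively to $(X,0)$ itself.
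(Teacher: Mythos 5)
Your proposal is correct and follows exactly the route the paper intends: existence from Theorem \ref{zan2} via the inclusion $\R^{2k}\subset\R^n$, and uniqueness by applying Theorem \ref{semialgebraic2} to $h=g\circ f^{-1}$ on the image germ $(f(X),0)$, which is the same reduction the paper uses for the analogous global corollaries. The bookkeeping you flag (dimension of the image, semialgebraicity of the composition) is handled correctly, so there is nothing to add.
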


We show below that  Theorem \ref{semialgebraic2} is sharp for $k=1.$ Moreover, there are germ of surfaces $(X,0)$ and $(Y,0)$ in $\R^n$ (where $n=3$ or $n=4$)
and a semialgebraic bi-Lipschitz mapping $f:(X,0)\to (Y,0)$ which can be extended to homeomorphism of $(\R^n,0)$ but it can not be extended to a bi-Lipschitz  homeomorphism of $(\R^n,0)$ (see \cite{lev}). Hence our theorem  is  sharp also for $k=2.$

\begin{ex}
{\rm Let $X\subset\R^2$ be the semialgebraic curve given in the picture (a) below. Here the red curve has the same  contact exponents with black curves  as the blue curve. Let $f$ be the identity, and $g$ exchange the blue curve with a red one. The mapping $g$ is bi-Lipschitz (see Theorem \ref{alex} below). From  obvious  reasons there is no any germ of homeomorphism $\Phi$ of $(\R^2,0)$ such that $\Phi\circ f=g.$}

\begin{center}
\includegraphics[scale=.4]{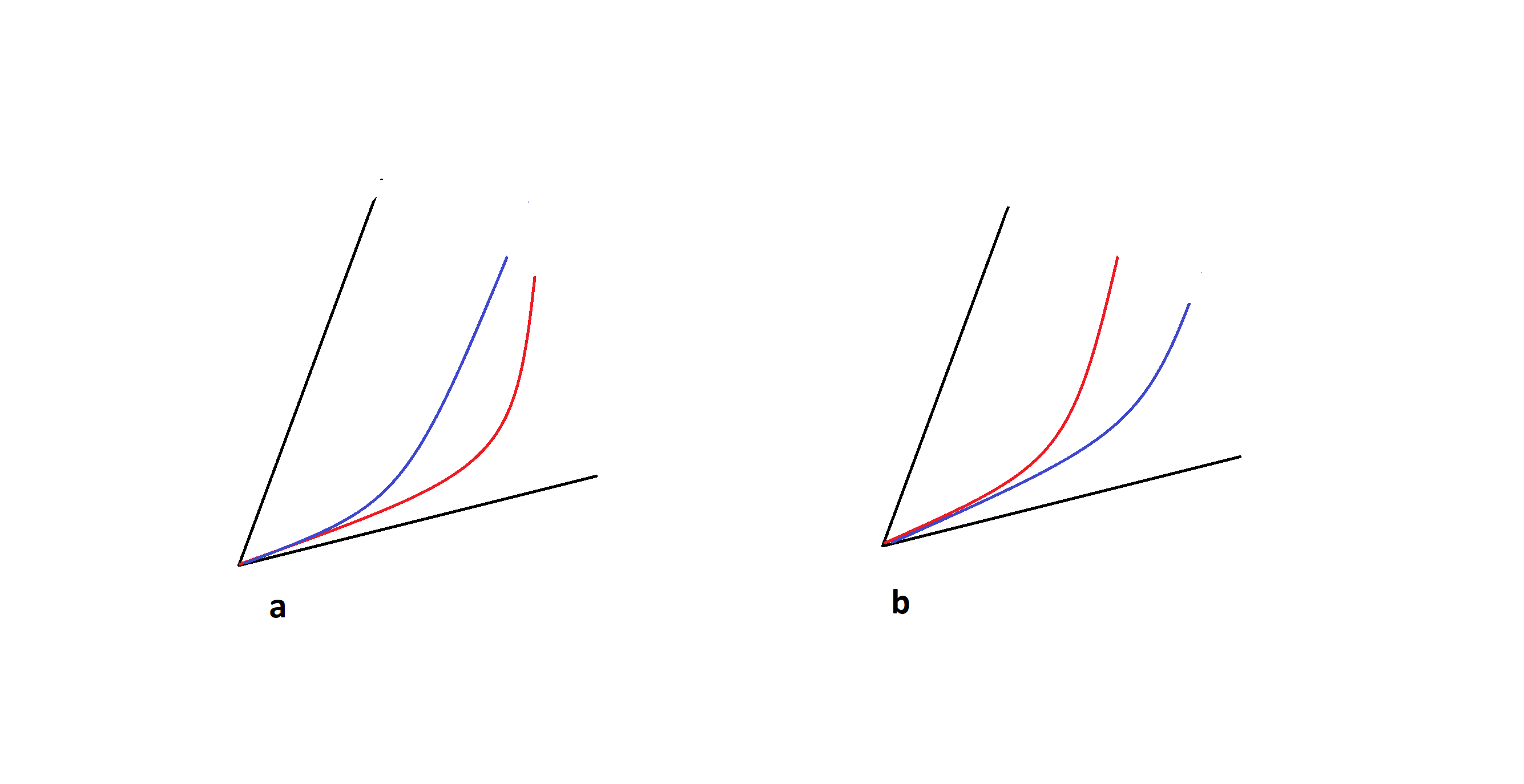}
\end{center}

{\rm However we show below (see Theorem \ref{local curves}) that an embedding of a germ of a curve into $(\R^2,0)$ has a weak uniqueness property: if $(Y_1,0)$, $(Y_2,0)$ are outer equivalent
then also $(Y_1,0)$, $(Y_2,0)$ are ambient equivalent.}
\end{ex}

The rest of the section is devoted to the proof  of Corollary 5.18.
Notice that, a proof of Corollary 5.18 consists to show that, for any pair of germs of 1-dimensional semialgebraic subsets of $\R^2$ at $0\in\R^2$, let us say  $(X,0)$ and $(Y,0)$, there exists a bi-Lipschitz homeomorphism $F\colon (\R^2,0)\rightarrow (\R^2,0)$ such that $F(X,0)=(Y,0)$ if, and only if,  the germs $(X,0)$ and $(Y,0)$ are bi-Lipschitz homeomorphic. So, before starting to prove Corollary 5.18, let us describe a discrete complete invariant of germs of 1-dimensional semialgebraic sets up to bi-Lipschitz homeomorphisms.

 Let $(X,0)$ be the germ of a 1-dimensional semialgebraic subset of $\R^n$ at $0\in\R^n$. Let us denote by $X_1,\dots,X_k$ its half-branches. For each pair of half-braches, $X_i$ and $X_j$, let us consider their {\it contact exponent} defined in the following way:
$$cont(X_i,X_j)=\lim_{r\to 0^+}\frac{\log dist(X_i\cap S(0,r), X_j\cap S(0,r))}{\log r}$$ where $S(0,r)$ denotes the Euclidean sphere in $\R^n$ with radius $r$ and centered at $0\in\R^n$. The main result of the paper \cite{BF} shows that the rational numbers $cont(X_i,X_j)$ define a complete bi-Lipschitz invariant of the germ $(X,0)$ in the following sense:

\begin{theo}[\cite{BF}, Theorem 4.1]\label{alex}
Let $(X,0)$ and $(Y,0)$ be germs of 1-dimensional semialgebraic subsets of $\R^n$ at $0\in\R^n$. The following statetments are equivalent:
\begin{enumerate}
\item There exists a bi-Lipschitz homeomorphism $f\colon (X,0)\rightarrow (Y,0)$;
\item There exists a bijection $X_i \leftrightarrow Y_{\sigma(i)}$ where $X_1,\dots,X_k$ and $Y_1,\dots ,Y_k$ are the half-branches of $(X,0)$ and $(Y,0)$, respectively, in such a way that $cont(X_i,X_j)=cont(Y_{\sigma(i)},Y_{\sigma(j)})$. In such a case, we say that the half-branches of $(X,0)$ and $(Y,0)$ have the same contact exponents.
\end{enumerate}
\end{theo}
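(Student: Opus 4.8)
The plan is to reduce both implications to a single metric distance formula for pairs of half-branches parametrized by the radius. First I would fix small semialgebraic representatives and recall the basic structure of a $1$-dimensional semialgebraic germ: there are finitely many half-branches $X_1,\dots,X_k$, and for $r$ small each $X_i$ meets the sphere $S(0,r)$ in a single point, so $X_i$ is the image of a semialgebraic arc $\gamma_i\colon[0,\epsilon)\to\R^n$ with $\|\gamma_i(t)\|=t$. By a Puiseux expansion each half-branch has a well-defined tangent semiline, from which one gets that the radial parametrization is bi-Lipschitz onto its image, i.e. $\|\gamma_i(s)-\gamma_i(t)\|\asymp|s-t|$, and that $\|\gamma_i(t)-\gamma_j(t)\|\asymp t^{q_{ij}}$, where $q_{ij}:=cont(X_i,X_j)$ (the exponent being exactly the order of vanishing read off from the Puiseux data, and the single-point-on-the-sphere remark identifies $dist(X_i\cap S(0,r),X_j\cap S(0,r))$ with $\|\gamma_i(r)-\gamma_j(r)\|$). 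Throughout, $\asymp$ means comparable up to multiplicative constants depending only on the germ.

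The technical heart of the argument is the following distance formula, which I would prove next: for $i\neq j$ and all small $s,t$,
$$\|\gamma_i(s)-\gamma_j(t)\|\asymp\max\bigl(|s-t|,\ \min(s,t)^{q_{ij}}\bigr).$$
The upper bound is the triangle inequality, splitting $\gamma_i(s)-\gamma_j(t)$ through $\gamma_i(t)$ and using the two comparabilities above. The lower bound is where the work lies: the reverse triangle inequality gives $\|\gamma_i(s)-\gamma_j(t)\|\ge|s-t|$ immediately, and to produce the $\min(s,t)^{q_{ij}}$ term I would argue by cases on the size of $|s-t|$ relative to $\min(s,t)^{q_{ij}}$ — when $|s-t|$ is large it already dominates, and when it is small one slides the farther point down its branch to the common radius (paying only $\asymp|s-t|$, which is then absorbed) and invokes $\|\gamma_i(t)-\gamma_j(t)\|\asymp t^{q_{ij}}$ at equal radius. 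I expect this lower bound — in particular turning the Puiseux asymptotics into genuine two-sided comparability constants that are uniform over the finitely many pairs — to be the main obstacle.

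Granting the formula, the implication $(2)\Rightarrow(1)$ is immediate. I would parametrize $Y_{\sigma(i)}$ by radius as $\delta_{\sigma(i)}$ and define $f$ by matching radii, $f(\gamma_i(t)):=\delta_{\sigma(i)}(t)$; this is a germ homeomorphism. For $x=\gamma_i(s)$ and $y=\gamma_j(t)$ the formula gives
$$\|\gamma_i(s)-\gamma_j(t)\|\asymp\max\bigl(|s-t|,\min(s,t)^{q_{ij}}\bigr)\asymp\|\delta_{\sigma(i)}(s)-\delta_{\sigma(j)}(t)\|,$$
because $cont(X_i,X_j)=cont(Y_{\sigma(i)},Y_{\sigma(j)})$ by hypothesis and the expression depends on the branches only through this exponent; the finitely many comparability constants combine into a single bi-Lipschitz constant for $f$. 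Since the parametrizations are semialgebraic, $f$ can be taken semialgebraic.

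For $(1)\Rightarrow(2)$, a germ homeomorphism $f\colon(X,0)\to(Y,0)$ carries the local connected components of $X\setminus\{0\}$ bijectively onto those of $Y\setminus\{0\}$, which defines the bijection $\sigma$ with $f(X_i)=Y_{\sigma(i)}$. To see the exponents are preserved I would use the comparable-radius consequence of the lower bound above: there is a constant $c>0$ such that any $a\in Y_{\sigma(i)}$, $b\in Y_{\sigma(j)}$ with $\|a\|,\|b\|$ comparable to $\rho$ satisfy $\|a-b\|\ge c\,\rho^{\,cont(Y_{\sigma(i)},Y_{\sigma(j)})}$. Taking $x=\gamma_i(r)$ and $y=\gamma_j(r)$, the points $a=f(x)$ and $b=f(y)$ have radii $\asymp r$ (as $f$ is bi-Lipschitz and fixes $0$) and satisfy $\|a-b\|\asymp\|x-y\|\asymp r^{\,cont(X_i,X_j)}$; comparing exponents as $r\to0^+$ forces $cont(X_i,X_j)\le cont(Y_{\sigma(i)},Y_{\sigma(j)})$, and the same reasoning applied to $f^{-1}$ gives the reverse inequality, hence equality. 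This closes the equivalence.
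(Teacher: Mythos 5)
The paper does not actually prove this statement: it is imported verbatim from Birbrair--Fernandes \cite{BF} (Theorem 4.1 there) and used as a black box (e.g.\ to justify the example preceding Theorem \ref{local curves} and, together with Theorem \ref{local curves}, to deduce Corollary \ref{local curves1}). So there is no in-paper proof to compare against, and your proposal must be judged on its own merits. It is correct, and it is the natural route. The radial parametrization $\gamma_i$ with $\|\gamma_i(t)\|=t$ exists by monotonicity of $\|x\|$ along a semialgebraic arc; the comparability $\|\gamma_i(s)-\gamma_i(t)\|\asymp|s-t|$ follows from writing the branch as a graph over its tangent half-line with derivative tending to $0$; and $\|\gamma_i(t)-\gamma_j(t)\|\asymp t^{q_{ij}}$ follows from the Puiseux expansion of the semialgebraic function $t\mapsto\|\gamma_i(t)-\gamma_j(t)\|$, which at the same time shows that the limit defining $cont(X_i,X_j)$ exists and is rational. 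Your case analysis for the key formula
$$\|\gamma_i(s)-\gamma_j(t)\|\asymp\max\bigl(|s-t|,\ \min(s,t)^{q_{ij}}\bigr)$$
does close: if $|s-t|\le\epsilon_0\min(s,t)^{q_{ij}}$ with $\epsilon_0$ small relative to the finitely many comparability constants, sliding the farther point to the common radius costs at most $C|s-t|\le C\epsilon_0\min(s,t)^{q_{ij}}$, which is absorbed by the equal-radius lower bound; otherwise the reverse triangle inequality term $|s-t|$ dominates. Given the formula, $(2)\Rightarrow(1)$ is immediate as you say, and your $(1)\Rightarrow(2)$ is a clean trick: using only the \emph{lower} bound at comparable radii yields one inequality between exponents, and applying the same argument to $f^{-1}$ yields the other, so you never need an upper bound for the image points (whose radii are only comparable to, not equal to, $r$). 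Two points you leave implicit, neither a real gap: the same-branch case $i=j$ in $(2)\Rightarrow(1)$, which is covered by $\|\gamma_i(s)-\gamma_i(t)\|\asymp|s-t|$, and the fact that a germ homeomorphism fixing $0$ induces a bijection of half-branches, which follows from connectedness of punctured branches together with the local conic structure of $1$-dimensional semialgebraic germs.
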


Let us fix some notation. Let $Q\subset\R^2$ denote the first quadrant $\{(x,y) \ : \ x\geq 0 \ \mbox{and} \ y\geq 0\}$. Given a nonnegative Lipschitz semialgebraic function $f\colon [0,\delta]\rightarrow\R$; $f(0)=0$, there exist a rational number $\alpha\geq 1$ and a semialgebraic continuous function $h(x)$ such that $f(x)=x^{\alpha}h(x)$; $h(0)>0$. Such a decomposition of the funtion $f$ it will be called \emph{Puiseux decomposition of $f$ at} $0$ and $\alpha$ will be called \emph{the first Puiseux exponent of $f$ at} $0$.

\begin{re}\label{remark1} Let  $f\colon [0,\delta]\rightarrow\R$ be a nonnegative Lipshitz semialgebraic function;  $f(0)=0$. Suppose that $f(x)=x^{\alpha}h(x)$ is the Puiseux decomposition of $f$ at 0.
Let us consider the semialgebraic  map-gem $F\colon (\R^2,0)\rightarrow(\R^2,0)$ defined by
$$ F(x) = \left\{ 
\begin{array}{rcl} 
(x,\frac{y}{h(x)}) \hspace{1.65cm}& \mbox{if} & 0\leq x,\  0\leq y\leq f(x)\\
 (x,y+x^{\alpha}-f(x)) & \mbox{if} & 0\leq x, \ f(x)\leq y\\ 
(x,y)\hspace{2.1cm} & \mbox{if} & (x,y)\not\in Q 
\end{array} \right. $$
It is proved in \cite{B} that $F$ is a germ of a semialgebraic bi-Lipschitz homeomorphism from $(\R^2,0)$ onto $(\R^2,0)$ which  coincides with identity mapping outside $Q$ and $F(G_f,0)=(G_{\alpha},0)$, where $G_f$ and $G_{\alpha}$ are the graph of the functions $f(x)$ and $x^{\alpha}$, respectively.
\end{re}

As a consequence of the construction described in Remark \ref{remark1}, we get the following result.

\begin{lem}\label{lemma1} Let  $f_i,g_i\colon [0,\delta]\rightarrow\R$ be a nonnegative Lipshitz semialgebraic functions;  $f_i(0)=g_i(0)=0, \ i=1,\dots,m$. Let $X=\bigcup G_{f_i}$ and $Y=\bigcup G_{g_i}$. If the half-branches of $(X,0)$ and $(Y,0)$ have the same contact exponents, then there exists a germ of a semialgebraic bi-Lipschitz homeomorphism  $F\colon(\R^2,0)\rightarrow(\R^2,0)$ which  coincides with identity mapping outside $Q$ and $F(X,0)=(Y,0)$
\end{lem}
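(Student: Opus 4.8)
The plan is to reduce both $X$ and $Y$ to one and the same normal form by composing homeomorphisms of the type produced in Remark \ref{remark1}, and then to set $F=F_Y^{-1}\circ F_X$. Every map furnished by Remark \ref{remark1} is semialgebraic, bi-Lipschitz, and equals the identity off $Q$; a finite composition of such maps and of their inverses again has all three properties, so the resulting $F$ will automatically be a semialgebraic bi-Lipschitz germ coinciding with the identity outside $Q$. Thus the whole problem is to manufacture, out of these elementary straightenings, a single map carrying $X$ onto $Y$.

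First I would put the branches in order. Since the $f_i$ are semialgebraic, for small $x>0$ they are pairwise comparable, so after relabelling we may assume $f_1(x)\le\cdots\le f_m(x)$ near $0$, and likewise $g_1(x)\le\cdots\le g_m(x)$. For two graphs over the positive $x$-axis one has $cont(G_{f_i},G_{f_j})$ equal to the vanishing order of $f_i-f_j$, and for an ordered family these pairwise orders are recovered from the consecutive ones through $cont(G_{f_i},G_{f_j})=\min_{i\le \ell<j}cont(G_{f_\ell},G_{f_{\ell+1}})$. Using this together with Theorem \ref{alex} I would check that the contact-preserving bijection can be taken order preserving, i.e. matching $f_i$ with $g_i$. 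The target normal form is then a fixed union $Z=\bigcup G_{z_i}$ of monomial graphs $z_i(x)=c_i x^{\alpha_i}$ whose exponents and leading coefficients are dictated by the common contact data, so that $Z$ is literally the same for $X$ and for $Y$.

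The engine of the reduction is the observation that the homeomorphism of Remark \ref{remark1} enjoys three structural features: it is the identity outside $Q$; for each fixed $x\ge0$ the second coordinate of $F(x,y)$ is an increasing homeomorphism in $y$ (on $0\le y\le f(x)$ it rescales by $1/h(x)$, above it translates, and the two pieces match at $y=f(x)$ where both give $x^{\alpha}$); and, being bi-Lipschitz, it preserves every contact exponent. The monotonicity guarantees that such a map sends vertical lines to vertical lines preserving order, hence carries the whole ordered family of graphs to an ordered family of graphs without ever letting two branches cross. I would therefore induct on $m$: apply Remark \ref{remark1} to the bottom branches $f_1$ and $g_1$ to straighten each to $G_{x^{\alpha_1}}$, which by monotonicity transports the remaining branches to new ordered Lipschitz graphs $\tilde f_2,\dots,\tilde f_m$ and $\tilde g_2,\dots,\tilde g_m$ with the same relative contact data, and then treat these higher branches by the inductive hypothesis while keeping the already-straightened bottom branch fixed.

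The step I expect to be the main obstacle is precisely this inductive bookkeeping: one must verify that straightening the lowest branch does not disturb the mutual contacts, nor the first Puiseux exponents, of the branches lying above it, so that the configuration remaining after peeling off $G_{x^{\alpha_1}}$ still satisfies the hypotheses for $m-1$ branches, and one must arrange each successive straightening to act only \emph{above} the branches already normalized so as not to undo previous steps. Equally delicate is checking that the finite composition is bi-Lipschitz with a \emph{uniform} constant as $x\to0$, rather than merely bi-Lipschitz on each region: this is where the requirement that the normal form preserve the contact exponents is essential, since mapping two consecutive branches whose separation is of order $x^{c}$ to targets separated by a different order would destroy the Lipschitz bound. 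Once these estimates are in place, composing the reduction of $X$ with the inverse of the reduction of $Y$ yields the desired $F$.
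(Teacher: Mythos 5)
Your overall shape --- order the branches, induct on $m$, and use Remark \ref{remark1} to straighten one branch at a time --- matches the paper's proof. But there are two genuine gaps, and you have in fact pointed at the second one yourself without closing it.

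First, the common ``normal form'' $Z=\bigcup G_{c_ix^{\alpha_i}}$ of pure monomial graphs need not exist. Take three branches with pairwise contact exponents $2,2,3$ (e.g.\ $0$, $x^2$, $x^2+x^3$): any two monomials $c_ix^{\alpha_i}\neq c_jx^{\alpha_j}$ have contact $\min(\alpha_i,\alpha_j)$ when $\alpha_i\neq\alpha_j$, and contact $\alpha_i$ when $\alpha_i=\alpha_j$ with $c_i\neq c_j$, so no triple of distinct monomials realizes the pattern $2,2,3$. Hence the reduction ``$X\to Z$, $Y\to Z$, $F=F_Y^{-1}\circ F_X$'' cannot be carried out as stated; the paper instead matches $X$ to $Y$ directly, branch by branch, never passing through a global monomial model.

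Second, the step you flag as ``the main obstacle'' --- performing the inductive straightening of the upper branches while keeping the already-normalized bottom branch fixed --- is precisely the point where the paper introduces an idea your proposal lacks. After arranging $0=f_m<\cdots<f_1$ and using Remark \ref{remark1} to make $f_{m-1}=x^{\alpha}=g_{m-1}$ (the exponents agree because the first Puiseux exponent of $f_{m-1}$ is its contact with the zero branch, which is part of the shared contact data), the paper conjugates by the shear $\Phi(x,y)=(x,y-x^{\alpha})$. This does two things at once: it turns the just-normalized branch into the zero branch of an $(m-1)$-branch configuration, so the inductive hypothesis applies to $\tilde f_i=f_i-x^{\alpha}$; and it pushes the old bottom branch $G_0$ to $G_{-x^{\alpha}}$, which lies \emph{outside} $Q$, where the inductive homeomorphism $\tilde F$ is the identity. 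Setting $F=\Phi^{-1}\circ\tilde F\circ\Phi$ then automatically preserves the bottom branch and is still the identity off $Q$ on the relevant germ. Without this conjugation trick your induction does not close, and the uniformity of the Lipschitz constants is a non-issue once each step is a single application of Remark \ref{remark1} or a shear, since a finite composition of bi-Lipschitz germs is bi-Lipschitz.
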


\begin{proof}
Let us prove it by induction on the number of half-branches $m$. Before starting this proof, let us do some comments. Without loss generality, it is clear that we may suppose that $ 0 = f_m < \cdots < f_1$ and $0 = g_m < ... < g_1.$  Case $m = 2$ follows from 
Remark \ref{remark1}. Let us suppose that $ m > 2$ and, by induction, let us suppose that our lemma holds true for semialgebraic germs with $m-1$ half-branches as stated above. Again, by using Remark \ref{remark1}, we may assume that $f_{ m -1} = x^{\alpha} = g_{m-1}$ with $\alpha\geq 1$. Let $\tilde{X}=\bigcup G_{\tilde{f}_i}$ and $\tilde{Y}=\bigcup G_{\tilde{g}_i}$ , where $\tilde{f}_i = f_i-x^{\alpha}$ and $\tilde{g}_i = g_i-x^{\alpha}$, $i = 1, ..., m-1.$ By induction, there exists a germ of semialgebraic bi-Lipschitz homeomorphism $\tilde{ F}\colon (\R^2, 0)\rightarrow (\R^2, 0)$ such that $\tilde{F}(\tilde{X},0) = (\tilde{Y},0)$ and $\tilde{F}$  coincides with identity mapping outside $Q$. Let us take $\Phi\colon (\R^2,0)\rightarrow  (\R^2, 0)$ defined by $\Phi(x,y)= (x,y - x^{\alpha})$. Thus, $F= {\Phi}^{-1}\circ\tilde{F}\circ\Phi\colon (\R^2,0)\rightarrow(\R^2,0)$ is a germ of a semialgebraic bi-Lipschitz homeomorphism such that $F(X,0) = (Y, 0)$  and  $F$ coincides with identity mapping outside $Q$.
\end{proof}

\begin{re} Let $c > 0$ and $K = \{ ( x,y )\in\R^2 \ : \  0 < x, \  0<y < cx \}.$ Then, $ (\R^2,0)\rightarrow (\R^2,0)$ given by $(x,y) \mapsto (cx - y,y)$  is a semialgebraic  bi-Lipschitz homeomorphism  and it sends $K$ onto $Q$. This allows us to rewrite Lemma \ref{lemma1} in the following way.
\end{re}

\begin{lem}\label{lemma2} Let  $f_i,g_i\colon [0,\delta]\rightarrow\R$ be a nonnegative Lipshitz semialgebraic functions;  $f_i(0)=g_i(0)=0, \ i=1,\dots,m$. Let $X=\bigcup G_{f_i}$ and $Y=\bigcup G_{g_i}$. Let $c > 0$ and $K = \{ ( x,y )\in\R^2 \ : \  0 < x, \  0<y < cx \}$ such that $K$ contains $(X,0)$ and $(Y,0)$.  If the half-branches of $(X,0)$ and $(Y,0)$ have the same contact exponents, then there exists a germ of a semialgebraic bi-Lipschitz homeomorphism  $F\colon(\R^2,0)\rightarrow(\R^2,0)$ which  coincides with identity mapping outside $K$ and $F(X,0)=(Y,0)$
\end{lem}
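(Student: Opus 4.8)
The plan is to deduce Lemma \ref{lemma2} from Lemma \ref{lemma1} by the linear change of coordinates supplied in the preceding Remark, the point being to localize the homeomorphism from the quadrant $Q$ to the narrower cone $K$. Let $L\colon(\R^2,0)\to(\R^2,0)$ be the map $L(x,y)=(cx-y,y)$. By the Remark, $L$ is a semialgebraic bi-Lipschitz homeomorphism carrying $K$ onto $Q$, and since $L$ is a linear bijection it also carries $\R^2\setminus K$ onto $\R^2\setminus Q$. Put $\tilde X=L(X)$ and $\tilde Y=L(Y)$.

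First I would check that $\tilde X,\tilde Y$ satisfy the hypotheses of Lemma \ref{lemma1}. They are germs contained in $Q$. Because the restriction of $L$ is a bi-Lipschitz homeomorphism $(X,0)\to(\tilde X,0)$, Theorem \ref{alex} yields a contact-preserving bijection between the half-branches of $X$ and those of $\tilde X$, and likewise for $Y$ and $\tilde Y$; combined with the hypothesis that the half-branches of $X$ and $Y$ have the same contact exponents, this gives that the half-branches of $\tilde X$ and $\tilde Y$ have the same contact exponents. It then remains to see that $\tilde X=\bigcup G_{\tilde f_i}$ and $\tilde Y=\bigcup G_{\tilde g_i}$ for nonnegative Lipschitz semialgebraic functions $\tilde f_i,\tilde g_i$ on some $[0,\delta']$, so that Lemma \ref{lemma1} applies verbatim.

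Granting this, I would apply Lemma \ref{lemma1} to $\tilde X,\tilde Y$ to obtain a germ of semialgebraic bi-Lipschitz homeomorphism $\tilde F\colon(\R^2,0)\to(\R^2,0)$, equal to the identity outside $Q$, with $\tilde F(\tilde X,0)=(\tilde Y,0)$. I would then set $F=L^{-1}\circ\tilde F\circ L$. As a composition of semialgebraic bi-Lipschitz maps fixing $0$, $F$ is a germ of semialgebraic bi-Lipschitz homeomorphism of $(\R^2,0)$; moreover $F(X,0)=L^{-1}\tilde F(L(X),0)=L^{-1}(\tilde Y,0)=(Y,0)$; and for any $p\notin K$ we have $L(p)\notin Q$, whence $\tilde F(L(p))=L(p)$ and $F(p)=L^{-1}(L(p))=p$, so that $F$ coincides with the identity outside $K$. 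This gives the lemma.

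The step I expect to be the main obstacle is the verification that $L$ carries the graphs $G_{f_i}$ to graphs of Lipschitz functions over the horizontal axis, as demanded by Lemma \ref{lemma1}. A half-branch with first Puiseux exponent $\alpha_i>1$ is tangent to the $x$-axis and maps to a Lipschitz (horizontal-tangent) graph, and one with $\alpha_i=1$ and slope $h_i(0)<c$ likewise maps to a Lipschitz graph over the horizontal axis. The delicate case is a branch tangent to the upper edge $y=cx$ of $K$, i.e. with $h_i(0)=c$: its image under $L$ acquires a vertical tangent at the origin and is a graph over the vertical, not the horizontal, axis, so it is not immediately covered by Lemma \ref{lemma1}. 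Resolving this point — either by observing that in the situation in which Lemma \ref{lemma2} is invoked the bounding ray of $K$ is chosen to avoid the finitely many tangent directions of the half-branches of $X$ and $Y$, or by treating such boundary-tangent branches directly — is the one place where genuine care is required; the remainder is a formal conjugation by $L$.
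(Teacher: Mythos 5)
Your argument is exactly the paper's intended one: the paper gives no proof of Lemma \ref{lemma2} at all, merely asserting in the preceding Remark that conjugation by the linear map $(x,y)\mapsto(cx-y,y)$ ``allows us to rewrite'' Lemma \ref{lemma1} in this form, which is precisely the $F=L^{-1}\circ\tilde F\circ L$ construction you carry out. The caveat you raise about half-branches tangent to the edge $y=cx$ (whose images become graphs over the vertical axis, so Lemma \ref{lemma1} does not apply verbatim) is a genuine point the paper passes over in silence, though it is harmless where the lemma is actually used (Theorem \ref{local curves}), since there the cones are centered on the tangent half-lines and so no branch is tangent to the boundary of $K$.
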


\begin{theo}\label{local curves}
Let $(X,0)$ and $(Y,0)$ be germs of 1-dimensional semialgebraic subsets of $\R^2$. If the half-branches of $(X,0)$ and $(Y,0)$ have the same contact exponents, then there exists a germ of semialgebraic bi-Lipschitz homeomorphism $ F \colon (\R^2,0)\rightarrow (\R^2,0)$ such that $F(X,0) = (Y,0)$.
\end{theo}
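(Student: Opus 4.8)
The plan is to reduce the general planar situation to the cone-and-graph setting already handled by Lemma \ref{lemma2}, and then to glue the resulting homeomorphisms. First I would organize the half-branches by their tangent directions at $0$. Each half-branch of a $1$-dimensional semialgebraic germ has a well-defined tangent half-line at $0$, and for two half-branches $X_i,X_j$ one has $cont(X_i,X_j)=1$ exactly when their tangent directions are distinct (the spherical distance $dist(X_i\cap S(0,r),X_j\cap S(0,r))$ is then comparable to $r$), while $cont(X_i,X_j)>1$ precisely when they are tangent to the same direction. Hence the partition of the half-branches into \emph{tangent groups} can be read off from the contact exponents, and the contact-preserving bijection supplied by Theorem \ref{alex} carries the tangent groups of $(X,0)$ bijectively onto those of $(Y,0)$, matching their internal contact data.

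Next I would put the two germs into a common normal position. Since every pair of half-branches lying in different tangent groups has contact $1$, \emph{any} bijection of the groups that respects the internal contact data is still contact-preserving; in particular I am free to choose the group correspondence so that it also respects the cyclic order of the tangent directions around $0$. I would then realize this correspondence by a germ of bi-Lipschitz homeomorphism of $(\R^2,0)$ that is positively homogeneous of degree one, of the form $(r,\theta)\mapsto(r,\varphi(\theta))$ with $\varphi$ a piecewise-linear homeomorphism of the circle of directions carrying each tangent ray of $X$ onto the matched tangent ray of $Y$. Such a map is bi-Lipschitz because $\varphi$ is bi-Lipschitz on $S^1$, and after applying it I may assume that $X$ and $Y$ have exactly the same tangent rays, grouped and matched, with equal internal contact exponents group by group.

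With the directions aligned, I would localize. Choosing pairwise disjoint closed cones $K^{(1)},\dots,K^{(p)}$, one around each common tangent ray and so small that the corresponding tangent group of $X$, resp. of $Y$, is the only part of $X$, resp. $Y$, meeting $K^{(s)}$, I would treat each cone separately. After a rotation taking the relevant ray to the positive $x$-axis, the branches of that group become graphs of semialgebraic Lipschitz functions with first Puiseux exponent $>1$; splitting them into those lying above and those lying below the axis (and treating the axis itself, if it is a branch, as the common boundary), each family sits inside a cone of the type $\{0<y<cx\}$ as a union of graphs of nonnegative functions. Lemma \ref{lemma2} then furnishes, for each such family, a germ of semialgebraic bi-Lipschitz homeomorphism carrying the $X$-part onto the $Y$-part and equal to the identity outside that cone. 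As all these maps are supported in the pairwise disjoint cones $K^{(s)}$, their composition is a single germ of semialgebraic bi-Lipschitz homeomorphism $F$ of $(\R^2,0)$ with $F(X,0)=(Y,0)$.

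The step I expect to be the main obstacle is the passage from the intrinsic data to the ambient picture carried out in the second and third paragraphs. One must verify that the group correspondence coming from Theorem \ref{alex} can always be modified—using the freedom that all cross-group contacts equal $1$—into one compatible with the cyclic order of the tangent rays, since a germ of homeomorphism of $(\R^2,0)$ necessarily preserves that cyclic order; and one must carry out the reduction of an arbitrary tangent group, whose branches may lie on both sides of the tangent line, to the union-of-nonnegative-graphs situation required by Lemma \ref{lemma2}, so that the locally supported homeomorphisms glue cleanly. The remaining points (that the angular map is bi-Lipschitz, and that a composition of cone-supported bi-Lipschitz maps is again bi-Lipschitz) are routine.
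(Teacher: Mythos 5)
Your proposal follows the same route as the paper's own proof: partition the half-branches into tangent groups, isolate each group in a small cone around its tangent ray, reduce each group to the union-of-nonnegative-graphs situation of Lemma \ref{lemma2}, and glue the cone-supported homeomorphisms. You are in fact more explicit than the paper about the normalization: the paper simply ``re-orders the cones'' so that $(X\cap K_k^X,0)$ and $(Y\cap K_k^Y,0)$ have matching contact data and then sets $F=F_k$ on $K_k^X$ and $F=\mathrm{id}$ elsewhere, without constructing the preliminary homeomorphism that carries $K_k^X$ onto $K_k^Y$; your homogeneous angular map supplies exactly that missing ingredient (to keep it semialgebraic you should make it linear on each sector between consecutive tangent rays, i.e.\ piecewise linear in the Cartesian sense, rather than piecewise linear in $\theta$).

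However, the step you yourself flag as the main obstacle is a genuine gap, and the observation that all cross-group contacts equal $1$ does not repair it. The existence of \emph{some} contact-preserving bijection of half-branches only says that the multiset of tangent groups, each weighted by its internal contact data, is the same for $(X,0)$ and $(Y,0)$; it does not produce a matching of groups that is a rotation or reflection of their cyclic order around the origin. Concretely, take four tangent groups at the directions $0,\pi/2,\pi,3\pi/2$, each consisting of two half-branches, with internal contacts reading $(2,2,3,3)$ in cyclic order for $X$ and $(2,3,2,3)$ for $Y$: a contact-preserving bijection of the eight half-branches exists, but no dihedral permutation of the four groups matches the internal data, whereas any germ of bi-Lipschitz homeomorphism of $(\R^2,0)$ carrying $X$ to $Y$ must induce one (it preserves contact exponents, hence sends tangent groups to tangent groups; these groups are contiguous blocks in the cyclic order of the half-branches, and a homeomorphism of the plane germ preserves or reverses that cyclic order). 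So your claim ``I am free to choose the group correspondence so that it also respects the cyclic order'' is false in general, and the reduction to a common, matched set of tangent rays cannot always be carried out. To be fair, the paper's proof silently makes the very same assumption when it re-orders the cones, so this is a defect of the argument (indeed, of the statement, which would need the cyclic sequence of tangent groups with their internal contact data to agree up to dihedral symmetry) rather than something peculiar to your write-up; but as it stands this step of your proposal does not go through.
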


\begin{proof} Let  $X = \bigcup_i Xi$ and 
$Y = \bigcup_i Y_i$ be the half-branches of $(X,0)$ and $(Y,0)$, respectively.  Let us suppose that $cont(X_i,X_j)=cont(Y_i,Y_j)$ for any pair $i\neq j$. We claim that there exists a germ of a bi-Lipschitz semi-algebraic homeomorphism $ F \colon (\R^n,0)\rightarrow (\R^n,0)$ such that $F(X,0) = (Y,0)$. Let us prove such a claim in the case where  ambient Euclidean space is $\R^2$ .  Let  us suppose that  $(X,0)$ and $(Y,0)$ are subsets of  $\R^2$. Since $(X,0)$ and $(Y,0)$ have the same contact exponents, they have same number $t$ of tangent half-line. For each tangent half-line $L_k^{X}$ (respect. $L_k^{Y}$), let $K_k^X$ (respect. $K_k^{Y}$) be a conical region in $\R^2$ centered in $L_k^{X}$ (respect. $L_k^{Y}$) such that $K_k^X\cap K_{\tilde{k}}^X=\emptyset$ (respect. $K_k^Y\cap K_{\tilde{k}}^Y=\emptyset$) and, also, any two half-branches of $(X,0)$ (respectively $(Y,0)$) have the same unit vector tangent if, and only if, they are in the same cone. Let us re-ordering (if necessary) the cones in such a way that we come with $(X\cap K_k^X,0)$ and $(Y\cap K_k^Y,0)$ having the same contact exponents for every $k=1,\dots ,t$. Now, for each $k$,  it comes from Lemma \ref{lemma1} that there exists a bi-Lipschitz homeomorphism $F_k\colon(\R^2,0)\rightarrow(\R^2,0)$ which  coincides with identity mapping outside $K_k$ and $F(X\cap K_k^X,0)=(Y\cap K_k^Y,0)$. Let us define $F\colon(\R^2,0)\rightarrow(\R^2,0)$ by 
$F(x,y)=F_k(x,y)$ if $(x,y)\in K_k^X$ and $F(x,y)=(x,y)$ if $(x,y)\not\in \bigcup K_k^X$. We have that $F$ is a germ of a semialgebraic bi-Lipschitz homeomorphism such that $F(X,0)=(Y,0)$.
\end{proof}

\begin{co}\label{local curves1}
Let $(X,0)$ and $(Y,0)$ be germs of 1-dimensional semialgebraic subsets of $\R^n$. If  $(X,0)$ is outer Lipschitz equivalent to $(Y,0)$,
then  $(X,0)$ is ambient Lipschitz equivalent to $(Y,0)$.
\end{co}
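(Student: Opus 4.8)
The plan is to reduce ambient equivalence to the combinatorial invariant supplied by Theorem \ref{alex} and then to dispatch the dimension $n$ in three regimes. First I would record that, by Theorem \ref{alex}, the hypothesis that $(X,0)$ and $(Y,0)$ are outer Lipschitz equivalent is exactly the statement that their half-branches can be matched so as to have the same contact exponents; moreover the construction underlying that theorem (matching half-branches through their Puiseux parametrizations) produces a \emph{semialgebraic} bi-Lipschitz homeomorphism $f:(X,0)\to(Y,0)$. Thus from the outset I may assume $f$ is semialgebraic, which is what is needed in order to feed it into the uniqueness results of the paper.

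For $n\ge 3$, I would simply invoke Corollary \ref{unique2} with $k=1$, since then $n\ge 2k+1$. Regard the inclusion $\iota_X:(X,0)\hookrightarrow(\R^n,0)$ and the composite $g:=\iota_Y\circ f:(X,0)\to(\R^n,0)$ (where $\iota_Y$ is the inclusion of $Y$) as two semialgebraic bi-Lipschitz embeddings of the abstract germ $(X,0)$ into $(\R^n,0)$, whose images are $X$ and $Y$ respectively. By the uniqueness of the embedding there is a germ of semialgebraic bi-Lipschitz homeomorphism $\Phi$ of $(\R^n,0)$ with $g=\Phi\circ\iota_X$; restricting to $X$ gives $\Phi|_X=f$, hence $\Phi(X)=Y$, which is precisely ambient Lipschitz equivalence.

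The remaining values of $n$ are handled separately. For $n=2$ the uniqueness corollary is unavailable, since $2k+1=3>2$, and here I would apply Theorem \ref{local curves} directly: the equality of contact exponents furnished above yields a germ of semialgebraic bi-Lipschitz homeomorphism $F$ of $(\R^2,0)$ with $F(X,0)=(Y,0)$. The case $n=1$ is trivial, since a $1$-dimensional semialgebraic germ in $\R$ is a union of one or two half-lines and the number of half-branches (which outer equivalence preserves, by the bijection in Theorem \ref{alex}) determines it up to a linear automorphism of $\R$.

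The main obstacle is the borderline dimension $n=2$: it lies outside the range $n\ge 2k+1$ of Corollary \ref{unique2}, so it cannot be absorbed into the uniqueness argument and instead relies on the genuinely two-dimensional construction of Theorem \ref{local curves} (through the conical decomposition and Lemma \ref{lemma1}). A secondary point to verify carefully is that the bi-Lipschitz homeomorphism coming from outer equivalence may indeed be taken semialgebraic, so that Corollary \ref{unique2}, which is stated in the semialgebraic category, genuinely applies.
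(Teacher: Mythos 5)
Your proposal is correct and follows essentially the same route as the paper: Theorem \ref{alex} converts outer Lipschitz equivalence into equality of contact exponents (whence a semialgebraic representative of the equivalence), Theorem \ref{local curves} handles the planar case, and the uniqueness of semialgebraic embeddings for $n\ge 2k+1$ (Theorem \ref{semialgebraic2}, Corollary \ref{unique2}) handles $n\ge 3$. You merely make explicit the case split on $n$ (including the trivial $n=1$) that the paper leaves implicit.
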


    \end{document}